\documentclass[11pt,reqno, a4paper]{article}

\usepackage[LCY,T1]{fontenc}
\usepackage[english]{babel}
\usepackage[utf8]{inputenc}

\usepackage{amsmath, amsthm, amssymb}
\usepackage{mathtools}
\usepackage{thmtools}
\usepackage{comment}
\usepackage{enumitem}
\usepackage{booktabs}
\usepackage{xfrac}
\usepackage[a4paper, left=2.5cm, right=2.5cm, top=2.5cm, bottom=3cm, footskip=1.6cm]{geometry}
\numberwithin{equation}{section}

\usepackage{hyperref}

\newtheorem{theorem}{Theorem}[section]
\newtheorem{lemma}[theorem]{Lemma}
\newtheorem{corollary}[theorem]{Corollary}
\newtheorem{proposition}[theorem]{Proposition}
\newtheorem{remark}[theorem]{Remark}

\newcommand{\N}{\mathbb{N}}

\newcommand{\Q}{\mathbb{Q}}
\newcommand{\R}{\mathbb{R}}

\newcommand{\E}{\mathbb{E}}
\newcommand{\T}{\mathbb{T}}
\newcommand{\Id}{\operatorname{Id}}
\newcommand{\Prob}{\mathbb{P}}

\newcommand{\ee}{\mathrm{e}}

\DeclareMathOperator{\tr}{tr}

\newcommand{\dint}{\,\mathrm{d}}

\newcommand{\cH}{\mathcal{H}}

\DeclareMathOperator{\rank}{rank}
\DeclareMathOperator{\spann}{span}

\DeclareMathOperator*{\bigtimes}{\text{\Large$\times$}}



\begin{document}
	%%%%%%%%%%%%%%%%%%%%%%%%%%%%%%%%%%%%%%%%%%%%%%%%%%%%%%%%%%%%%%%%%%%%%%%%%%%%%%%
	
	\title{Random nodes with positive weights: Richter–Tchakaloff rules and optimal
		positive quadrature}

	\author{Sebastian Neumayer,
		%$\!{}^{a}$,
		Martin Sch\"afer\footnote{Corresponding author, Email:
			martin.schaefer@math.tu-chemnitz.de},
		%$\!{}^{a}$,
		Tino Ullrich
		%$\!{}^{a,}$$\!{}^{a,}$\footnote{Corresponding author, Email:tino.ullrich@math.tu-chemnitz.de}
		\\\\
		%${}^{a}\!$ 
		Chemnitz University of Technology, Faculty of Mathematics\\[2mm]
	}
	%%%%%%%%%%%%%%%%%%%%%%%%%%%%%%%%%%%%%%%%%%%%%%%%%%%%%%%%%%%%%%%%%%%%%%%%%%%%%%%%
	%%%%%
	%%%%%%%%%%%%%%%%%%%%%%%%%%%%%%%%%%%%%%%%%%%%%%%%%%%%%%%%%%%%%%%%%%%%%%%%%%%%%%%%
	%%%%%
	
	\date{\today}
	
	\maketitle
	
	\begin{abstract}
		\noindent
		We show that independent random points support exact quadrature rules with positive weights.
		For any $n$-dimensional space $V$ of integrable functions a constant multiple of $n\log n$ points sampled from a suitable density yields, with high probability, positive weights which integrate the functions in the space exactly.
		The weights stay close to the natural importance sampling weights according to the Lewis density with respect to $V$. They are the result of a weight adaptation procedure which requires an explicit  $L_1$-norm discretization result that is proved on the way. As an application, we study numerical integration in reproducing kernel Hilbert spaces.
		Combining the random rules with a compression/subsampling step, we show that positive quadrature rules with constantly oversampled nodes achieve the best possible worst-case error bound in terms of the singular numbers of the embedding into $L_2$.
	\end{abstract}
	
	\noindent
	{\bf Keywords:}  Positive quadrature, Richter-Tchakaloff quadrature rules, Lewis density, reproducing kernel Hilbert space, random nodes, weight adaptation.\\
    {\bf MSC2020 Classification:} 
	Primary 65D32; Secondary 65D30, 46E22, 65C05, 41A46, 94A20.

	\section{Introduction}
	%\label{sec:intro}
	Numerical integration is one of the most basic tasks in numerical analysis. Quadrature rules with non-negative weights are the preferred choice in many applications.
	They provide numerical stability and preserve positivity of the integrand. Furthermore, if $\mu$ is a probability measure and the rule integrates constants exactly, their weights sum up to one, i.e., the rule itself is a discrete probability measure approximating~$\mu$.
	
	A classical design principle for quadrature rules is exactness on a finite-dimensional space $V$, as for Gaussian rules and algebraic polynomials, spherical designs and spherical polynomials, or rules for trigonometric polynomials with frequencies in a hyperbolic cross.
	Such a rule inherits good error bounds for all functions that are well approximated from $V$, for example, provided that its weights are under control.
	Richter's theorem, see~\cite{Richter57}, which is a generalization of Tchakaloff's theorem, see~\cite{Tch57}, guarantees that positive exact rules exist, but its proof is non-constructive, and it says nothing about the error outside of $V$.
	
	In this paper, we provide a construction.
	First, we show that i.i.d.\ random nodes, drawn from a suitable density, lead to positive quadrature rules that are exact on a given $n$-dimensional space $V$ as soon as their number of nodes exceeds a constant multiple of $n\log n$. We combine a duality argument going back to Mhaskar, Narcowich and Ward \cite{MhNaWa01}
with a new and sharp general $L_1$-norm discretization result for i.i.d.\ random nodes, see Remark~\ref{rem:discretization}, which
simultaneously controls the quadrature error.
This allows us to adapt the importance sampling weights such that the resulting rule is
exact on $V$ while its weights stay within a box of fixed relative size.

	Second, we use these rules obtained from a random draw for worst-case integration in reproducing kernel Hilbert spaces and show that positive rules achieve the universally optimal error bound in terms of the singular numbers of the embedding into $L_2$ after a further deterministic compression/subsampling applied to the initial node set. In this universal sense, the restriction to positive weights does not cost anything.
	
	\subsection{Main results}
	
	\paragraph{Random Richter-Tchakaloff rules.}
	Let $(\Omega,\Sigma,\mu)$ be a measure space and $V\subset L_1(\Omega,\mu)$ a space of real-valued functions with $\dim V=n$.
	Throughout, given nodes $X\coloneqq (x_1,\ldots,x_m)\in\Omega^m$ and non-negative weights $\mathbf{w}\coloneqq (w_1,\ldots,w_m)$, we write for $f\in V$
	\begin{align}
		\label{eqdef:Q_{mathbf{w}}(X)}
		Q_{\mathbf{w}}(X)f \coloneqq  \sum_{i=1}^{m} w_i f(x_i).
	\end{align}	
	The positive and exact quadrature results in the literature are mostly non-constructive. Hence, they do not provide a practical method for finding exact positive quadrature rules for $V$.
	Due to the prominence of Richter's theorem and Tchakaloff's theorem in the literature, we will henceforth call those quadrature rules Richter-Tchakaloff rules for $V$.
	A notable example with additional structure are spherical $t$-designs, i.e., rules with equal weights on the sphere $\mathbb{S}^d$ which are exact for all spherical polynomials of degree at most $t$.
	By a fundamental result of Bondarenko, Radchenko and Viazovska~\cite{BoRaVi13}, they exist with $\asymp_d t^d$ nodes, which is the optimal order, but again non-constructively.
    
	Our first result is a simple random construction of Richter-Tchakaloff rules.
	It is based on the $L_1$-Lewis density $\varrho_{\rm L}$ of $V$, see~\cite[Thm.\ 1]{Lewis78} and Proposition~\ref{prop:lewis} below, a probability density which plays the role of the Christoffel density $\varrho_{\rm C}$, see Remark~\ref{rem:2.3}, in $L_1$.
	
	\begin{theorem}
		\label{thm:intro_RT}
		Let $V\subset L_1(\Omega,\mu)$ be a space of real-valued functions with $\dim V=n$, let $\varrho$ be a probability density with $\varrho_{\rm L}\le\gamma\varrho$ for some $\gamma\ge1$, and let $\delta\in(0,1)$.
		If
		\begin{equation}
			\label{eq:number_of_random_nodes}
			m\ge 134 \gamma\, n\log\Bigl(\frac{4n}{\delta}\Bigr)
		\end{equation}
		nodes $Y=(y_1,\ldots,y_m)$ are drawn independently according to $\varrho\mu$, then, with probability at least $1-\delta$, there exist weights $\mathbf w=(w_1,\ldots,w_m)$ with
		\begin{equation*}
			%\label{eq:intro_weights}
			\frac{1}{2m\varrho(y_i)}\le w_i\le\frac{3}{2m\varrho(y_i)},\qquad i=1,\ldots,m,
		\end{equation*}
		such that
		\begin{equation*}
			%\label{eq:intro_exact}
			\sum_{i=1}^m w_i f(y_i)=\int_\Omega f\dint\mu\qquad\text{for all }f\in V.
		\end{equation*}
		If, in addition, $\mathbf{1}_\Omega\in V$ and $\mu(\Omega)=1$, then $\sum_{i=1}^m w_i=1$.
	\end{theorem}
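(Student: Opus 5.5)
The plan is to write the weights as $w_i=\frac{1+\eta_i}{m\varrho(y_i)}$ with $|\eta_i|\le\frac12$, and to find the correction vector $\eta$ by a duality (Hahn--Banach/Farkas) argument in the spirit of Mhaskar--Narcowich--Ward. Define the sampling operator $S\colon V\to\R^m$, $Sf=\bigl(f(y_i)/(m\varrho(y_i))\bigr)_{i}$, and the functional $L(f)=\int_\Omega f\dint\mu-\sum_i f(y_i)/(m\varrho(y_i))$, which is the error of the importance sampling rule. We need $\eta\in\R^m$ with $\|\eta\|_\infty\le\frac12$ and $\langle \eta,Sf\rangle=L(f)$ for all $f\in V$. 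By duality ($\ell_\infty$ is the dual of $\ell_1$), such an $\eta$ exists as soon as $S$ is injective and
\[
|L(f)|\le \tfrac12\|Sf\|_{\ell_1}=\tfrac12\sum_{i=1}^m\frac{|f(y_i)|}{m\varrho(y_i)}\qquad\text{for all }f\in V .
\]
Indeed, the functional $Sf\mapsto L(f)$ on $S(V)\subset\ell_1^m$ then has norm at most $\frac12$, and a Hahn--Banach extension is represented by some $\eta$ with $\|\eta\|_\infty\le\frac12$.

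It therefore suffices to prove, with probability at least $1-\delta$, the two-sided bound
\[
\Bigl|\sum_i\frac{|f(y_i)|}{m\varrho(y_i)}-\|f\|_{L_1(\mu)}\Bigr|\le\varepsilon\|f\|_{L_1}\quad\text{and}\quad |L(f)|\le\varepsilon\|f\|_{L_1}\qquad\text{for all }f\in V,
\]
with, say, $\varepsilon=1/5$. Then $|L(f)|\le\varepsilon\|f\|_{L_1}\le\frac{\varepsilon}{1-\varepsilon}\|Sf\|_{\ell_1}\le\frac12\|Sf\|_{\ell_1}$, and injectivity of $S$ also follows. This is the $L_1$ discretization step mentioned in Remark~\ref{rem:discretization}. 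I would use the Lewis basis from Proposition~\ref{prop:lewis}. There is an $L_2(\varrho_{\rm L}\mu)$-orthonormal basis $u_1,\dots,u_n$ of $\{f/\varrho_{\rm L}: f\in V\}$ with $\sum_j u_j^2\equiv n$, and for all $f=\varrho_{\rm L}\sum_j a_j u_j\in V$ one has $\|f\|_{L_1}\le\|a\|_2$ and $|f(x)|\le\sqrt n\,\varrho_{\rm L}(x)\|a\|_2$. The second inequality, together with $\varrho_{\rm L}\le\gamma\varrho$, gives the pointwise bound $|f(y)|/\varrho(y)\le\gamma\sqrt n\|a\|_2$.

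For the linear functional $L$, I would bound it through a matrix Bernstein inequality for the vector sum $\sum_i\bigl(\varrho_{\rm L}(y_i)u(y_i)/(m\varrho(y_i))\bigr)-\int\varrho_{\rm L}u\dint\mu$. Its summands are bounded by $\gamma\sqrt n/m$ and have variance of order $\gamma/m$, so the error is at most $\varepsilon\|a\|_2$ once $m\gtrsim\gamma n\log(n/\delta)$ (even better in fact). The nonlinear quantity $\sum_i|f(y_i)|/(m\varrho(y_i))$ is the main obstacle, and there are two routes. The first is to bound it through Rademacher symmetrization and Talagrand's contraction, reducing to the operator-norm deviation of a random matrix. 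The second, which I would try first, is to pass through $L_2$. Write $|f|/\varrho=\bigl(|f|^{1/2}\varrho_{\rm L}^{-1/2}\bigr)^2\varrho_{\rm L}/\varrho$ and apply matrix Chernoff to the $n\times n$ matrix $\frac1m\sum_i\frac{\varrho_{\rm L}(y_i)}{\varrho(y_i)}u(y_i)u(y_i)^\top$. Its expectation is the identity and its summands have norm at most $\gamma n/m$, so matrix Chernoff gives that its eigenvalues lie in $[1-\varepsilon',1+\varepsilon']$ when $m\ge C\gamma n\log(n/\delta)/\varepsilon'^2$. This explains the constant $134$ and the $\log(4n/\delta)$, with a union bound over two events each of probability $\delta/2$. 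From this $L_2$ equivalence I would derive the $L_1$ statement using the Lewis change of density, $\|f\|_{L_1}=\int|a^\top u|\varrho_{\rm L}\dint\mu$. I expect the comparison between $\|a^\top u\|_{L_1}$ and $\|a^\top u\|_{L_2}$, via the Lewis extremal property, to be the delicate point, and if it fails I would fall back on the symmetrization route.

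Finally, the weight bounds $\frac{1}{2m\varrho(y_i)}\le w_i\le\frac{3}{2m\varrho(y_i)}$ follow directly from $|\eta_i|\le\frac12$. If $\mathbf 1_\Omega\in V$ and $\mu(\Omega)=1$, exactness applied to $f=\mathbf 1_\Omega$ gives $\sum_i w_i=1$.
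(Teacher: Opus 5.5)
\textbf{There is a genuine gap in the probabilistic core.}

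Your duality step is correct; it is the paper's Lemma~\ref{lem:box} in Hahn--Banach form. Reducing everything to a simultaneous bound on the signed error and the $L_1$ discretization error, both relative to $\|f\|_{L_1(\mu)}$, is also what the paper does. (The paper uses the thresholds $\frac14$ and $\frac12$, which are tight for $\tau=\frac12$ and give the constant via $128\,\ee^{1/23}\le 134$. With $\varepsilon=\frac15$ for both you would not reach the stated constant $134$.)

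\emph{Signed error.} Vector Bernstein controls $\|\mathbf z\|_2$ for $\mathbf z=\sum_i w_i^0\mathbf h(y_i)-\int_\Omega\mathbf h\dint\mu$, giving $|L(f_{\mathbf a})|\le\|\mathbf z\|_2\|\mathbf a\|_2$. But the inequality $\|f_{\mathbf a}\|_{L_1(\mu)}\le\|\mathbf a\|_2$ you quote points the wrong way. From $|L(f_{\mathbf a})|\le\varepsilon\|\mathbf a\|_2$, Lemma~\ref{lem:lewis} only lets you conclude $|L(f_{\mathbf a})|\le\varepsilon\sqrt n\,\|f_{\mathbf a}\|_{L_1(\mu)}$. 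What must be small is the dual norm $s_B(\mathbf z)=\sup\{\langle\mathbf a,\mathbf z\rangle:\|f_{\mathbf a}\|_{L_1(\mu)}\le1\}$ of the $L_1$-coefficient body. Passing through $\|\mathbf z\|_2$ can only give $s_B(\mathbf z)\le\sqrt n\|\mathbf z\|_2$, which is far too lossy.

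Take piecewise constants on $n$ cells $A_j$ of measure $1/n$ with $\varrho\equiv1$. Then $h_j=\sqrt n\,\mathbf 1_{A_j}$ and $s_B(\mathbf z)=\sqrt n\|\mathbf z\|_\infty\approx\sqrt{n\log n/m}$, while $\|\mathbf z\|_2\approx\sqrt{n/m}$. Your estimate therefore only yields $Z_{\rm sign}\lesssim n/\sqrt m$, which forces $m\gtrsim\gamma n^2$ instead of $n\log n$.

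\emph{Absolute error.} The same $\sqrt n$ loss defeats your $L_2$ route. Matrix Chernoff gives equivalence of $\mathbf a^\top\mathbf u$ in $L_2(\varrho_{\rm L}\mu)$, i.e.\ control on the scale $\|\mathbf a\|_2$. That says nothing at the scale $\|f_{\mathbf a}\|_{L_1(\mu)}$, and in particular gives no lower bound for $\sum_i w_i^0|f(y_i)|$. The factorization through $|f|^{1/2}$ is not linear in $f$, so no matrix inequality applies to it.

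Your fallback does not help as described. Symmetrization and contraction do not lead to an operator-norm deviation of a random matrix. They lead to $\E\, s_B(\mathbf z)^p$ with the Rademacher vector $\mathbf z=\frac1m\sum_i\epsilon_i\tilde{\mathbf h}(y_i)$, which is the same dual-norm problem again.

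\emph{Missing ingredient.} What you need is Lemma~\ref{lem:trunc}. Since $|f_{\mathbf a}|/\varrho_{\rm L}\le n$ for $\mathbf a\in B$, H\"older's inequality with respect to $\varrho_{\rm L}\mu$ gives $s_B(\mathbf z)^p\le n\int_\Omega|\langle\mathbf z,\mathbf u(x)\rangle|^p\varrho_{\rm L}(x)\dint\mu(x)$. This replaces the supremum over $B$ by an average of scalar marginals. For fixed $x$, $\langle\mathbf z,\mathbf u(x)\rangle$ is a sum of independent centered scalars bounded by $\gamma n/m$ with total variance at most $\gamma n/m$. Scalar Bernstein moment bounds, the choice $p\asymp\log(n/\delta)$ to absorb the factor $n^{1/p}$, and a moments-to-tails argument then yield $Z_{\rm sign},Z_{\rm abs}\lesssim\sqrt{\gamma n\log(n/\delta)/m}$ with probability at least $1-\delta$ (Lemma~\ref{lem:moments} and Proposition~\ref{prop:conc}).

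You also need the reduction of Remark~\ref{rem:reduction} before invoking Proposition~\ref{prop:lewis}, but that is minor.
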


	\noindent
    Theorem~\ref{thm:intro_RT} is Corollary~\ref{cor:Prob_Tchak} in Section~\ref{sec:RandomRules}.
	The weights are obtained by adapting the importance sampling weights $w_i^0=1/(m\varrho(y_i))$ within the box $[\frac12w_i^0,\frac32w_i^0]$, i.e., by solving a linear feasibility problem.
	
	\begin{remark}
		\label{rem:intro_RT}
		In Theorem~\ref{thm:intro_RT} sampling need not be performed strictly according to the Lewis density $\varrho_{\rm L}$. Any sampling density $\varrho$ satisfying $\varrho_{\rm L}\le\gamma\varrho$ for some $\gamma\ge1$ is suitable. In particular, if $\mu(\Omega)<\infty$ and $\varrho_{\rm L}$ is bounded, one may sample uniformly with respect to the underlying measure $\mu$, provided the required number of nodes is increased by a factor of $\gamma$, see~\eqref{eq:number_of_random_nodes}.
	\end{remark}
	
	Let us mention some consequences.
	\begin{itemize}
		\item {\em Almost equal weights.} For $\tau\in(0,\frac12]$, the box can be shrunk to $[(1-\tau)w_i^0,(1+\tau)w_i^0]$ at the price of $m\ge34\tau^{-2}\gamma n\log(4n/\delta)$ nodes (Corollary~\ref{cor:near_equal}). Thus, if $\varrho$ is chosen constant, which by Remark~\ref{rem:intro_RT} is possible whenever $\mu(\Omega)<\infty$ and the Lewis density is bounded, one obtains almost equal weights.
		\item {\em ``Almost'' spherical designs.}
		If $V$ and $\mu$ are invariant under a transitive group of measure preserving transformations, the Lewis density $\varrho_{\rm L}$ is constant itself, and the nodes can be drawn uniformly with $\gamma=1$ in~\eqref{eq:number_of_random_nodes} (Remark~\ref{rem:sphere}).
		For the space $\Pi_t$ of spherical polynomials of degree at most $t$ on $\mathbb{S}^d$ with $n=\dim\Pi_t\asymp_d t^d$, this shows that $m\asymp_d\tau^{-2}t^d\log(t/\delta)$ independent uniformly distributed points on $\mathbb{S}^d$ admit, with probability at least $1-\delta$, weights with
		\begin{equation}
			\label{eq:intro_almost_design}
			\frac{1-\tau}{m}\le w_i\le\frac{1+\tau}{m},\qquad \sum_{i=1}^m w_i=1,
		\end{equation}
		which integrate $\Pi_t$ exactly.
		Thus, random points are almost spherical $t$-designs with high probability, and the corresponding weights are certainly computable.
		The same holds, e.g., for trigonometric polynomials with frequencies in a hyperbolic cross on $\T^d$.
        \item {\em Richter-Tchakaloff rules with bounded weights.} Theorem~\ref{thm:RT_bounded} shows how a probabilistic node generation as in Theorem~\ref{thm:intro_RT} combined with a subsampling procedure, established by Lemma~\ref{lem:compress}, yields Richter-Tchakaloff rules of sample size $2n$ for any given $n$-dimensional space $V\subset L_1(\Omega,\mu)$ with weights $0\le w_i\lesssim 1/n$.
	\end{itemize}
	
	\paragraph{Positive quadrature in reproducing kernel Hilbert spaces.}
	Let now $\cH=\cH_K$ be a reproducing kernel Hilbert space (RKHS) of real-valued measurable functions on $(\Omega,\Sigma)$.
	The associated symmetric positive semidefinite kernel $K\colon \Omega\times\Omega\to\R$ is assumed to be $(\Sigma\otimes\Sigma)$-measurable, and $\mu$ is a positive measure on $(\Omega,\Sigma)$ with $\mu(\Omega)<\infty$ such that the trace of $K$ is finite, i.e.,
	\begin{equation}
    \label{P2:integrab}
		\tr(K)\coloneqq \int_{\Omega} K(x,x)\dint\mu(x) < \infty.
	\end{equation}
	These assumptions are a standard scenario in the literature on sampling and integration in RKHS, see, e.g.,~\cite{NoWo10,KrUl21,KaUlVo21,NaSchUl22, DoKrUl23}.
	Under condition~\eqref{P2:integrab}, $\cH$ embeds into $L_2(\Omega,\mu)$, and the embedding $\Id_{K,\mu}\colon  \cH \to L_2(\Omega,\mu)$ is Hilbert-Schmidt, see~\cite{HeBo04,StSc12}. Since $L_2(\Omega,\mu)$ consists of equivalence classes, it is not necessarily injective.
	If $\cH$ is separable, we denote by $\sigma_1\ge\sigma_2\ge\ldots\ge0$ the, at most countable, system of nontrivial singular numbers of $\Id_{K,\mu}$, so that $\tr(K)=\sum_{j\ge1}\sigma_j^2$, and by $V_n$ the span of the first $n$ right singular functions, see Section~\ref{sec:RKHS}.
	
	Our second result applies Theorem~\ref{thm:intro_RT} to the space $V_n^\prime\coloneqq V_n + \spann\{\mathbf{1}_\Omega\}$, with nodes drawn from the mixture $\varrho_n$ of the uniform density, the Lewis density of $V_n^\prime$, and the normalized tail of the kernel diagonal, see~\eqref{mix_density}.
	
	\begin{theorem}
		\label{thm:intro_RKHS}
		Let $(\Omega,\Sigma,\mu)$ be a probability space, let $\cH$ be a separable RKHS of real-valued functions on $\Omega$ with finite trace, and let $n\in\N$ and $\delta\in(0,1)$.
		If
		\begin{equation*}
			m\ge \max\Bigl\{804n\log\Bigl(\frac{12n}{\delta}\Bigr),\frac{6}{\delta}\Bigr\}
		\end{equation*}
		nodes $Y=(y_1,\ldots,y_m)$ are drawn independently according to $\varrho_n\mu$, then, with probability at least $1-\delta$, there exist weights $\tfrac{1}{2m\varrho_n(y_i)}\le w_i\le\frac{3}{2m\varrho_n(y_i)}$ with $\sum_{i=1}^m w_i=1$ such that $Q_{\mathbf{w}}(Y)$ is exact on $V_n^\prime$ and
		\begin{equation*}
			%\label{eq:intro_RKHS_error}
			\sup_{\|f\|_{\cH}\le1}
			\biggl|
			\int_{\Omega} f\dint\mu-Q_{\mathbf{w}}(Y)f
			\biggr|
			\le
			(1+\sqrt3)\,\sigma_{n+1}
			+  \biggl(\frac{1}{n}\sum_{j>n}\sigma_j^2\biggr)^{1/2} .
		\end{equation*}
	\end{theorem}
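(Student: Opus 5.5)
Our plan is to apply Theorem~\ref{thm:intro_RT} to $V_n^\prime=V_n+\spann\{\mathbf 1_\Omega\}$, which has dimension at most $n+1\le 2n$. The mixture density is, up to the precise constants in~\eqref{mix_density},
\begin{equation*}
\varrho_n=\tfrac13\bigl(1+\varrho_{\rm L}+\varrho_{\rm tail}\bigr),\qquad
\varrho_{\rm tail}(x)=\frac{K(x,x)-\sum_{j\le n}\sigma_j^2 e_j(x)^2}{\sum_{j>n}\sigma_j^2},
\end{equation*}
where $\varrho_{\rm L}$ is the Lewis density of $V_n^\prime$ and the $e_j$ are the left singular functions. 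Then $\varrho_{\rm L}\le 3\varrho_n$, so Theorem~\ref{thm:intro_RT} applies with $\gamma=3$ and $\delta/3$. The quantity $134\cdot 3\,(n+1)\log(12(n+1)/\delta)$ is dominated by $804n\log(12n/\delta)$, up to harmless adjustments. Hence, with probability at least $1-\delta/3$, we obtain weights in the box $[\tfrac{1}{2m\varrho_n(y_i)},\tfrac{3}{2m\varrho_n(y_i)}]$, exact on $V_n^\prime$. Since $\mathbf 1_\Omega\in V_n^\prime$ and $\mu(\Omega)=1$, these weights sum to one.

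For the error bound, fix $f$ with $\|f\|_{\cH}\le1$ and split $f=P_nf+(f-P_nf)$, where $P_n$ is the $\cH$-orthogonal projection onto $V_n$. Exactness kills $P_nf$, so the error equals $\int g\,\dint\mu-Q_{\mathbf w}g$ with $g=f-P_nf$ lying in the unit ball of $\cH\ominus V_n$. The term $|\int g\,\dint\mu|\le\|g\|_{L_2}\le\sigma_{n+1}$ is immediate. For the quadrature part we bound
\begin{equation*}
|Q_{\mathbf w}g|\le\Bigl(\sum_i w_i\Bigr)^{1/2}\Bigl(\sum_i w_i g(y_i)^2\Bigr)^{1/2}=\Bigl(\sum_i w_i g(y_i)^2\Bigr)^{1/2}.
\end{equation*}
Then $w_i\le \tfrac{3}{2}w_i^0$ gives
\begin{equation*}
\sum_i w_i g(y_i)^2\le \tfrac32\sup_{g}\sum_i \frac{g(y_i)^2}{m\varrho_n(y_i)}=\tfrac32\bigl\|\Phi^*\Phi\bigr\|,
\end{equation*}
where $\Phi:\cH\ominus V_n\to\R^m$ is the weighted sampling operator.

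This operator norm is the main step. We write $\Phi^*\Phi=\frac1m\sum_i \frac{K_n(\cdot,y_i)\otimes K_n(\cdot,y_i)}{\varrho_n(y_i)}$ with the tail kernel $K_n$. Its expectation $T$ is the restricted embedding operator, whose norm is $\sigma_{n+1}^2$. Each summand has norm at most $K_n(y,y)/(m\varrho_n(y))\le 3\sum_{j>n}\sigma_j^2/m$, using the tail part of the mixture. We would then split the operator norm as $\|T\|+\|\Phi^*\Phi-T\|$. A matrix Bernstein bound for the deviation would produce a $\log$ factor, so we would try a cruder route first. We bound $\|\Phi^*\Phi-T\|\le\|\Phi^*\Phi-T\|_{HS}$ and control its expectation via the second moment:
\begin{equation*}
\E\|\Phi^*\Phi-T\|_{HS}^2\le\frac1m\E\Bigl(\frac{K_n(y,y)}{\varrho_n(y)}\Bigr)^2\le\frac{3}{m}\Bigl(\sum_{j>n}\sigma_j^2\Bigr)^2 .
\end{equation*}
Markov's inequality at level $\delta/3$, combined with $m\ge 6/\delta$ and $m\gtrsim n$, then gives $\|\Phi^*\Phi-T\|\lesssim \frac{1}{n}\sum_{j>n}\sigma_j^2$ with probability $1-\delta/3$. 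If the Hilbert--Schmidt route loses constants, we would instead use the trace/Markov bound on $\sum_i g(y_i)^2/(m\varrho_n)$ more cleverly, splitting $\sigma_{n+1}^2$ off exactly.

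Combining the pieces via $\sqrt{a+b}\le\sqrt a+\sqrt b$, we get
\begin{equation*}
|Q_{\mathbf w}g|\le\sqrt{\tfrac32}\,\bigl(\sigma_{n+1}^2+\tfrac{c}{n}\textstyle\sum_{j>n}\sigma_j^2\bigr)^{1/2}.
\end{equation*}
The remaining task is to tune constants so that the total error is at most $(1+\sqrt3)\sigma_{n+1}+(\frac1n\sum_{j>n}\sigma_j^2)^{1/2}$. The factor $\sqrt3$ suggests the bound $\|\Phi^*\Phi\|\le 2\sigma_{n+1}^2+\frac{2}{3n}\sum_{j>n}\sigma_j^2$, which must come from the concentration step. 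A union bound over the failure events keeps the total failure probability at most $\delta$. We expect this concentration step, with explicit constants and the $6/\delta$ condition, to be the main obstacle.
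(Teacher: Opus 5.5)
Your reduction is the paper's: Theorem~\ref{thm:Prob_Tchak} for $V_n^\prime$ with $\gamma=3$, exactness on $V_n$, the bound $\sigma_{n+1}$ for the integral of the tail part, then Cauchy--Schwarz with $\sum_i w_i=1$ and $w_i\le\frac32 w_i^0$, reducing everything to $\lambda_{\max}$ of the weighted tail Gram operator. You also identify the right target, $\lambda_{\max}\le 2\sigma_{n+1}^2+\frac{2}{3n}T_n$.

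\textbf{The gap is the concentration step, and it costs a power of $n$, not constants.} Your second-moment bound $\E\|\Phi^*\Phi-T\|_{HS}^2\le 3T_n^2/m$ with Markov at level $\delta/3$ only gives $\|\Phi^*\Phi-T\|\le 3T_n/\sqrt{m\delta}$. For $m\asymp n\log(n/\delta)$ this is of order $T_n/\sqrt{\delta n\log(n/\delta)}$, which exceeds $T_n/n$ by roughly $\sqrt{n/\log n}$. You would need $m\gtrsim n^2/\delta$, and $m\ge 6/\delta$ contributes nothing towards the factor $1/n$. The resulting error term would be about $\sqrt{T_n}\,(n\log n)^{-1/4}$ instead of $\sqrt{T_n/n}$.

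Higher moments would not rescue this. When the tail spectrum is spread out, the Hilbert--Schmidt norm of the deviation is genuinely of order $T_n/\sqrt m$, because it sums over all tail directions and sees only $\E\|\tilde{\boldsymbol\psi}_n\|_2^4\le 3T_n^2$. Your fallback trace/Markov bound is worse still: $\E\tr(\Phi^*\Phi)=T_n$, with no gain in $n$ at all.

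What is needed is a dimension-free operator-norm concentration inequality for i.i.d.\ rank-one operators on $\ell_2(\N)$. It must use the uniform bound $\|\tilde{\boldsymbol\psi}_n(y)\|_2^2=E_n(y)/\varrho_n(y)\le 3T_n$, which is exactly what the tail component of $\varrho_n$ is for. The paper uses the infinite matrix Chernoff inequality (Theorem~\ref{thm:MoUl20}) with $L^2=3T_n$ and $\|\boldsymbol\Lambda_n\|_{2\to2}=\sigma_{n+1}^2$. Outside an event of probability $2^{3/4}m^{1-s}$ this gives $\lambda_{\max}\le 2\sigma_{n+1}^2+12(3+\sqrt5)\frac{s\log m}{m}T_n$ (Proposition~\ref{prop:gram}).

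Take $s=2$. The condition $m\ge 6/\delta$ is precisely what makes the exceptional probability smaller than $\delta/3$. The $\log m$ you wanted to avoid is harmless: $m\ge 804n\log(12n/\delta)$ and the monotonicity of $\log x/x$ on $[\mathrm{e},\infty)$ give $\frac{2\log m}{m}\le\frac{1}{131n}$. Hence $\lambda_{\max}\le 2\sigma_{n+1}^2+\frac{12(3+\sqrt5)}{131}\frac{T_n}{n}$ with $\frac{12(3+\sqrt5)}{131}<\frac23$, which is your target. The logarithmic oversampling in $m$ exists to absorb this $\log m$.

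A minor point on probabilities: under the stated bound on $m$, Theorem~\ref{thm:Prob_Tchak} only guarantees failure probability $8n\exp(-m/(804n))\le\frac{2\delta}{3}$ for the exactness step. For $n=1$ your allocation of $\delta/3$ does not follow, so split the budget as $\frac{2\delta}{3}+\frac{\delta}{3}$.
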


    \noindent
	Theorem~\ref{thm:intro_RKHS} is Corollary~\ref{cor:main2} in Section~\ref{sec:RKHS}.
	Hence, positive rules on $m\asymp n\log(\ee n)$ random nodes achieve the error bound $(1+\sqrt3)\sigma_{n+1}+(1/n\sum_{j>n}\sigma_j^2)^{1/2}$ with high probability and explicit constants. An important example of this setting is high-dimensional integration in Sobolev spaces with mixed smoothness $H^{\alpha}_{\rm mix}$ on the $d$-torus $\mathbb{T}^d$ in connection with hyperbolic cross spaces of trigonometric polynomials, see Corollary~\ref{cor:polynomial_log_decay} and the subsequent discussion.
	
	\paragraph{Optimal Richter-Tchakaloff quadrature widths.}
	Our third result removes the logarithmic oversampling.
	To this end, we reduce the random node set from Theorem~\ref{thm:intro_RKHS} by a constructive compression procedure to $O(n)$ nodes, while keeping exactness on $V_n$ and positivity of the weights (Lemma~\ref{lem:compress} and Proposition~\ref{prop:compress}).
	Our subsampling strategy is in the flavor of \cite{DoKrUl23} for least squares approximation. We do not only keep the exact finite dimensional quadrature but also control the tail bound. We state the result in terms of the Richter-Tchakaloff quadrature widths, introduced recently in \cite{BeyondTchak}. They measure best possible worst-case performance of positive rules on $\cH$ quantified by
	\begin{equation*}
		\kappa_m^+(\cH,\mu)
		\coloneqq
		\inf_{\substack{x_1,\ldots,x_m\in\Omega \\w_1,\ldots,w_m\ge 0}}
		\sup_{\|f\|_{\cH}\le 1}
		\biggl|
		\int_\Omega f \dint\mu - Q_{\mathbf{w}}(X)f
		\biggr|,
	\end{equation*}
	whereas the classical quadrature widths $\kappa_n(\cH,\mu)\le\kappa_n^+(\cH,\mu)$, i.e., the $n$-th minimal worst-case errors of quadrature, see, e.g., \cite[Ch.~10]{NoWo10}, \cite[(8.1.2)]{DuTeUl18}, allow arbitrary weights in $\R$.

	\begin{theorem}
		\label{thm:main1}
		Let $(\Omega,\Sigma,\mu)$ be a positive measure space with $\mu(\Omega)<\infty$, and let $\cH$ be a real-valued separable RKHS on $\Omega$ with finite trace.
		Then, there is an absolute constant $C\ge1$ such that
		\begin{align}
			\label{newbound}
			\kappa^{+}_{m}(\cH,\mu)
			\le  \sqrt{\mu(\Omega)}  \biggl( \frac{1}{n} \sum\limits_{j\ge n}\sigma_j^2 \biggr)^{1/2},\qquad n\in\N,
		\end{align}
		as long as $m\ge C n$.
		As a (loose) choice, one may take $C=39$ for a semi-constructive proof.
	\end{theorem}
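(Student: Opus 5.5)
We first reduce to the case $\mu(\Omega)=1$ by normalizing the measure. Replacing $\mu$ by $\tilde\mu=\mu/\mu(\Omega)$ multiplies the integral by $1/\mu(\Omega)$. It also multiplies the singular numbers of $\Id_{K,\mu}$ by $\mu(\Omega)^{-1/2}$, since $\|f\|_{L_2(\tilde\mu)}=\mu(\Omega)^{-1/2}\|f\|_{L_2(\mu)}$. Rescaling the weights of a positive rule for $\tilde\mu$ by $\mu(\Omega)$ then produces exactly the factor $\sqrt{\mu(\Omega)}$ in~\eqref{newbound}. So from now on $\mu$ is a probability measure, and we aim at $\kappa^+_m\le(\frac1n\sum_{j\ge n}\sigma_j^2)^{1/2}$ for $m\ge Cn$.

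The plan has two steps.

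\textbf{Step 1 (random rule).} Apply Theorem~\ref{thm:intro_RKHS} with a suitable $n'\asymp n$, namely $n'=\lfloor n/c\rfloor$ or the same $n$ with index shift, and with a fixed $\delta$, say $\delta=1/2$. This gives a positive rule on $m_0\asymp n\log n$ nodes with three properties:
\begin{itemize}
\item it is exact on $V_{n'}^\prime$;
\item its weights satisfy $w_i\asymp 1/(m_0\varrho_{n'}(y_i))$;
\item its worst-case error is bounded by $(1+\sqrt3)\sigma_{n'+1}+(\frac1{n'}\sum_{j>n'}\sigma_j^2)^{1/2}$.
\end{itemize}
This step only shows existence, which is why the proof is semi-constructive.

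\textbf{Step 2 (compression).} Reduce the $m_0$ nodes to $O(n)$ nodes using the subsampling Lemma~\ref{lem:compress} and Proposition~\ref{prop:compress}. The reduction must keep exactness on $V_{n'}^\prime$ and positivity. It must also keep control of the tail term, i.e.\ of the positive rule applied to $\sum_{j>n'}\sigma_j^2|e_j(x)|^2$, which equals $K(x,x)-\sum_{j\le n'}\sigma_j^2 e_j(x)^2$ (the kernel diagonal tail). The error of an exact positive rule splits into two parts:
\begin{itemize}
\item the integral of the tail component $f-P_{V_{n'}}f$;
\item the rule applied to that tail component.
\end{itemize}
By Cauchy--Schwarz, the second part is at most $\bigl(\sum_i w_i\bigr)^{1/2}\bigl(\sum_i w_i\,k_{\rm tail}(y_i,y_i)\bigr)^{1/2}$. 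The first part is bounded by $\sigma_{n'+1}$. I would rather use exactness on $\mathbf 1$ together with an $L_2$ estimate giving $\int|f-P f|\le\sigma_{n'+1}$. Then $\sum w_i=1$, and it suffices that the compressed rule satisfies $\sum_i w_i k_{\rm tail}(y_i,y_i)\lesssim \frac1n\sum_{j>n'}\sigma_j^2$. This holds in expectation for the importance-sampling weights, because the mixture density contains the normalized tail diagonal. The compression is done in the style of~\cite{DoKrUl23}. Because the weights are positive and the constraints are linear, Carathéodory-type / linear-programming vertex arguments leave at most $\dim V'+1$ nonzero weights while preserving one extra linear functional, the tail sum, without increasing it. A vertex of the polytope $\{w\ge0:\ \text{exact on }V',\ \sum w_i k_{\rm tail}(y_i,y_i)\le \text{current value}\}$ has at most $n'+2$ nonzero entries. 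So $O(n)$ nodes suffice.

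\textbf{Step 3 (absorbing $\sigma_{n'+1}$ into the tail average).} Use monotonicity of the singular numbers: $\sigma_{n'+1}^2\le\frac{1}{n'+1-n}\sum_{j=n}^{n'}\sigma_j^2$. Choosing $n'=c n$ with $c$ large makes the constants $(1+\sqrt3)\sigma_{n'+1}+(\frac1{n'}\sum_{j>n'}\sigma_j^2)^{1/2}$ combine to at most $(\frac1n\sum_{j\ge n}\sigma_j^2)^{1/2}$. The root-sum inequality costs a factor; e.g.\ $(a+b)^2\le 2a^2+2b^2$ needs $c$ with $2(1+\sqrt3)^2/(c-1)+2/c\le1$. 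This fixes $c$ and hence $C=O(c)$; the value $39$ then comes out of bookkeeping.

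The main obstacle is Step 2. The $\sigma_{n'+1}$ part of the error comes from functions in the orthogonal complement evaluated at nodes, and it is not controlled by the diagonal tail alone. The $\sigma_{n'+1}$ bound in Theorem~\ref{thm:intro_RKHS} presumably used the random structure, e.g.\ the $L_1$ discretization of $V$ extended to a slightly larger space, and this structure is lost after compression. I would first try to bound the whole error after exactness purely by $\int|g|+\sum w_i|g(y_i)|$ with $g=f-Pf$, estimating both terms by the tail diagonal via Cauchy--Schwarz. This gives $\int|g|\le(\sum_{j>n'}\sigma_j^2)^{1/2}$, which is too weak. The fix is to take $V'$ larger, namely the span of the first $n'$ singular functions, so that $\|g\|_{L_2}\le\sigma_{n'+1}$. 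Then the term $\int|g|\le\sigma_{n'+1}$, and the rule part is handled by the preserved tail-sum functional. This requires no random structure after compression.
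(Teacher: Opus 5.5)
\textbf{There is a genuine gap in Step 2, at the point you flag as the main obstacle.}

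Your bound for the rule part uses the pointwise estimate $|g(y_i)|\le\sqrt{E_{n'}(y_i)}$ and Cauchy--Schwarz:
\[
\sup_g\bigl|\textstyle\sum_i w_i g(y_i)\bigr|\le\bigl(\sum_i w_i\bigr)^{1/2}\bigl(\sum_i w_iE_{n'}(y_i)\bigr)^{1/2},
\]
where the supremum runs over tail components $g$ with $\|g\|_{\cH}\le1$. This requires $\sum_i w_iE_{n'}(y_i)\lesssim T_{n'}/n$, and that is false.

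The quantity $\sum_i w_iE_{n'}(y_i)$ is a positive quadrature of the nonnegative function $E_{n'}$, whose integral is $T_{n'}$.
\begin{itemize}
\item For the importance sampling weights its expectation is exactly $T_{n'}$, not $T_{n'}/n$.
\item The component $E_{n'}/T_{n'}$ of $\varrho_{n'}$ only gives the deterministic bound $\sum_i w_i^0E_{n'}(y_i)\le 3T_{n'}$, since each summand is at most $3T_{n'}/m$.
\item Whenever $E_{n'}$ is constant, every rule with $\sum_i w_i=1$ has $\sum_i w_iE_{n'}(y_i)=T_{n'}$ exactly. This happens, for instance, for translation-invariant kernels on $\T^d$; cf.\ the hyperbolic cross example after Corollary~\ref{cor:polynomial_log_decay}, where $\varrho_n\equiv1$.
\end{itemize}
So no choice of nodes, and in particular no vertex or Carath\'eodory compression preserving this linear functional, gets beyond $\sigma_{n'+1}+\sqrt{3T_{n'}}$. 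That is the bound without the decisive factor $1/n$.

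The loss is already in the first inequality. It bounds each $|g(y_i)|$ separately and so ignores cancellations between different nodes. By Proposition~\ref{prop:err_decomp}, the rule part is exactly $\|\sum_i w_i\boldsymbol{\psi}_{n'}(y_i)\|_2$. Its square is at most $(\sum_i w_i)\,\lambda_{\max}\bigl(\sum_i w_i\boldsymbol{\psi}_{n'}(y_i)\boldsymbol{\psi}_{n'}(y_i)^\top\bigr)$. Your argument replaces this $\lambda_{\max}$ by the trace $\sum_i w_iE_{n'}(y_i)$.

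\textbf{The missing idea is to keep working with the $\ell_2$ error vector throughout.}

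For the random rule, Proposition~\ref{prop:gram} bounds that $\lambda_{\max}$ by $2\sigma_{n+1}^2+O\bigl(\frac{\log m}{m}T_n\bigr)$ via the infinite matrix Chernoff inequality. In this way the pointwise bound $\|\tilde{\boldsymbol{\psi}}_n\|_2^2\le 3T_n$ is divided by $m$.

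For the compression, the quantity to control is quadratic in the weights, so a linear-programming vertex argument cannot handle it. Lemma~\ref{lem:compress} does the following:
\begin{itemize}
\item it moves along directions in $\ker\mathbf{L}$ minimizing $\|\mathbf{U}\mathbf{d}\|_2$;
\item it caps the weights at $\max\{p_i,1/k\}$;
\item via partial summation and Bessel's inequality it shows $\|\sum_i(p_i'-p_i)\mathbf{u}_i\|_2^2\le\frac{R^2}{k}\bigl(1+\frac{d}{k}\bigr)$ with $R^2=\frac94T_n$.
\end{itemize}
The cap at $1/k$ is what produces the factor $1/k$. Proposition~\ref{prop:compress} then uses the triangle inequality in $\ell_2$ to add only the term $\frac94\sqrt{(T_n/k)(1+(n+1)/k)}$ to the error of the uncompressed rule. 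Consequently no random structure has to survive the compression, which answers the worry in your last paragraph.

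If you apply Proposition~\ref{prop:compress} as stated, with $k=n+1$, to the rule of Corollary~\ref{cor:main2}, you get the paper's argument. You must also include this extra compression term in your Step~3 bookkeeping, which currently omits it. This recovers the paper's proof, with $C_2=19$ and $C=2C_2+1=39$.
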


    \noindent
	The bound~\eqref{newbound} is optimal over the class of all separable RKHS with finite trace, even among rules with arbitrary real weights~\cite{KrVy23}. In this universal sense, positivity of the weights is no restriction.
    Combining the compression with Richter's theorem instead of Theorem~\ref{thm:intro_RKHS} even gives  $C=\phi+1 = \phi^2 \approx 2.62$, where $\phi = (\sqrt{5}+1)/2$ denotes the Golden Ratio, see  Remark~\ref{rem:nonconstructive}, however non-constructively.

	\paragraph{Polynomial decay and mixed smoothness.} For polynomially decaying singular numbers, Theorem~\ref{thm:main1} gives the following.

	\begin{corollary}
		\label{cor:polynomial_log_decay}
		Let $C$ be the absolute constant from Theorem~\ref{thm:main1}, and assume
		$m\geq Cn$ for $n\in\N$.
		If $\sigma_j \asymp j^{-\alpha}(\log(\ee j))^\beta$ and
		$\alpha>1/2$, $\beta\in\R$, then there exists a constant
		$C_{\alpha,\beta}\geq 1$ such that
		\begin{equation*}
			\kappa_m^+(\cH,\mu)
			\leq
			C_{\alpha,\beta}
			\sqrt{\mu(\Omega)}
			n^{-\alpha}(\log(\ee n))^\beta \quad\text{ for all }n\in\N.
		\end{equation*}
	\end{corollary}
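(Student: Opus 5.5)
The plan is to apply Theorem~\ref{thm:main1} and then estimate the tail sum $\frac1n\sum_{j\ge n}\sigma_j^2$ under the assumed polynomial–logarithmic decay. Fix $n\in\N$ and $m\ge Cn$. By \eqref{newbound},
\[
\kappa_m^+(\cH,\mu)\le \sqrt{\mu(\Omega)}\Bigl(\frac1n\sum_{j\ge n}\sigma_j^2\Bigr)^{1/2}.
\]
It therefore suffices to show that
\[
\sum_{j\ge n}\sigma_j^2\le c_{\alpha,\beta}\, n^{1-2\alpha}(\log(\ee n))^{2\beta}
\]
with a constant $c_{\alpha,\beta}$ independent of $n$. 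Taking square roots then gives the claim with $C_{\alpha,\beta}=\max\{1,\sqrt{c_{\alpha,\beta}}\}$.

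For the tail estimate, use the upper half of the assumption $\sigma_j\asymp j^{-\alpha}(\log(\ee j))^\beta$, namely $\sigma_j^2\le c\, j^{-2\alpha}(\log(\ee j))^{2\beta}$ for all $j$. Split the sum dyadically into blocks $2^k n\le j<2^{k+1}n$ for $k\ge0$. Each block has at most $2^k n$ terms. On it, $j^{-2\alpha}\le (2^kn)^{-2\alpha}$, and $(\log(\ee j))^{2\beta}$ lies within constant factors of $(\log(\ee\, 2^k n))^{2\beta}$, because $\log(\ee\,2j')\le 2\log(\ee j')$. Hence
\[
\sum_{j\ge n}\sigma_j^2\lesssim n^{1-2\alpha}\sum_{k\ge0}2^{k(1-2\alpha)}\bigl(\log(\ee\,2^kn)\bigr)^{2\beta}.
\]

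The factors $(\log(\ee\,2^kn))^{2\beta}$ are handled by comparing them with $(\log(\ee n))^{2\beta}$.
\begin{itemize}
\item If $\beta\le0$, they are bounded by $(\log(\ee n))^{2\beta}$ directly.
\item If $\beta>0$, use $\log(\ee\,2^kn)\le \log(\ee n)+k\log2\le (1+k)\log(\ee n)$, which gives the bound $(1+k)^{2\beta}(\log(\ee n))^{2\beta}$.
\end{itemize}
In both cases the remaining series $\sum_k 2^{k(1-2\alpha)}(1+k)^{2\beta_+}$ converges, because $\alpha>1/2$ makes the geometric factor decay. Its value depends only on $\alpha$ and $\beta$.

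I do not expect a genuine obstacle. The only points needing care are the slowly varying logarithmic factor when $\beta>0$, handled above, and the convergence of the series, which is exactly where $\alpha>1/2$ is used. If the singular numbers are only finitely many, the tail sum is a partial sum of the same series, so the bound holds trivially.
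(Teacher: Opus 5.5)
Your proposal is correct and follows the paper's own route: apply Theorem~\ref{thm:main1} and bound the tail $\frac1n\sum_{j\ge n}\sigma_j^2$. Your dyadic estimate spells out the ``direct calculation'' the paper leaves implicit, and, as you note, it uses only the upper half of the assumption $\sigma_j\asymp j^{-\alpha}(\log(\ee j))^\beta$.
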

	
	\begin{proof}
		Since $m\geq Cn$, we can apply Theorem~\ref{thm:main1}. A direct calculation shows
		\begin{equation*}
			\frac{1}{\sqrt{\mu(\Omega)}} \kappa^{+}_{m}(\cH,\mu)
			\leq \bigg( \frac{1}{n}\sum\limits_{j\ge n}\sigma_j^2 \bigg)^{1/2} \asymp  \bigg( \frac{1}{n}\sum\limits_{j\ge n}j^{-2\alpha}(\log(\ee j))^{2\beta} \bigg)^{1/2} \asymp   n^{-\alpha}(\log(\ee n))^{\beta}. \qedhere
		\end{equation*}
	\end{proof}

    \noindent
	The most prominent instance of Corollary~\ref{cor:polynomial_log_decay} are the periodic Sobolev spaces with dominating mixed smoothness $\cH\coloneqq H^\alpha_{\rm mix}(\T^d)$, $\alpha>1/2$, on the $d$-torus $\T^d\cong [0,1]^d$, equipped with the Lebesgue measure $\mu\coloneqq \lambda^d$. We refer to \cite[Sect.\ 3]{DuTeUl18} for the definition of the spaces and to \cite[Thm.\ 4.5.1]{DuTeUl18} for the decay of the singular numbers (linear widths, $p=q=2$).  
	In fact, $\sigma_j\asymp j^{-\alpha}(\log(\ee j))^{(d-1)\alpha}$, and Corollary~\ref{cor:polynomial_log_decay} yields positive rules with
	\begin{equation}
		\label{eq:mixed}
		\kappa_m^+(H^\alpha_{\rm mix}(\T^d),\lambda)\lesssim_{\alpha,d} m^{-\alpha}(\log m)^{(d-1)\alpha}.
	\end{equation}
	The random rules of Theorem~\ref{thm:intro_RKHS}, i.e., without compression, give the slightly weaker bound $m^{-\alpha}(\log m)^{d\alpha}$ with high probability.
	If $V_n$ is spanned by the real trigonometric system with frequencies in a hyperbolic cross, see \cite[Sect.~2.3]{DuTeUl18}, then $V_n^\prime=V_n$ is translation invariant, so that $\varrho_n\equiv1$, and the nodes in Theorem~\ref{thm:intro_RKHS} are uniformly distributed points.
	\subsection{Discussion and related work}
	This topic has become a very active area of research. We have tried to give an overview of the most relevant contributions from our point of view.
    
    \paragraph{Richter-Tchakaloff and exact rules from i.i.d.\ nodes.}
		The classical theorem of Richter~\cite[Thm.~4]{Richter57} guarantees the existence of a positive quadrature rule with at most $n$ nodes that is exact on $V$, i.e., $Q_{\mathbf w}(X)f=\int_\Omega f\dint\mu$ for all $f\in V$. It is a general measure-theoretic version of Tchakaloff's celebrated theorem~\cite[Thm.~II]{Tch57} from 1957, concerning polynomial spaces on compact subsets of $\R^d$, and in fact predates the latter. Both theorems were published in the same year, yet Richter's result appeared several months earlier. A predecessor for bounded measurable functions was established even before, in 1952, by Rosenbloom~\cite{Ros52}. The historical development is rather intricate. 
	Indeed, Richter's theorem was reproved several times, e.g., by Rogosinski~\cite[Thm.~1]{Rog58} in 1958 and by Bisgaard~\cite[Thm.~5.1]{BerSas12} in 2012, in an article authored by Berschneider and Sasv{\'a}ri, see also Bayer and Teichmann~\cite{BaTe06}. For a more detailed historical account, we refer to di Dio and Schmüdgen~\cite{DiDioSchmudgen22}.

Very recently, B\v{e}l\'{\i}k, Narayan and Turnage~\cite[Thm.~10]{BeNaTu26} obtained non-negative exact rules from randomized least squares with $M\asymp QJ\log(N/\xi)$ samples, where $Q\ge N$ and $J$ is an implicit, problem dependent threshold.
	Under an $L_r$-integrability condition, $r>2$, this becomes $M\asymp (N\log N)^{r/(r-2)}$~\cite[Lem.~5]{BeNaTu26}, i.e., polynomially larger than $N\log N$. However, the utilized sampling density is more suitable for implementation and practical purposes compared to the Lewis density.  

Migliorati and Nobile \cite{MiNo22} consider randomized cubature based on
weighted least squares with i.i.d.\ nodes from the optimal $L_2$ (Christoffel) density, see also \cite[Sect.\ 7]{KaUlVo21}.
Their rules are exact on $V$ with high probability for $m\gtrsim n\log n$, but positive
weights are only guaranteed under the quadratic oversampling $m/\log m\gtrsim n^2$ in 
\cite[Thm.~6]{MiNo22}. In contrast, Lemma~\ref{lem:box} searches for a correction within an
$\ell_\infty$-box, which, via the $L_1$ geometry of the Lewis density, is feasible
already for $m\asymp n\log n$.
  
	Our Theorem~\ref{thm:intro_RT} requires $m\asymp n\log(n/\delta)$ nodes with absolute constants, for every $n$-dimensional $V\subset L_1(\Omega,\mu)$.
	Hayakawa, Lyons and Oberhauser~\cite{HaLyOb23} bound the number of i.i.d.\ samples whose convex hull contains a given point by $(3n+1)/\alpha$, where $\alpha$ is the Tukey depth of this point.
	For the integral vector of a basis of $V$, the Tukey depth can be as small as $1/n$, e.g., for piecewise constant functions on $n$ cells of equal measure, which only gives $O(n^2)$ samples.
	In the same example, any rule exact on $V$ needs a node in each cell, so that, by the coupon collector problem, $\gtrsim n\log n$ i.i.d.\ nodes are necessary.
	Hence, the oversampling in Theorem~\ref{thm:intro_RT} is optimal.
	Oettershagen~\cite[Ch.~4]{Oe17} constructs exact rules on uniformly distributed random points for trigonometric polynomials on a hyperbolic cross, also with logarithmic oversampling, but with stable minimal-norm weights that are not necessarily positive.
	
A widely used computational approach is Carath\'eodory-Tchakaloff compression by Piazzon, Sommariva, and Vianello
\cite{SommarivaVianello15,PiazzonSommarivaVianello17}. A discrete measure that is
already exact on $V$ is reduced to at most
$\dim V$ nodes with positive weights via linear programming or non-negative least
squares. This presupposes an exact initial rule, which Theorem~\ref{thm:intro_RT} provides
from random nodes. Note that our compression step in Section \ref{sec:compression}  controls  the size of the  weights and the error
outside of $V$, see Lemma~\ref{lem:compress}.

    On the sphere, spherical $t$-designs have been computed by Gräf and Potts~\cite{GrPo11,Gr13} using nonconvex optimization based on fast spherical Fourier transforms, see also~\cite{Wo18}.
	Optimal weights for random points on manifolds are studied in~\cite{EhGrOa19}.
	Positive rules of degree $t$ exist on every point set with covering radius at most $c_d/t$, see~\cite{MhNaWa01}, hence on $m\asymp_d t^d\log t$ random points~\cite{BrReSaSlWaWo18,ZhZh25}. Similar techniques have been used in \cite{FiHiJaUl24}. Theorem~\ref{thm:intro_RT} adds two-sided weight bounds~\eqref{eq:intro_almost_design} with explicit constants and weights computable by linear programming, using only weak geometric  properties of $\mathbb{S}^d$, see Remark \ref{rem:sphere} below. 
    
	\paragraph{Existing general bounds for quadrature widths.}
	A fundamental bound, which does not even require separability, is of Monte Carlo type~\cite[Thm.~4.7]{BeyondTchak}, namely
	\begin{align}
		\label{bound1}
		\kappa_n^+(\cH,\mu)
		\le  \sqrt{\mu(\Omega)}
		\Bigl(\frac{2}{n} \tr(K) \Bigr)^{1/2}.
	\end{align}
	The same order $n^{-1/2}$ is achieved by the positive rules considered in~\cite[Ch.~10]{NoWo10}, namely QMC rules with equal weights, properly normalized QMC rules, and importance sampling with weights proportional to $K(x,x)^{-1/2}$, see~\cite[Thms.~10.4, 10.7, 10.10]{NoWo10}.
	According to~\cite[NR~10.7:1]{NoWo10}, it is the order one should expect as a lower bound in typical applications.
	Our rules start from the  importance sampling weights $w_i^0=1/(m\varrho(y_i))$, but the correction within $[\frac12w_i^0,\frac32w_i^0]$, which makes them exact on $V_n^\prime$ followed by a compression step, lifts the order from $n^{-1/2}$ to the optimal one in~\eqref{newbound}.
	In the separable case, it is further known from~\cite{BeyondTchak} that
	\begin{align*}
		%\label{bound2}
		\kappa_n^+(\cH,\mu)
		\le
		2\sqrt{\mu(\Omega)}\biggl(\sum_{j\ge n}\sigma_j^2\biggr)^{1/2}.
	\end{align*}
	For classes $\mathcal{F}$ of bounded $\mu$-measurable functions, Novak~\cite{No86} showed
	\begin{align}
		\label{bound3}
		\kappa^{+}_{n+1}(\mathcal{F},\mu) \le 2\mu(\Omega) d_n(\mathcal{F})_{\infty},
	\end{align}
	where $d_n(\mathcal{F})_{\infty}$ is the $n$-th Kolmogorov width of $\mathcal{F}$ in the uniform norm, see~\cite[p.~45]{DuTeUl18}.
	None of these bounds captures the asymptotic behavior of $\kappa_n^+(\cH,\mu)$ correctly~\cite[Rem.~4.5]{BeyondTchak}. In particular,~\eqref{bound3} loses a logarithmic factor for mixed smoothness, see~\eqref{eq:mixed_Novak} below and~\cite[Thm.~4.3.8]{DuTeUl18}.
	
	For the classical widths $\kappa_n(\cH,\mu)$ with signed weights, integrating the weighted least squares approximant from~\cite[Thm.~1]{DoKrUl23}, see also~\cite{KrUl21}, yields the sharp universal bound
	\begin{align*}
		%\label{est:classic_kappa}
		\kappa_{n}(\cH,\mu)
		\le
		\sqrt{\mu(\Omega)}\biggl( \frac{1}{cn} \sum_{j\ge cn}\sigma_j^2
		\biggr)^{1/2}
	\end{align*}
	with an absolute constant $0<c\le 1$, which cannot be improved for general $\cH$ with given singular numbers~\cite{KrVy23}.
	Theorem~\ref{thm:main1} shows that $\kappa_n^+(\cH,\mu)$ satisfies the same bound.
	This gives a partial answer to~\cite[Open Problem~45]{NoWo10}, which asks for the RKHS in which positive linear functionals admit optimal algorithms with non-negative weights.
	In fact, for integration on separable RKHS with finite trace, positive weights are no restriction as long as only the singular numbers are known.
	For individual spaces, a gap between $\kappa_n^+(\cH,\mu)$ and $\kappa_n(\cH,\mu)$ is not excluded.
	It can be exponential in the dimension, see the example in~\cite[Sect.~10.6]{NoWo10}.
	
	\paragraph{Polynomial decay and (mixed) Sobolev smoothness.}
	We compare~\eqref{eq:mixed} with the literature on $H^\alpha_{\rm mix}(\T^d)$.
	\begin{itemize}
		\item \emph{Novak's bound.}
		Since $d_m(H^\alpha_{\rm mix}(\T^d))_\infty\lesssim_{\alpha,d} m^{-\alpha}(\log m)^{(d-1)\alpha+1/2}$~\cite[Thm.~4.3.8]{DuTeUl18},~\eqref{bound3} gives
		\begin{equation}
			\label{eq:mixed_Novak}
			\kappa_m^+(H^\alpha_{\rm mix}(\T^d),\lambda)\lesssim_{\alpha,d} m^{-\alpha}(\log m)^{(d-1)\alpha+1/2},
		\end{equation}
		which is worse than~\eqref{eq:mixed} by $\sqrt{\log m}$.
		For $d=2$, this loss is intrinsic to Kolmogorov widths by the small ball inequality, see~\cite[p.\ 57]{DuTeUl18}.
		
		\item \emph{Sparse grids} have signed weights and the sharp order $m^{-\alpha}(\log m)^{(d-1)(\alpha+1/2)}$, see \cite[Thm.~8.3.1]{DuTeUl18},~\cite{DuUl15}.
		
		\item \emph{Lattice rules.} Korobov's cubature formulas, i.e., rank-1 lattice rules with optimal coefficients, give $m^{-\alpha}(\log m)^{\alpha d}$, which Bakhvalov improved to $m^{-\alpha}(\log m)^{\alpha(d-1)}$ for $\alpha>1$, see~\cite[Sect.~8.9]{DuTeUl18} and~\cite{Te93}. Hence, for $d\ge3$, the best known lattice rules attain the same order as~\eqref{eq:mixed}, and optimal rank-1 lattice rules are not known~\cite[Sect.~8.4]{DuTeUl18}.
		Component-by-component constructions, see~\cite{CoKuNu06,NuCo06}, achieve $m^{-\alpha+\varepsilon}$ for every $\varepsilon>0$, see \cite[Thm.~5.12]{DiKuSl13} and \cite{DiKrPi22}.
         In $d=2$, the Fibonacci lattice is optimal~\cite[Thm.~8.4.1]{DuTeUl18}.

		\item \emph{Random points.}
		Uniform random points with optimal weights were observed to achieve the main rate $m^{-\alpha}$ in~\cite[Ch.~4]{Oe17}, where $m^{-\alpha+1/2}(\log m)^{\alpha d-1/2}$ was proved.
		The order $m^{-\alpha}(\log m)^{\alpha d}$ was shown in~\cite[Cor.~8.6]{KaUlVo21} for least squares weights, which may be negative.
		Theorem~\ref{thm:intro_RKHS} gives this order with positive weights summing to one on plain uniform random points, and Theorem~\ref{thm:main1} improves it by $(\log m)^\alpha$.
		
		\item \emph{Digital nets and Frolov's method.}
		Frolov's cubature~\cite{Fr76,UlUl16}, order-2 digital nets for $1/2<\alpha<2$, see \cite{HiMaOeUl16}, and higher order digital nets and sequences~\cite{Di08,DiPi10,GoSuYo17,GoSuYo18} attain the optimal order $m^{-\alpha}(\log m)^{(d-1)/2}$ with positive weights, see~\cite[Thms.~8.2.1, 8.5.1 and Sect.~8.7]{DuTeUl18}.
		These are, to our knowledge, the only constructions in $d\geq 3$ that outperform~\eqref{eq:mixed} in a special case. Note that there are so far no counterparts of these methods for general RKHS.   
		
		\item \emph{Isotropic Sobolev spaces.}
        For $H^s(\Omega)$, $s>d/2$, on bounded domains or compact manifolds $\Omega$ such as $\mathbb{S}^d$, we have $\sigma_j\asymp j^{-s/d}$, and Theorem~\ref{thm:main1} gives the optimal order $m^{-s/d}$ with positive weights, while Theorem~\ref{thm:intro_RKHS} gives $(m/\log m)^{-s/d}$ on random points.
        On compact Riemannian manifolds, this order is known: positive rules with $m\asymp r^d$ nodes that are exact for all Laplace-Beltrami eigenfunctions with eigenvalues less than $r^2$ exist and have worst-case error $\lesssim r^{-s}$~\cite[Lem.~2.11, Thm.~2.12]{BrChCoGiSeTr14}.
        Ehler et al.~\cite[Thms.~3 and 5]{Ehler2021} extend this by reweighting to measures $\rho\,\mathrm{d}x$ with density $\rho\in H^s$, even with weights summing to one, and show that the order $m^{-s/d}$ is sharp.
        Theorem~\ref{thm:main1} recovers the upper bound without using the manifold structure and for every bounded density $\rho$, since then the singular numbers of $H^s\hookrightarrow L_2(\rho\,\mathrm{d}x)$ are still $\lesssim j^{-s/d}$.
        Krieg and Sonnleitner~\cite{KrSo24a,KrSo24b}, see also~\cite{KrNoSo22}, showed that i.i.d.\ uniform points are asymptotically optimal for recovery and, with suitable weights obtained from moving least squares, for integration, closing a logarithmic gap in~\cite{EhGrOa19}.
	\end{itemize}
	The point of Theorems~\ref{thm:intro_RKHS} and~\ref{thm:main1} is that they hold for every separable RKHS with finite trace and use no structure of $\Omega$, $\mu$, or $K$ beyond the singular system.
	
	\paragraph{Further literature.}
	 The Richter-Tchakaloff quadrature widths and the bounds~\eqref{bound1}-\eqref{bound3} were studied in~\cite{BeyondTchak}. 
	In kernel quadrature, Belhadji, Bardenet and Chainais~\cite{BeBaCh19} sample nodes from a projection determinantal point process (DPP) associated with the leading eigenfunctions and bound the error with optimal, possibly negative, weights in terms of the tail sum of the eigenvalues (without the factor $1/n$). 
    
    Bach~\cite{Bach17} samples $m=\mathcal{O}(n\log n)$ i.i.d.\ nodes from a ridge leverage
score density and obtains, with regularized least squares weights, the error
$\mathcal{O}(\sigma_n)$. This works with a regularity assumption on the decay of the singular
numbers. Positivity of the weights is explicitly not covered there. In contrast, our
rules have positive weights, are exact on $V_n'$, require no assumption on the decay of
the singular numbers, and achieve the optimal order with $\mathcal{O}(n)$ nodes after
compression (Theorem~\ref{thm:main1}). 
	
    Positive weights via random subsampling and recombination are obtained by Hayakawa,
Oberhauser and Lyons~\cite{HaObLy22}, with expected squared worst-case error bounded by
the full eigenvalue tail $\sum_{j\ge n}\sigma_j^2$ plus a Monte Carlo term, i.e., without
the factor $1/n$.
   
Glaubitz~\cite{Glaubitz23} shows that weighted least squares cubature on equidistributed
deterministic points becomes positive and exact for sufficiently many points, without an
explicit bound on their number. Numerically, a quadratic number of points in $\dim V$ is
observed to be necessary.

In the very recent contribution Hayakawa \cite{Hayakawa26} the nodes a drawn directly from $\mu$. The error bounds are stated in terms of the tail sum of squared singular values but without the leading factor $1/n$.
    
	\paragraph{Notation.}
	For $a,b\ge0$, we write $a\lesssim b$ if there is a constant $c>0$ with $a\le cb$, where the dependence of $c$ on parameters is indicated by subscripts, and $a\asymp b$ if $a\lesssim b$ and $b\lesssim a$.
	Vectors $\mathbf{a}\in\R^n$ and elements of $\ell_2(\N)$ are treated as column vectors, $\|\cdot\|_2$ denotes the Euclidean (or $\ell_2$-) norm, and $\|\mathbf{A}\|_{2\to2}$ the spectral norm of a matrix (or operator) $\mathbf{A}$. We further write $\mathbf{A}^\top$ and $\mathbf{A}^\ast$ for the transpose and conjugate transpose of $\mathbf{A}$, respectively. 
	For symmetric positive semidefinite matrices (or operators) $\mathbf{A},\mathbf{B}$, we write $\lambda_{\max}(\mathbf{A})$ for the largest eigenvalue of $\mathbf{A}$ and $\mathbf{A}\preceq\mathbf{B}$ if $\mathbf{B}-\mathbf{A}$ is positive semidefinite (Loewner order).
	$\mathbf{I}_n$ denotes the $n\times n$ identity matrix, $\mathbf{1}_\Omega$ the constant one function on $\Omega$, and $\E$ and $\Prob$ expectation and probability. If necessary, a subscript indicates with respect to which random variables they are taken.
	
	\section{A probabilistic Richter-Tchakaloff construction}\label{sec:RandomRules}
	
	Throughout this section, $(\Omega,\Sigma,\mu)$ is an arbitrary measure space and $V\subset L_1(\Omega,\mu)$ is a space of real-valued functions on $\Omega$ with $\dim V=n$.
	The analysis is based on the $L_1$-Lewis density of $V$ given by Proposition~\ref{prop:lewis}, which is a reformulation of~\cite[Thm.~1]{Lewis78} for $p=1$. The $L_1$-Lewis density stems from Banach space geometry. Using a corresponding change of measure/density leads to an embedding of $ V\subset L_1$ into $\ell_1^m$, see \cite{Talagrand90}. For its computation in the discrete case, see \cite{CohenPeng15}.
	
	\begin{proposition}
		\label{prop:lewis}
		Let $V\subset L_1(\Omega,\mu)$ with $\dim V=n\in\N$, and assume that no nonzero $f\in V$ vanishes $\mu$-almost everywhere.
		There exists a basis $h_1,\ldots,h_n$ of $V$ such that
		\begin{equation}
			\label{eq:lewisid}
			\int_{\{\varrho_{\rm L}>0\}}\frac{h_jh_k}{\varrho_{\rm L}}\dint\mu=\delta_{jk},
			\qquad\text{where}\qquad
			\varrho_{\rm L}\coloneqq \biggl( \frac{1}{n}\sum_{j=1}^{n} h_j^2 \biggr)^{1/2} .
		\end{equation}
		In particular, $\varrho_{\rm L}$ is a probability density, the so-called $L_1$-Lewis density of $V$.
	\end{proposition}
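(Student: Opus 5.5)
We prove Proposition~\ref{prop:lewis} by a variational argument in the spirit of Lewis' original proof for $p=1$. Fix an arbitrary basis $g_1,\dots,g_n$ of $V$, write $\mathbf g=(g_1,\dots,g_n)^\top$, and for an invertible matrix $\mathbf A\in\R^{n\times n}$ set $\mathbf h=\mathbf A\mathbf g$ and
\[
\Phi(\mathbf A)\coloneqq \int_\Omega \|\mathbf A\mathbf g(x)\|_2\dint\mu(x).
\]
This is finite, since $\|\mathbf A\mathbf g\|_2\le\sum_j|(\mathbf A\mathbf g)_j|$ and each component lies in $L_1$. We maximize $\log\det\mathbf A$ subject to $\Phi(\mathbf A)\le 1$.

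\emph{Existence of a maximizer.} The function $\Phi$ is convex and positively homogeneous in $\mathbf A$. By the assumption that no nonzero $f\in V$ vanishes a.e., we have $\int|\mathbf a^\top\mathbf g|\dint\mu>0$ for every unit vector $\mathbf a$. Compactness of the unit sphere and continuity then give $\Phi(\mathbf A)\ge c\,\|\mathbf A\|_{2\to2}$ with $c>0$, using that $\|\mathbf A\mathbf g\|_2\ge|\mathbf e^\top\mathbf A\mathbf g|$ for a top singular direction. Hence the feasible set is compact. Since $\det$ is continuous and positive somewhere on it, a maximizer $\mathbf A_\ast$ with $\det\mathbf A_\ast>0$ exists, and by homogeneity $\Phi(\mathbf A_\ast)=1$.

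\emph{First-order condition.} This is the main obstacle, because $\mathbf A\mapsto\|\mathbf A\mathbf g(x)\|_2$ is not differentiable where $\mathbf h(x)=0$, i.e.\ where $\varrho_{\rm L}=0$. We use one-sided directional derivatives. For any $\mathbf B$, consider $\mathbf A_t=(\mathbf I+t\mathbf B)\mathbf A_\ast$. The map $t\mapsto\|(\mathbf I+t\mathbf B)\mathbf h(x)\|_2$ is convex, so its difference quotients are monotone and dominated by $\|\mathbf B\mathbf h(x)\|_2\in L_1$. Dominated (or monotone) convergence therefore gives
\[
\frac{d}{dt}\Big|_{t=0^+}\Phi(\mathbf A_t)=\int_{\{\mathbf h\neq0\}}\frac{\mathbf h^\top\mathbf B\mathbf h}{\|\mathbf h\|_2}\dint\mu+\int_{\{\mathbf h=0\}}0\dint\mu .
\]
This expression is linear in $\mathbf B$, so $\Phi$ is Gateaux differentiable at $\mathbf A_\ast$ along these directions. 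Lagrange optimality, comparing $\log\det(\mathbf I+t\mathbf B)=t\tr\mathbf B+o(t)$ against the constraint after rescaling $\mathbf A_t$ by $1/\Phi(\mathbf A_t)$, yields for all $\mathbf B$
\[
\tr\mathbf B\le n\int_{\{\mathbf h\ne0\}}\frac{\mathbf h^\top\mathbf B\mathbf h}{\|\mathbf h\|_2}\dint\mu .
\]
Applying this to $\pm\mathbf B$ turns the inequality into an equality. Choosing $\mathbf B=\mathbf e_j\mathbf e_k^\top$ gives
\[
n\int\frac{h_jh_k}{\|\mathbf h\|_2}\dint\mu=\delta_{jk}.
\]
The multiplier equals $n$ because $\mathbf B=\mathbf I$ yields $n=n\Phi(\mathbf A_\ast)$.

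\emph{Conclusion.} By definition $\varrho_{\rm L}=n^{-1/2}\|\mathbf h\|_2$, so $h_jh_k/\varrho_{\rm L}=\sqrt n\,h_jh_k/\|\mathbf h\|_2$ on $\{\varrho_{\rm L}>0\}=\{\mathbf h\ne0\}$. This normalization mismatch is fixed by rescaling: replace $\mathbf h$ by $\lambda\mathbf h$. The left side of~\eqref{eq:lewisid} scales like $\lambda$, so choosing $\lambda=\sqrt n$ gives exactly~\eqref{eq:lewisid}. The $h_j$ form a basis since $\mathbf A_\ast$ is invertible.

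Finally, to see that $\varrho_{\rm L}$ is a probability density, note that it is nonnegative and measurable. Taking the trace of~\eqref{eq:lewisid},
\[
n=\int_{\{\varrho_{\rm L}>0\}}\frac{\sum_j h_j^2}{\varrho_{\rm L}}\dint\mu=\int n\,\varrho_{\rm L}^2/\varrho_{\rm L}\dint\mu=n\int\varrho_{\rm L}\dint\mu,
\]
so $\int_\Omega\varrho_{\rm L}\dint\mu=1$.
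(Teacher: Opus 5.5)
Your proof is correct, but it takes a different route from the paper. The paper does not prove existence at all. It cites Lewis' Theorem~1 for $p=1$ and uses the non-degeneracy hypothesis only to identify $V$ with its image in $L_1(\Omega,\mu)$, so that Lewis' functions have unique representatives in $V$. It then sets $h_j=\sqrt n f_j$ and obtains the bilinear identity~\eqref{eq:lewisid} by polarizing Lewis' quadratic identity with $\mathbf e_j$ and $\mathbf e_j+\mathbf e_k$. The trace argument for $\int_\Omega\varrho_{\rm L}\dint\mu=1$ is the same as yours.

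You instead reprove the $p=1$ case of Lewis' theorem by the determinant-maximization argument. In your version the hypothesis enters as coercivity of the constraint $\int_\Omega\|\mathbf A\mathbf g\|_2\dint\mu\le1$, and without it no maximizer exists. The first-order condition with $\mathbf B=\mathbf e_j\mathbf e_k^\top$ then yields the bilinear identity directly, with no polarization step.

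Your handling of the zero set is correct, and it is in fact no obstacle. Wherever $\mathbf h(x)=0$ we also have $(\mathbf I+t\mathbf B)\mathbf h(x)=0$, so the difference quotients vanish there. Since every direction at the invertible $\mathbf A_\ast$ has the form $\mathbf B\mathbf A_\ast$, $\Phi$ is differentiable at $\mathbf A_\ast$ in every direction.

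Your route gives a self-contained argument that works directly with genuine functions in $V$ and shows exactly where the hypothesis is needed. The citation gives brevity. It also gives, beyond this proposition, the uniqueness of $\varrho_{\rm L}$ up to $\mu$-equivalence, which the paper takes from Lewis as well and uses in Remark~\ref{rem:sphere}. Your existence argument does not supply that uniqueness.
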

	\begin{proof}
		Since no nonzero element of $V$ vanishes $\mu$-almost everywhere, $V$ can be identified with its image in $L_1(\Omega,\mu)$, and every function in~\cite[Thm.~1]{Lewis78} has a unique representative in $V$.
		With this identification, let $f_1,\ldots,f_n\in V$ and $f=(\sum_{j=1}^n f_j^2)^{1/2}$ be the functions from~\cite[Thm.~1]{Lewis78} for $p=1$, and define $h_j\coloneqq \sqrt{n}f_j$, so that $\varrho_{\rm L}=f$.
		Let now $\mathbf{e}_1,\ldots,\mathbf{e}_n$ denote the unit vectors in $\R^n$.
		Inserting first $\mathbf x\coloneqq \mathbf e_j$ in~\cite[Thm.~1]{Lewis78}, and then $\mathbf x\coloneqq \mathbf{e}_j+\mathbf{e}_k$, yields
		\begin{align*}
			\int_{\{\varrho_{\rm L}>0\}}\frac{h_j^2}{\varrho_{\rm L}}\dint\mu = 1 \quad\text{and}\quad
			\int_{\{\varrho_{\rm L}>0\}}\frac{(h_j+h_k)^2}{\varrho_{\rm L}}\dint\mu = 2 .
		\end{align*}
		From this, the relation~\eqref{eq:lewisid} readily follows.
		Summing the relation over $j=k$ finally gives
		\begin{equation*}
			\int_\Omega\varrho_{\rm L}\dint\mu=\frac1n\sum_{j=1}^n\int_{\{\varrho_{\rm L}>0\}}\frac{h_j^2}{\varrho_{\rm L}}\dint\mu=1. \qedhere  
		\end{equation*}
	\end{proof}

    \noindent
	Note that all functions $f\in V$ vanish on $\{\varrho_{\rm L}=0\}$ by definition of $\varrho_{\rm L}$. 
	As a useful technical lemma, let us also record the following norm estimates.
	
	\begin{lemma}
		\label{lem:lewis}
		For $\mathbf{a}\coloneqq (a_1,\ldots,a_n)\in\R^n$, define $f_{\mathbf{a}}\coloneqq \sum_{j=1}^{n} a_jh_j \in V$.
		Then, it holds
		\begin{equation}
			\label{eq:lewisprops}
			\begin{gathered}
				\|f_{\mathbf{a}}\|_{L_1(\mu)}
				\le \|\mathbf{a}\|_2 
				\le \sqrt{n} \|f_{\mathbf{a}}\|_{L_1(\mu)}, \\
				|f_{\mathbf{a}}(x)|
				\le \|\mathbf{a}\|_2\sqrt{n} \varrho_{\rm L}(x)
				\le n \|f_{\mathbf{a}}\|_{L_1(\mu)} \varrho_{\rm L}(x).
			\end{gathered}
		\end{equation}
	\end{lemma}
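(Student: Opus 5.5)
The plan is to derive all four inequalities from the orthonormality relation~\eqref{eq:lewisid} combined with the pointwise Cauchy--Schwarz inequality. Throughout, recall that every $f\in V$ vanishes on $\{\varrho_{\rm L}=0\}$, so all integrals may be restricted to $\{\varrho_{\rm L}>0\}$.

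\emph{Pointwise bound.} By Cauchy--Schwarz in $\R^n$,
\[
|f_{\mathbf a}(x)|\le \|\mathbf a\|_2\Bigl(\sum_{j=1}^n h_j(x)^2\Bigr)^{1/2}=\|\mathbf a\|_2\sqrt n\,\varrho_{\rm L}(x),
\]
which is exactly the definition of $\varrho_{\rm L}$ in~\eqref{eq:lewisid}. The last inequality in the second line then follows by substituting the bound $\|\mathbf a\|_2\le\sqrt n\|f_{\mathbf a}\|_{L_1(\mu)}$ from the first line, so it suffices to prove the first line.

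\emph{Upper bound on the $L_1$ norm.} I write $|f_{\mathbf a}|=\bigl(|f_{\mathbf a}|/\sqrt{\varrho_{\rm L}}\bigr)\sqrt{\varrho_{\rm L}}$ on $\{\varrho_{\rm L}>0\}$ and apply Cauchy--Schwarz in $L_2(\mu)$:
\[
\|f_{\mathbf a}\|_{L_1}\le\Bigl(\int \frac{f_{\mathbf a}^2}{\varrho_{\rm L}}\dint\mu\Bigr)^{1/2}\Bigl(\int\varrho_{\rm L}\dint\mu\Bigr)^{1/2}=\|\mathbf a\|_2\cdot 1 .
\]
Here the first factor equals $\|\mathbf a\|_2$ by expanding $f_{\mathbf a}^2$ and using the orthonormality relation~\eqref{eq:lewisid}, and the second equals $1$ because $\varrho_{\rm L}$ is a probability density.

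\emph{Lower bound on the $L_1$ norm.} This is the step needing an idea. By orthonormality,
\[
\|\mathbf a\|_2^2=\int\frac{f_{\mathbf a}^2}{\varrho_{\rm L}}\dint\mu=\int |f_{\mathbf a}|\cdot\frac{|f_{\mathbf a}|}{\varrho_{\rm L}}\dint\mu .
\]
Inserting the pointwise bound $|f_{\mathbf a}|/\varrho_{\rm L}\le\sqrt n\|\mathbf a\|_2$ gives $\|\mathbf a\|_2^2\le\sqrt n\|\mathbf a\|_2\|f_{\mathbf a}\|_{L_1}$. Dividing by $\|\mathbf a\|_2$ (the case $\mathbf a=0$ being trivial) yields $\|\mathbf a\|_2\le\sqrt n\|f_{\mathbf a}\|_{L_1(\mu)}$, which completes the first line and hence the second.
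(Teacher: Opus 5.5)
Your proposal is correct and follows the paper's proof essentially line by line. You use Cauchy--Schwarz pointwise for $|f_{\mathbf a}(x)|\le\|\mathbf a\|_2\sqrt n\,\varrho_{\rm L}(x)$, H\"older with the splitting $|f_{\mathbf a}|=(|f_{\mathbf a}|/\sqrt{\varrho_{\rm L}})\sqrt{\varrho_{\rm L}}$ for $\|f_{\mathbf a}\|_{L_1(\mu)}\le\|\mathbf a\|_2$, and insertion of the pointwise bound into $\|\mathbf a\|_2^2=\int f_{\mathbf a}^2/\varrho_{\rm L}\dint\mu$ for the reverse inequality, with the last bound obtained by substitution exactly as the paper does.
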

	\begin{proof}
		Since all $f_{\mathbf{a}}\in V$ vanish on $\{\varrho_{\rm L}=0\}$ and by \eqref{eq:lewisid}, we have
		\begin{equation*}
			\|f_{\mathbf{a}}\|_{L_1(\mu)}=\int_{\{\varrho_{\rm L}>0\}} |f_{\mathbf{a}}|\dint\mu \quad\text{and}\quad
			\|\mathbf{a}\|_2^2=\int_{\{\varrho_{\rm L}>0\}} f_{\mathbf{a}}^2/\varrho_{\rm L} \dint\mu .
		\end{equation*}
		Thus, applying Hölder's inequality with $p=q=2$ to $\|f_{\mathbf{a}}\|_{L_1(\mu)} = \int_{\{\varrho_{\rm L}>0\}}(f_{\mathbf{a}}/\sqrt{\varrho_{\rm L}})\sqrt{\varrho_{\rm L}}\dint \mu$ gives $\|f_{\mathbf{a}}\|_{L_1(\mu)}\le\|\mathbf{a}\|_2$ since $\varrho_L$ is a probability density. 
		Applying the Hölder inequality to $\sum_{j=1}^{n} a_jh_j(x)$ yields $|f_{\mathbf{a}}(x)|\le\|\mathbf{a}\|_2(\sum_{j=1}^{n}h_j(x)^2)^{1/2}=\|\mathbf{a}\|_2\sqrt n\varrho_{\rm L}(x)$. 
		Consequently, we get $\|\mathbf{a}\|_2^2=\int_{\{\varrho_{\rm L}>0\}} f_{\mathbf{a}}^2/\varrho_{\rm L} \dint\mu\le\|\mathbf{a}\|_2\sqrt{n}\|f_{\mathbf{a}}\|_{L_1(\mu)}$, namely $\|\mathbf{a}\|_2\le\sqrt{n}\|f_{\mathbf{a}}\|_{L_1(\mu)}$.
		This also yields the last missing inequality in \eqref{eq:lewisprops} by insertion.
	\end{proof}
	
	\begin{remark}
    \label{rem:2.3}
		The $L_1$-Lewis density is the $L_1$-analogue of the Christoffel density $\varrho_C$, sometimes called $L_2$-Lewis density, associated to finite-dimensional subspaces $V\subset L_2(\Omega,\mu)$. For an orthonormal basis $(b_1,\ldots,b_n)$ of an $n$-dimensional real-valued space $V$, it is given by
		\begin{equation*}
			\varrho_C\coloneqq\frac{1}{n}\sum_{j=1}^{n} b_j^2
		\end{equation*}
		and usually differs from $\varrho_L$.
		For instance, if $V=\spann\{f\}$, Proposition~\ref{prop:lewis} implies that $\varrho_{\rm L}=|f|/\|f\|_{L_1(\mu)}$, whereas $\varrho_C=f^2/\|f\|_{L_2(\mu)}^2$.
	\end{remark}

    \noindent
	Let $h_1,\ldots,h_n$ and $\varrho_{\rm L}$ denote the Lewis basis and the Lewis density of $V$ from Proposition~\ref{prop:lewis}.
	Further, let $\varrho$ be any probability density with respect to $\mu$ such that
	\begin{equation}
		\label{eq:rho_admissible}
		\varrho_{\rm L}\le \gamma \varrho
	\end{equation}
	on $\Omega$ for some constant $\gamma\ge1$.
	Admissible choices are, e.g., $\varrho=\varrho_{\rm L}$ itself with $\gamma=1$, or mixtures $\varrho=\gamma^{-1}\varrho_{\rm L}+(1-\gamma^{-1})\tilde\varrho$ with an arbitrary probability density $\tilde\varrho$, such as the mixture density~\eqref{mix_density} in Section~\ref{sec:RKHS}.
	The goal of this section is to analyze quadrature rules $Q_{\mathbf{w}}(Y)$ on $V$ of the form
	\begin{equation*}
		Q_{\mathbf{w}}(Y) f\coloneqq  \sum_{i=1}^m w_i f(y_i), 
	\end{equation*}
	see~\eqref{eqdef:Q_{mathbf{w}}(X)}, with non-negative weights $\mathbf{w}=(w_1,\ldots,w_m)\ge0$, where the nodes $Y\coloneqq (y_1,\ldots,y_m)$ are drawn independently according to $\varrho\mu$.
	Note that, by~\eqref{eq:rho_admissible}, all $f\in V$ vanish on $\{\varrho=0\}\subset\{\varrho_{\rm L}=0\}$, and we use the convention that quotients by $\varrho$ are set to $0$ on $\{\varrho=0\}$.
	The main result is the subsequent theorem. It should be compared to~\cite[Thm.~10]{BeNaTu26}, which gives another probabilistic construction of Richter-Tchakaloff rules for $V$, however in our case a fixed log-oversampling of the nodes $Y$ with respect to the dimension of $V$ suffices.
	
	\begin{theorem}
		\label{thm:Prob_Tchak}
		Let $V\subset L_1(\Omega,\mu)$ be a space of real-valued functions on $\Omega$ with $\dim V=n$, and let $\varrho$ be a probability density satisfying~\eqref{eq:rho_admissible} for some $\gamma\ge1$.
		If $m\ge 67\,\gamma n$ nodes $Y=(y_1,\ldots,y_m)$ are drawn independently according to $\varrho\mu$, then, with probability at least
		\begin{equation*}
			%\label{eq:Prob_Tchak_prob}
			1-4n\exp\Bigl(-\frac{m}{134\,\gamma n}\Bigr),
		\end{equation*}
		there exist weights $\mathbf{w}=(w_1,\ldots,w_m)$ with
		\begin{equation*}
			\frac{1}{2m\varrho(y_i)}\le w_i\le\frac{3}{2m\varrho(y_i)},\qquad i=1,\ldots,m,
		\end{equation*}
		such that $Q_{\mathbf{w}}(Y)$ is a Richter-Tchakaloff rule for $V$, i.e., $Q_{\mathbf{w}}(Y)f=\int_\Omega f\dint\mu$ for all $f\in V$.
		If, in addition, $\mathbf{1}_{\Omega}\in V$ and $\mu(\Omega)=1$, then $\sum_{i=1}^m w_i=1$, where $\mathbf{1}_{\Omega}$ denotes the $1$-function on $\Omega$.
	\end{theorem}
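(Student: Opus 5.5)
Reduce existence of the weights to a duality (Farkas / Hahn–Banach) condition. Write $w_i=w_i^0(1+t_i)$ with $w_i^0=1/(m\varrho(y_i))$ and $t_i\in[-\tfrac12,\tfrac12]$. Exactness on $V$ amounts to $n$ linear equations $\sum_i w_i h_j(y_i)=\int h_j\dint\mu$, $j=1,\dots,n$. So we must show that the vector $\mathbf b\coloneqq (\int h_j\dint\mu)_j$ lies in the image of the box $\{\mathbf t\in[-\frac12,\frac12]^m\}$ under the affine map $\mathbf t\mapsto (\sum_i w_i^0(1+t_i)h_j(y_i))_j$. This image is a compact convex set (a zonotope). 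By the separating hyperplane theorem, $\mathbf b$ belongs to it if and only if for every $\mathbf a\in\R^n$, i.e.\ every $f=f_{\mathbf a}\in V$,
\begin{equation*}
\int_\Omega f\dint\mu-\frac1m\sum_{i=1}^m\frac{f(y_i)}{\varrho(y_i)}\le \frac{1}{2m}\sum_{i=1}^m\frac{|f(y_i)|}{\varrho(y_i)} .
\end{equation*}
The right-hand side is the support function of the box. By homogeneity and symmetry it therefore suffices to prove, with high probability and simultaneously for all $f\in V$, the two-sided estimates
\begin{equation*}
\Bigl|\int f\dint\mu-\frac1m\sum_i\frac{f(y_i)}{\varrho(y_i)}\Bigr|\le \tfrac13\|f\|_{L_1(\mu)},\qquad \frac1m\sum_i\frac{|f(y_i)|}{\varrho(y_i)}\ge \tfrac23\|f\|_{L_1(\mu)} .
\end{equation*}
These give $\text{LHS}\le\frac13\|f\|_1\le\frac12\cdot\frac23\|f\|_1\le$ RHS. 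The second bound is the $L_1$ lower discretization inequality; the first is a uniform quadrature error bound. This is the duality step in the spirit of \cite{MhNaWa01}. The final claim, $\sum_i w_i=1$ when $\mathbf 1_\Omega\in V$ and $\mu(\Omega)=1$, is immediate from exactness applied to $\mathbf 1_\Omega$.

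Both inequalities are derived from one matrix concentration statement in the Lewis basis. Set $\mathbf h(x)=(h_1(x),\dots,h_n(x))^\top$. Consider the random positive semidefinite matrices $\mathbf A_i=\mathbf h(y_i)\mathbf h(y_i)^\top/(\varrho(y_i)\varrho_{\rm L}(y_i))$. Their mean is $\E\mathbf A_i=\int \mathbf h\mathbf h^\top/\varrho_{\rm L}\dint\mu=\mathbf I_n$ by \eqref{eq:lewisid}, and $\|\mathbf A_i\|\le n\varrho_{\rm L}/\varrho\le\gamma n$. Matrix Chernoff then gives, with probability at least $1-2n\exp(-c\,m/(\gamma n))$,
\begin{equation*}
(1-\varepsilon)\mathbf I_n\preceq \frac1m\sum_i\mathbf A_i\preceq(1+\varepsilon)\mathbf I_n .
\end{equation*}
This controls the weighted $L_2$-type quantity $\frac1m\sum_i f(y_i)^2/(\varrho\varrho_{\rm L})(y_i)$ around $\|\mathbf a\|_2^2$. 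The plan is to pass from this to $L_1$ via Cauchy–Schwarz together with Lemma~\ref{lem:lewis}. For the quadrature error, one can instead apply a vector Bernstein inequality to $\frac1m\sum_i \mathbf h(y_i)/\varrho(y_i)-\int\mathbf h\dint\mu$. Bounding it in $\ell_2$ and using $\|\mathbf a\|_2\le\sqrt n\|f\|_1$ loses a factor $\sqrt n$, however.

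The main obstacle is the $L_1$ discretization with only $m\asymp\gamma n\log n$ points and explicit constants. The naive $\ell_2$-based bounds lose $\sqrt n$, and Talagrand-type chaining arguments lose extra logarithms. A more promising route handles the two inequalities jointly by a different matrix argument. Write $\frac1m\sum_i |f(y_i)|/\varrho(y_i)\ge \frac1m\sum_i f(y_i)^2/(\sqrt n\,\|\mathbf a\|_2\varrho_{\rm L}(y_i)\varrho(y_i))$, using $|f|\le\|\mathbf a\|_2\sqrt n\varrho_{\rm L}$. Apply the lower Chernoff bound to get $\ge(1-\varepsilon)\|\mathbf a\|_2/\sqrt n$, which again is comparable to $\|f\|_1$ only up to $\sqrt n$. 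Since this bound is too weak, I would try first to work with the functional $g\mapsto\int g\,|f|/\|\cdot\|$ per fixed $f$ and then perform a covering argument only over directions in the Lewis geometry. I expect the sharp version to require exploiting $f/\sqrt{\varrho_{\rm L}}$ jointly with sign patterns, and this is where I anticipate spending the most effort. The numerical constants $67$ and $134$ should come out of the chosen $\varepsilon$ in Chernoff and the requirement that $2n$ plus $2n$ failure terms give the $4n$ prefactor.
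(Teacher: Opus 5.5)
Your duality step is correct and coincides with the paper's Lemma~\ref{lem:box}. The integral vector lies in the image of the weight box if and only if $|\langle\mathbf a,\boldsymbol\varepsilon\rangle|\le\frac12\sum_i w_i^0|f_{\mathbf a}(y_i)|$ for all $\mathbf a$. Your pair of sufficient inequalities, with constants $\frac13$ and $\frac23$, plays the role of the paper's bounds $Z_{\rm sign}\le\frac14$ and $Z_{\rm abs}\le\frac12$ from Proposition~\ref{prop:conc}. Before using the Lewis basis you also need the reduction of Remark~\ref{rem:reduction} for nonzero $f\in V$ vanishing $\mu$-a.e.; this is minor.

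\textbf{The gap.} You never prove these two uniform estimates over the $L_1$-unit ball of $V$ with failure probability $4n\exp(-m/(134\gamma n))$. This is the entire probabilistic content of the theorem, and the routes you sketch do not deliver it:
\begin{itemize}
\item The matrix Chernoff bound for $\mathbf h\mathbf h^\top/(\varrho\varrho_{\rm L})$ controls $\|\mathbf a\|_2^2$. Passing to $L_1$ via Cauchy--Schwarz and Lemma~\ref{lem:lewis} costs a factor $\sqrt n$, independently of $\varepsilon$.
\item The vector Bernstein bound loses the same factor $\sqrt n$.
\item A per-function Bernstein bound plus a union bound over a net of the $n$-dimensional unit ball would need on the order of $\gamma n^2$ nodes.
\end{itemize}
Your final ``more promising route'' is a list of intentions, not an argument. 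Your dismissal of Talagrand-type arguments is also misplaced. The paper's proof is an adaptation of Talagrand's Rademacher-process approach for $L_1$, which uses no chaining and loses no logarithm.

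\textbf{The missing idea.}
\begin{itemize}
\item \emph{Symmetrization and contraction.} Symmetrization, plus the contraction principle for $t\mapsto|t|$ in the case of $Z_{\rm abs}$, bounds $\E Z_\bullet^p$ by $2^p$ (resp.\ $4^p$) times $\E\,s_B(\mathbf z)^p$. Here $s_B$ is the support function of $B=\{\mathbf a:\|f_{\mathbf a}\|_{L_1(\mu)}\le1\}$ and $\mathbf z=\frac1m\sum_i\epsilon_i\mathbf h(y_i)/\varrho(y_i)$ with Rademacher signs $\epsilon_i$.
\item \emph{Deterministic key estimate (Lemma~\ref{lem:trunc}).} With $\mathbf u=\mathbf h/\varrho_{\rm L}$, one has $s_B(\mathbf z)^p\le n\int_\Omega|\langle\mathbf z,\mathbf u(x)\rangle|^p\varrho_{\rm L}(x)\dint\mu(x)$. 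This follows from $\langle\mathbf a,\mathbf z\rangle=\int_\Omega(f_{\mathbf a}/\varrho_{\rm L})\langle\mathbf z,\mathbf u\rangle\varrho_{\rm L}\dint\mu$ and H\"older's inequality. For $\mathbf a\in B$, the bounds $|f_{\mathbf a}/\varrho_{\rm L}|\le n$ and $\|f_{\mathbf a}/\varrho_{\rm L}\|_{L_1(\varrho_{\rm L}\mu)}\le1$ interpolate to $\|f_{\mathbf a}/\varrho_{\rm L}\|_{L_q(\varrho_{\rm L}\mu)}\le n^{1/p}$.
\item \emph{Why it helps.} The supremum over the body becomes an average over $x$ of scalar random variables. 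The price is a factor $n$, which turns into a harmless $n^{1/p}$ after taking $p$-th roots.
\item \emph{Scalar Bernstein step.} For fixed $x$, $\langle\mathbf z,\mathbf u(x)\rangle$ is a sum of independent centered terms bounded by $\gamma n/m$, with total variance at most $\gamma n/m$. A Bernstein moment bound gives $\E|\langle\mathbf z,\mathbf u(x)\rangle|^p\le2(\ee^{-11/23}\sqrt{p\gamma n/m})^p$ for $1\le p\le m/(64\gamma n)$.
\item \emph{Moments to tails.} The moments-to-tails principle with $\beta=2n$, applied to both processes and combined by a union bound, yields probability $1-4n\exp(-m/(134\gamma n))$.
\end{itemize}
So the prefactor $4n$ comes from the factor $n$ in Lemma~\ref{lem:trunc}, the two-sided Bernstein bound, and the two processes. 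It does not come from a matrix Chernoff inequality.
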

	
	\begin{corollary}
		\label{cor:Prob_Tchak}
		In the setting of Theorem~\ref{thm:Prob_Tchak}, the quadrature $Q_{\mathbf{w}}(Y)$ is a Richter-Tchakaloff rule for $V$ with probability at least $1-\delta$, with a free parameter $\delta\in(0,1)$, if 
		\begin{equation*}
			m\ge 134 \gamma n \log\bigl(\frac{4n}{\delta}\bigr).
		\end{equation*}
		In particular, if sampling from the Lewis density, where $\gamma=1$, drawing $m\ge 134 n\log(40n)$ nodes yields a Richter-Tchakaloff rule with probability at least $90\%$.
	\end{corollary}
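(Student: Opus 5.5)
The corollary follows from Theorem~\ref{thm:Prob_Tchak} by solving its failure probability for $m$; no new probabilistic argument is needed. Assume
\begin{equation*}
m\ge 134\gamma n\log(4n/\delta).
\end{equation*}
First I check that the theorem's hypothesis $m\ge 67\gamma n$ holds. Since $n\ge1$ and $\delta\in(0,1)$, we have $4n/\delta>4$, so $\log(4n/\delta)>\log 4>1/2$. Hence $m\ge 134\gamma n\cdot\tfrac12=67\gamma n$, and Theorem~\ref{thm:Prob_Tchak} applies.

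Next I bound the failure probability. The theorem gives failure probability at most $4n\exp(-m/(134\gamma n))$. Because $m/(134\gamma n)\ge\log(4n/\delta)$ and $t\mapsto e^{-t}$ is decreasing,
\begin{equation*}
4n\exp\Bigl(-\frac{m}{134\gamma n}\Bigr)\le 4n\cdot\frac{\delta}{4n}=\delta .
\end{equation*}
So with probability at least $1-\delta$ the weights from Theorem~\ref{thm:Prob_Tchak} exist and satisfy the box constraints $\frac{1}{2m\varrho(y_i)}\le w_i\le\frac{3}{2m\varrho(y_i)}$. The resulting rule $Q_{\mathbf w}(Y)$ is exact on $V$, i.e.\ a Richter-Tchakaloff rule. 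If in addition $\mathbf 1_\Omega\in V$ and $\mu(\Omega)=1$, the theorem also gives $\sum_i w_i=1$.

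For the special case I sample from the Lewis density, $\varrho=\varrho_{\rm L}$, which satisfies~\eqref{eq:rho_admissible} with $\gamma=1$. Taking $\delta=1/10$ gives $4n/\delta=40n$. So $m\ge134n\log(40n)$ yields success probability at least $90\%$.

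The only point that needs attention is verifying the side condition $m\ge67\gamma n$, which uses $\log(4n/\delta)\ge1/2$.
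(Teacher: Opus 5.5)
Your proposal is correct and is the argument the paper intends; the paper states the corollary without a separate proof. As you do, one solves $4n\exp(-m/(134\gamma n))\le\delta$ for $m$, notes that the side condition $m\ge 67\gamma n$ holds automatically because $\log(4n/\delta)>\log 4>1/2$, and takes $\delta=1/10$ with $\gamma=1$ to get the $134\,n\log(40n)$ case.
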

	
	\begin{remark}[Almost spherical designs]
		\label{rem:sphere}
		Let $\mu$ be the normalized surface measure on $\mathbb{S}^d\subset\R^{d+1}$ and $V=\Pi_t$ the space of spherical polynomials of degree at most $t$, so that $n=\dim\Pi_t\asymp_d t^d$.
		Since $V$ and $\mu$ are invariant under rotations and the $L_1$-Lewis density is unique up to $\mu$-equivalence, see ~\cite[Thm.\ 1]{Lewis78}, $\varrho_{\rm L}$ is rotation invariant and hence $\varrho_{\rm L}\equiv 1$ (up to $\mu$-equivalence).
		Hence, Corollary~\ref{cor:Prob_Tchak} applies with $\varrho\equiv1$ and $\gamma=1$: if $m\ge134\,n\log(4n/\delta)\asymp_d t^d\log(t/\delta)$ nodes are drawn independently and uniformly on $\mathbb{S}^d$, then, with probability at least $1-\delta$, there are weights
		\begin{equation*}
			\frac{1}{2m}\le w_i\le\frac{3}{2m},\qquad \sum_{i=1}^m w_i=1,
		\end{equation*}
		such that $Q_{\mathbf w}(Y)$ is exact on $\Pi_t$.
		By Corollary~\ref{cor:near_equal}, $m\ge 34\tau^{-2}n\log(4n/\delta)$ nodes even admit weights in $[\frac{1-\tau}{m},\frac{1+\tau}{m}]$ for $\tau\in(0,\frac12]$, i.e., the random points form an almost spherical $t$-design.
	\end{remark}
	
	\noindent
	The proof of Theorem~\ref{thm:Prob_Tchak} consists of three steps. 
	First, we reduce the setting to the case where no nonzero element of $V$ vanishes $\mu$-almost everywhere (Remark~\ref{rem:reduction}). This is necessary to make Proposition~\ref{prop:lewis} applicable, namely in case
	\begin{equation}
		\label{eqdef:Nullspace}
		N\coloneqq \{f\in V: f=0\ \mu\text{-a.e.}\}\neq\{0\}.
	\end{equation}
	Second, we show that the rule 
	\begin{equation}
		\label{eqdef:ini_rule}
		Q^0(Y) f\coloneqq  \sum_{i=1}^m w_i^0f(y_i), \qquad w_i^0 \coloneqq  \frac{1}{m\varrho(y_i)},
	\end{equation} 
	approximates both $\int_\Omega f\dint\mu$ and $\int_\Omega|f|\dint\mu$ uniformly over $f\in V$ with $\|f\|_{L_1(\mu)}\le1$, up to errors of $\frac14$ and $\frac12$, respectively (Proposition~\ref{prop:conc}). 
	This rests on an $L_p$-bound for the support function of the $L_1$-coefficient body (Lemma~\ref{lem:trunc}) and on moment estimates for the suprema of the associated empirical processes (Lemma~\ref{lem:moments}).
	Third, we adapt the weights within the box $[\frac12 w_i^0,\frac32 w_i^0]$ so that the rule becomes exact on $V$ (Lemma~\ref{lem:box}). Note that, since $\varrho(y_i)>0$ almost surely, the weights $w_i^0$ in~\eqref{eqdef:ini_rule} are almost surely well-defined.

\begin{remark}[Reduction in case $N\neq\{0\}$]\label{rem:reduction}
If $N\neq\{0\}$, see~\eqref{eqdef:Nullspace}, choose a complement $W$ with $V=W\oplus N$,
so that $\dim W<n$ and no nonzero element of $W$ vanishes $\mu$-almost everywhere.
Then the Lewis basis and density of $V$ are understood as those of $W$, with
$\varrho_L\coloneqq0$ if $W=\{0\}$. This is justified as follows. The set
$\Omega_N\coloneqq\bigcup_{k\le\ell}\{b_k\neq0\}$, where $b_1,\dots,b_\ell$ is a basis
of $N$, is a $\mu$-null set, so $Y\cap\Omega_N=\emptyset$ almost surely. On this event,
$Q_{\mathbf w}(Y)f=0=\int_\Omega f\dint\mu$ for all $f\in N$ and all weights $\mathbf w$,
so every rule that is exact on $W$ is exact on $V$. Since the condition on $m$ and the
failure probability only improve when $n$ decreases, it suffices to treat $W$; for
$W=\{0\}$, any rule $Q^0(Y)$ is almost surely exact on $V$. Hence, we assume from now
on that no nonzero element of $V$ vanishes $\mu$-almost everywhere.
\end{remark}
    
	\subsection{An \texorpdfstring{$L_p$}{Lp}-bound for the \texorpdfstring{$L_1$}{L1}-coefficient body}
	For the proof of Theorem \ref{thm:Prob_Tchak}, we define $B_1(V)\coloneqq \{f_{\mathbf{a}}:\mathbf{a}\in B\}$ based on the $L_1$-coefficient body
	\begin{align}
		\label{eqdef:coeff_body}
		B\coloneqq \bigl\{\mathbf{a}\in\R^n:\|f_{\mathbf{a}}\|_{L_1(\mu)}\le1\bigr\},
	\end{align}
    where $f_{\mathbf{a}}$ is the linear combination  from Lemma \ref{lem:lewis}. The key tool for analyzing the rules $Q^0(Y)$ is a deterministic estimate for the support function
	\begin{equation}
		\label{eqdef:s_B(mathbf{z})}
		s_B(\mathbf{z})\coloneqq \sup_{\mathbf{a}\in B}\langle \mathbf{a},\mathbf{z}\rangle=\sup_{\mathbf{a}\in B}|\langle \mathbf{a},\mathbf{z}\rangle| ,\quad \mathbf{z}\in\R^n,
	\end{equation}
	where the second equality is due to the symmetry of $B$.
	For every $x\in\Omega$ with $\varrho_{\rm L}(x)>0$, let
	\begin{equation}
		\label{eqdef:mathbf{u}(x)}
		\mathbf{u}(x)\coloneqq \big(h_j(x)/\varrho_{\rm L}(x)\big)^{\top}_{j\le n}\in\R^n,
	\end{equation}
	so that $\|\mathbf{u}(x)\|_2^2=n$ by~\eqref{eq:lewisid}.
	For $\mathbf{z}\in\R^n$, we then define the function
	\begin{equation}
		\label{eqdef:G_{mathbf{z}}}
		G_{\mathbf{z}}(x)\coloneqq \begin{cases}
			\langle \mathbf{z},\mathbf{u}(x)\rangle &\text{if }\varrho_{\rm L}(x)>0,\\
			0 &\text{if }\varrho_{\rm L}(x)=0.
		\end{cases}
	\end{equation}
	
	\begin{lemma}
		\label{lem:trunc}
		For every $\mathbf{z}\in\R^n$ and every $p\in[1,\infty)$, it holds that
		\begin{equation}\label{eq:trunc}
			s_B(\mathbf{z})^p
			\le
			n\int_\Omega |G_{\mathbf{z}}|^p \varrho_{\rm L}\dint\mu.
		\end{equation}
	\end{lemma}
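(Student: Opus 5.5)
The plan is to represent the pairing $\langle\mathbf a,\mathbf z\rangle$ as an integral against $|f_{\mathbf a}|\dint\mu$, apply Jensen's inequality, and then bound $|f_{\mathbf a}|$ pointwise by $n\varrho_{\rm L}$ using Lemma~\ref{lem:lewis}. By Remark~\ref{rem:reduction} we may assume Proposition~\ref{prop:lewis} applies.

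\emph{Step 1: integral representation of the pairing.} Fix $\mathbf a\in B$ and $\mathbf z\in\R^n$. By bilinearity and the orthogonality relation~\eqref{eq:lewisid},
\begin{equation*}
\langle \mathbf a,\mathbf z\rangle=\sum_{j,k} a_j z_k\,\delta_{jk}=\int_{\{\varrho_{\rm L}>0\}}\frac{f_{\mathbf a}\,f_{\mathbf z}}{\varrho_{\rm L}}\dint\mu .
\end{equation*}
On $\{\varrho_{\rm L}>0\}$ we have $f_{\mathbf z}/\varrho_{\rm L}=\sum_j z_j h_j/\varrho_{\rm L}=\langle\mathbf z,\mathbf u(x)\rangle=G_{\mathbf z}$. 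Since $f_{\mathbf a}$ vanishes on $\{\varrho_{\rm L}=0\}$, this gives
\begin{equation*}
\langle \mathbf a,\mathbf z\rangle=\int_\Omega f_{\mathbf a}G_{\mathbf z}\dint\mu ,
\qquad\text{hence}\qquad
|\langle \mathbf a,\mathbf z\rangle|\le\int_\Omega |G_{\mathbf z}|\,|f_{\mathbf a}|\dint\mu .
\end{equation*}

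\emph{Step 2: Jensen/H\"older.} Let $\nu\coloneqq|f_{\mathbf a}|\dint\mu$, which has total mass $\|f_{\mathbf a}\|_{L_1(\mu)}\le1$. If $\nu=0$, there is nothing to prove. Otherwise, Jensen's inequality for the normalized measure (equivalently, H\"older with exponents $p$ and $p'$) yields
\begin{equation*}
\Bigl(\int_\Omega|G_{\mathbf z}|\dint\nu\Bigr)^p\le\|f_{\mathbf a}\|_{L_1(\mu)}^{p-1}\int_\Omega|G_{\mathbf z}|^p|f_{\mathbf a}|\dint\mu\le\int_\Omega|G_{\mathbf z}|^p|f_{\mathbf a}|\dint\mu .
\end{equation*}

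\emph{Step 3: pointwise Lewis bound.} By the second line of~\eqref{eq:lewisprops} in Lemma~\ref{lem:lewis}, $|f_{\mathbf a}(x)|\le n\|f_{\mathbf a}\|_{L_1(\mu)}\varrho_{\rm L}(x)\le n\varrho_{\rm L}(x)$. Inserting this gives
\begin{equation*}
|\langle\mathbf a,\mathbf z\rangle|^p\le n\int_\Omega|G_{\mathbf z}|^p\varrho_{\rm L}\dint\mu .
\end{equation*}
Taking the supremum over $\mathbf a\in B$ and using~\eqref{eqdef:s_B(mathbf{z})} yields~\eqref{eq:trunc}.

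I do not expect a genuine obstacle. The only step requiring care is Step 1: one must observe that the Lewis basis turns the Euclidean pairing into the $L_2(\varrho_{\rm L}^{-1}\mu)$ pairing, and that the quotient $f_{\mathbf z}/\varrho_{\rm L}$ is exactly $G_{\mathbf z}$ with the stated convention on $\{\varrho_{\rm L}=0\}$.
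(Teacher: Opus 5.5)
Your proof is correct and follows essentially the paper's argument: both start from $\langle\mathbf a,\mathbf z\rangle=\int_\Omega f_{\mathbf a}G_{\mathbf z}\,d\mu$ and combine H\"older's inequality with $\|f_{\mathbf a}\|_{L_1(\mu)}\le 1$ and the pointwise bound $|f_{\mathbf a}|\le n\varrho_{\rm L}$. The only difference is bookkeeping. The paper first bounds $\|f_{\mathbf a}/\varrho_{\rm L}\|_{L_q(\varrho_{\rm L}\mu)}\le n^{1/p}$ and then applies H\"older with respect to $\varrho_{\rm L}\mu$, whereas your Jensen step with respect to $|f_{\mathbf a}|\,d\mu$ uses the same two bounds in the opposite order and treats $p=1$ and $p>1$ in one go.
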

	\begin{proof}
		By~\eqref{eq:lewisid}, it holds for every $\mathbf{a}\in\R^n$ that
		\begin{equation*}
        %\label{eq:SBz}
			\int_\Omega f_{\mathbf{a}} G_{\mathbf{z}}\dint\mu=\sum_{j,k=1}^n a_jz_k\int_{\{\varrho_{\rm L}>0\}}\frac{h_jh_k}{\varrho_{\rm L}}\dint\mu
			=\langle \mathbf{a},\mathbf{z}\rangle.
		\end{equation*}
		Set $\varphi_{\mathbf a}\coloneqq f_{\mathbf a}/\varrho_{\rm L}$ on $\{\varrho_{\rm L}>0\}$ and $\varphi_{\mathbf a}\coloneqq0$ otherwise, so that $\langle\mathbf a,\mathbf z\rangle=\int_\Omega\varphi_{\mathbf a}G_{\mathbf z} \varrho_{\rm L}\dint\mu$.
		For $\mathbf a\in B$, \eqref{eq:lewisprops} gives $|\varphi_{\mathbf a}|\le n\|f_{\mathbf a}\|_{L_1(\mu)}\le n$.
		For $p=1$, the claim follows directly from $|\varphi_{\mathbf a}|\le n$.
		For $p>1$, let $q\coloneqq p/(p-1)$.
		Then, it holds
		\begin{equation*}
			\int_\Omega|\varphi_{\mathbf a}|^q\varrho_{\rm L}\dint\mu\le n^{q-1}\int_\Omega|\varphi_{\mathbf a}|\varrho_{\rm L}\dint\mu = n^{q-1} \|f_{\mathbf a}\|_{L_1(\mu)}\le n^{q-1},
		\end{equation*}
		which implies $\|\varphi_{\mathbf a}\|_{L_q(\varrho_{\rm L}\mu)}\le n^{1-1/q}=n^{1/p}$.
		Now, Hölder's inequality with respect to the probability measure $\varrho_{\rm L}\mu$ yields
		\begin{equation*}
			|\langle\mathbf a,\mathbf z\rangle| = \biggl|\int_\Omega\varphi_{\mathbf a}G_{\mathbf z} \varrho_{\rm L}\dint\mu \biggr|\le n^{1/p} \|G_{\mathbf z}\|_{L_p(\varrho_{\rm L}\mu)}
		\end{equation*}
		for all $\mathbf a\in B$.
		Taking the $p$-th power and then the supremum over $\mathbf a\in B$ gives~\eqref{eq:trunc}.
	\end{proof}
	
	\subsection{Quadrature error and simultaneous \texorpdfstring{$L_1$}{L1}-norm discretization}
    %\label{ssec:RelConc}
    
	For fixed $m\in\N$ and $f \in B_1(V)$, we analyze the two centered stochastic processes 
	\begin{align*}
		E_{\rm sign}[f]\coloneqq \sum_{i=1}^m w_i^0f(y_i)-\int_\Omega f\dint\mu,\qquad 
		E_{\rm abs}[f] \coloneqq  \sum_{i=1}^m w_i^0|f(y_i)|-\int_\Omega |f|\dint\mu,
	\end{align*}
	where the $y_i$, $i=1,\ldots,m$, are drawn independently according to $\varrho \mu$.
	Those can be reformulated in terms of $f_\mathbf{a} =\sum_{j=1}^{n} a_jh_j$ and the coefficients $\mathbf{a}\in B$, see~\eqref{eqdef:coeff_body}, as
	\begin{align}
		E_{\rm sign}[\mathbf a]
		&= \sum_{i=1}^m \frac{f_{\mathbf{a}}(y_i)}{m\varrho(y_i)}-\int_\Omega f_{\mathbf{a}}\dint\mu =  \sum_{i=1}^m \frac{1}{m} \biggl(\frac{f_{\mathbf{a}}(y_i)}{\varrho(y_i)}-\int_\Omega f_{\mathbf{a}}\dint\mu\biggr), \label{eq:ProccesSign}\\
		E_{\rm abs}[\mathbf a]
		&= \sum_{i=1}^m \frac{|f_{\mathbf{a}}(y_i)|}{m\varrho(y_i)}-\int_\Omega |f_{\mathbf{a}}|\dint\mu =  \sum_{i=1}^m \frac{1}{m} \biggl(\frac{|f_{\mathbf{a}}(y_i)|}{\varrho(y_i)}-\int_\Omega |f_{\mathbf{a}}|\dint\mu\biggr).\label{eq:ProccesAbs}
	\end{align}

    \noindent
	Our aim is to probabilistically control the suprema of these processes, namely 
	\begin{equation*}
    %\label{eqdef:Z}
		Z_{\rm sign}\coloneqq \sup_{f\in B_1(V)}\biggl|\sum_{i=1}^mw_i^0f(y_i)-\int_\Omega f\dint\mu\biggr|,
		\quad
		Z_{\rm abs}\coloneqq \sup_{f\in B_1(V)}\biggl|\sum_{i=1}^mw_i^0|f(y_i)|-\int_\Omega |f|\dint\mu\biggr|.
	\end{equation*}
	To handle $Z_{\rm sign}$ and $Z_{\rm abs}$ simultaneously, we henceforth use $Z_{\bullet}$ to refer to both variables, $Z_{\rm sign}$ and $Z_{\rm abs}$, at the same time.
	Using the equivalent formulations \eqref{eq:ProccesSign} and \eqref{eq:ProccesAbs} and defining $\tilde{f}_{\mathbf{a}}\coloneqq f_{\mathbf{a}}/\varrho$, we can write 
	\begin{equation}\label{eqdef:Z_reform}
		Z_{\rm sign}=\sup_{\mathbf{a}\in B}\bigg|\frac1m\sum_{i=1}^m \tilde{f}_{\mathbf{a}}(y_i)-\int_\Omega \tilde{f}_{\mathbf{a}}\varrho\dint\mu\bigg|, \qquad
		Z_{\rm abs}=\sup_{\mathbf{a}\in B}\biggl|\frac1m\sum_{i=1}^m |\tilde{f}_{\mathbf{a}}(y_i)| -\int_\Omega |\tilde{f}_{\mathbf{a}}| \varrho\dint\mu\biggr|.
	\end{equation}
	Since $\mathbf a\mapsto\tilde f_{\mathbf a}(y)$ and $\mathbf a\mapsto\int_\Omega f_{\mathbf a}\dint\mu$ are linear and $\mathbf a\mapsto\|f_{\mathbf a}\|_{L_1(\mu)}$ is $1$-Lipschitz by~\eqref{eq:lewisprops}, all processes indexed by $\mathbf a\in B$ in this subsection are continuous in $\mathbf a$.
	Hence, their suprema over $B$ coincide with those over the countable dense set $B\cap\Q^n$, so that the $Z_\bullet$ are measurable.
	Finally, for each $y\in\Omega$, define the column vectors 
	\begin{align}\label{eq:DefH}
		\mathbf{h}(y)\coloneqq (h_j(y))^{\top}_{j\le n}\in\R^n,\qquad  \tilde{\mathbf{h}}(y)\coloneqq \mathbf{h}(y)/\varrho(y)\in\R^n .
	\end{align} 
	Based on \eqref{eq:lewisid} and \eqref{eq:rho_admissible}, we obtain
	\begin{equation}\label{eq:featbounds}
		\|\tilde{\mathbf{h}}(y)\|_2=\frac{\sqrt{n}\varrho_{\rm L}(y)}{\varrho(y)}\le \gamma\sqrt{n},
		\qquad
		\E_{\mathbf{y}}\big[\tilde{\mathbf{h}}(y)\tilde{\mathbf{h}}^{\top}(y)\big]=\int_\Omega\frac{\mathbf{h}\mathbf{h}^{\top}}{\varrho}\dint\mu\preceq\gamma \mathbf{I}_n.
    \end{equation}
	Lemma~\ref{lem:moments} below follows from the Rademacher-process approach of
\cite[Sect.~15.5]{LedouxTalagrand} and \cite{Talagrand90}. Symmetrization and the
contraction principle, see \cite[Lem.~6.3, Thm.~4.12]{LedouxTalagrand} and the appendix here, reduce the problem to a Rademacher average, cf.\ \cite[Prop.~15.16]{LedouxTalagrand}. We bound all moments of this average via the Lewis
basis and a Bernstein-type moment inequality, see Lemma~\ref{lem:bernmom}, which yields the
i.i.d.\ sampling statement with arbitrary failure probability via the ``moments to tails'' Proposition \ref{propA:MomTail}.
	\begin{lemma}[Moments]
		\label{lem:moments}
		Let $m\in\N$.
		For every $p\in[1,m/(64\gamma n)]$, it holds that
		\begin{equation}\label{eq:moments}
			\bigl(\E_{\mathbf y}Z_\bullet^p\bigr)^{1/p}\le (2n)^{1/p}c_\bullet {\ee}^{-11/23}\sqrt{\frac{p\gamma n}{m}},
			\qquad
			\bullet\in\{\mathrm{sign},\mathrm{abs}\},
		\end{equation}
		where $c_{\rm sign}\coloneqq2$ and $c_{\rm abs}\coloneqq4$.
	\end{lemma}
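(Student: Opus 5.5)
The plan is the classical Rademacher-process route of Ledoux–Talagrand and Talagrand for $L_1$-subspaces. The Lewis basis enters through Lemma~\ref{lem:trunc}, and the moment bound comes from a Bernstein-type matrix inequality.

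\emph{Step 1 (symmetrization and contraction).} Let $\varepsilon_1,\dots,\varepsilon_m$ be i.i.d.\ Rademacher signs, independent of $Y$. By the symmetrization lemma \cite[Lem.~6.3]{LedouxTalagrand}, applied with the convex increasing function $t\mapsto t^p$, we get
\[
\E_{\mathbf y} Z_\bullet^p\le 2^p\,\E_{\mathbf y}\E_\varepsilon \sup_{\mathbf a\in B}\Bigl|\frac1m\sum_{i=1}^m\varepsilon_i\phi_\bullet(\tilde f_{\mathbf a}(y_i))\Bigr|^p ,
\]
where $\phi_{\rm sign}(t)=t$ and $\phi_{\rm abs}(t)=|t|$. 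For $\bullet={\rm abs}$, $\phi$ is a $1$-Lipschitz contraction with $\phi(0)=0$. The contraction principle \cite[Thm.~4.12]{LedouxTalagrand} removes it at the cost of a further factor $2^p$. This accounts for $c_{\rm abs}=2c_{\rm sign}$.

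Since $\tilde f_{\mathbf a}(y_i)=\langle\mathbf a,\tilde{\mathbf h}(y_i)\rangle$, the remaining supremum equals
\[
\frac1m\, s_B(\mathbf z), \qquad \mathbf z\coloneqq\sum_i\varepsilon_i\tilde{\mathbf h}(y_i).
\]

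\emph{Step 2 (reduction via Lemma~\ref{lem:trunc}).} Apply \eqref{eq:trunc} with the same exponent $p$ and then Fubini:
\[
\E\, s_B(\mathbf z)^p\le n\int_\Omega \E\Bigl|\sum_{i=1}^m\varepsilon_i\langle\tilde{\mathbf h}(y_i),\mathbf u(x)\rangle\Bigr|^p\varrho_{\rm L}(x)\dint\mu(x).
\]
For fixed $x$, the summands $\xi_i\coloneqq\varepsilon_i\langle\tilde{\mathbf h}(y_i),\mathbf u(x)\rangle$ are i.i.d., centered and symmetric. By \eqref{eq:featbounds}, Cauchy–Schwarz, and $\|\mathbf u(x)\|_2^2=n$, they satisfy $|\xi_i|\le\gamma n$ and $\E\xi_i^2=\mathbf u(x)^\top\E[\tilde{\mathbf h}\tilde{\mathbf h}^\top]\mathbf u(x)\le\gamma n$.

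\emph{Step 3 (Bernstein moment bound).} Lemma~\ref{lem:bernmom} controls the $p$-th moment of $\sum_i\xi_i$ by a combination of the form
\[
C\bigl(\sqrt{p\,m\gamma n}+p\gamma n\bigr).
\]
Under the restriction $p\le m/(64\gamma n)$, the linear term is dominated: $p\gamma n\le \tfrac18\sqrt{p m\gamma n}$. So the bound is essentially $C'\sqrt{pm\gamma n}$, uniformly in $x$. Integrating against the probability density $\varrho_{\rm L}$ leaves the prefactor $n$.

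Taking $p$-th roots gives
\[
(\E Z_\bullet^p)^{1/p}\le 2^{\#}\, n^{1/p}\,\frac{C'\sqrt{pm\gamma n}}{m}.
\]
The factor $2^{1/p}$ in $(2n)^{1/p}$ presumably comes from the symmetric two-sided tail, or from the precise form of Lemma~\ref{lem:bernmom}.

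\emph{Main obstacle.} The main difficulty is bookkeeping the explicit constant ${\ee}^{-11/23}$. It requires the exact form of the Bernstein moment inequality and an optimized use of $p\le m/(64\gamma n)$ to absorb the $p\gamma n$ term. I would first derive the bound with an unspecified absolute constant, then match it against the precise statement of Lemma~\ref{lem:bernmom}, choosing the splitting of the $2^p$ factors to land on $2{\ee}^{-11/23}$ and $4{\ee}^{-11/23}$.

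A secondary technical point is measurability. It is handled via the countable dense set $B\cap\Q^n$ noted above, and the convention that $\tilde{\mathbf h}=0$ on $\{\varrho=0\}$ keeps all quantities well defined.
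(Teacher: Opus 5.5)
Your proposal follows the paper's proof essentially step by step: symmetrization gives $c_{\rm sign}=2$, the contraction principle gives $c_{\rm abs}=2c_{\rm sign}=4$, and then Lemma~\ref{lem:trunc} with Tonelli is followed by Lemma~\ref{lem:bernmom} applied uniformly in $x$. The constant-matching you flag as the main obstacle is immediate: Lemma~\ref{lem:bernmom} with $L=\gamma n$, $v=m\gamma n$ (equivalently the normalized $L=v=\gamma n/m$) has hypothesis $L\sqrt{p/v}\le\frac18$ exactly equivalent to $p\le m/(64\gamma n)$, and it directly yields $2\bigl(\ee^{-11/23}\sqrt{pm\gamma n}\bigr)^p$, whose leading factor $2$ (from bounding $\E\,\ee^{\theta|Z|}$ by $\E\,\ee^{\theta Z}+\E\,\ee^{-\theta Z}$) is the $2$ in $(2n)^{1/p}$.
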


    \noindent
	The factor $(2n)^{1/p}$ is the price for the uniformity over $B$, and it is bounded by an absolute constant as soon as $p\gtrsim\log n$.
	Combined with Markov's inequality in the form of Proposition~\ref{propA:MomTail}, the $p$-th moments also provide the dependence on the failure probability, see the proof of Proposition~\ref{prop:conc} below.
	Bounding all moments of a supremum, absorbing a factor $n^{1/p}$ by the choice $p\gtrsim\log n$, and passing to tail bounds is a classical technique, see e.g.~\cite{Ru99}, \cite[Prop.~7.11]{FoRa13}, as well as~\cite[Cor.~3.2, Prop.~3.6, and Prop.~3.8]{MoUl20}.
	
	\begin{proof}
		\emph{Step 1: We derive the symmetrization bound~\eqref{eq:symm}.}
		For $\mathbf a\in B$, let $g_{\mathbf a}\coloneqq\tilde f_{\mathbf a}$ if $\bullet=\mathrm{sign}$, and $g_{\mathbf a}\coloneqq|\tilde f_{\mathbf a}|$ if $\bullet=\mathrm{abs}$.
		By~\eqref{eqdef:Z_reform}, we have $Z_\bullet=\sup_{\mathbf a\in B}|\frac1m\sum_{i=1}^m g_{\mathbf a}(y_i)-\int_\Omega g_{\mathbf a}\varrho\dint\mu|$, where the supremum may be taken over the countable set $B\cap\Q^n$.
		For a finite set $D\subset B\cap\Q^n$, we apply the symmetrization Theorem~\ref{thmA:Symm} to the independent random vectors $(g_{\mathbf a}(y_i)/m)_{\mathbf a\in D}\in\R^{D}$, $i=1,\ldots,m$, where $\R^D$ is equipped with the maximum norm.
		Letting $D\uparrow B\cap\Q^n$ and using monotone convergence, we obtain
		\begin{equation}\label{eq:symm}
			\E_{\mathbf y}Z_\bullet^p
			\le2^p \E_{\mathbf y,\boldsymbol\epsilon}\sup_{\mathbf a\in B}\biggl|\frac1m\sum_{i=1}^m\epsilon_i g_{\mathbf a}(y_i)\biggr|^p ,
		\end{equation}
		where $\boldsymbol\epsilon=(\epsilon_1,\ldots,\epsilon_m)$ is a vector of independent Rademacher variables, independent of $\mathbf y$, see also~\cite[Lem.~6.3]{LedouxTalagrand} and~\cite[Prop.~3.7]{MoUl20}.
		
		\noindent\emph{Step 2: We reduce the estimate in~\eqref{eq:symm} to the support function in~\eqref{eq:reduction}.}
		Let us now insert $\mathbf z\coloneqq\frac1m\sum_{i=1}^m\epsilon_i\tilde{\mathbf h}(y_i)$ into~\eqref{eqdef:s_B(mathbf{z})}, where $\tilde{\mathbf h}(y_i)$ is defined according to \eqref{eq:DefH}.
		Since $\tilde f_{\mathbf a}(y_i)=\langle\mathbf a,\tilde{\mathbf h}(y_i)\rangle$, the right-hand side of~\eqref{eq:symm} then equals $2^p \E_{\mathbf y,\boldsymbol\epsilon} s_B(\mathbf z)^p$ for $\bullet=\mathrm{sign}$.
		We next consider the case $\bullet=\mathrm{abs}$ and, to this end, introduce the set $T\coloneqq\{(\tilde f_{\mathbf a}(y_i))_{i\le m}:\mathbf a\in B\}\subset\R^m$ for fixed $\mathbf y$. It is a bounded set, since~\eqref{eq:lewisprops} and~\eqref{eq:featbounds} imply
		\begin{align*}
			%\label{eq:Rad_bound}
			\sup_{y\in\Omega}\sup_{\mathbf{a}\in B}|\tilde{f}_{\mathbf{a}}(y)| =\sup_{y\in\Omega}\sup_{\mathbf{a}\in B}|\langle \mathbf{a},\tilde{\mathbf{h}}(y)\rangle|\le \sup_{y\in\Omega}\sup_{\mathbf{a}\in B}\|\mathbf{a}\|_2 \|\tilde{\mathbf{h}}(y)\|_2\le\gamma n .
		\end{align*}
		The function $\varphi\colon t\mapsto|t|$ is $1$-Lipschitz on $\R$ with $\varphi(0)=0$.
		Hence, applying the contraction Theorem~\ref{thmA:Contraction} with $F(s)\coloneqq(4s/m)^p$ and $\varphi_i\coloneqq\varphi$ gives for fixed $\mathbf y$ that
		\begin{align*}
			2^p \E_{\boldsymbol\epsilon}\sup_{\mathbf a\in B}\biggl|\frac1m\sum_{i=1}^m\epsilon_i\varphi\bigl(\tilde f_{\mathbf a}(y_i)\bigr)\biggr|^p
			&=\E_{\boldsymbol\epsilon}F\biggl(\frac12\sup_{t\in T}\biggl|\sum_{i=1}^m\epsilon_i\varphi(t_i)\biggr|\biggr)
			\le\E_{\boldsymbol\epsilon}F\biggl(\sup_{t\in T}\biggl|\sum_{i=1}^m\epsilon_it_i\biggr|\biggr)\notag\\
			&=4^p \E_{\boldsymbol\epsilon}\sup_{\mathbf a\in B}\biggl|\frac1m\sum_{i=1}^m\epsilon_i\tilde f_{\mathbf a}(y_i)\biggr|^p .
		\end{align*}
		Taking the expectation with respect to $\mathbf y$, the right-hand side of~\eqref{eq:symm} is therefore at most $4^p \E_{\mathbf y,\boldsymbol\epsilon} s_B(\mathbf z)^p$ for $\bullet=\mathrm{abs}$.
		We obtain in both cases $\bullet\in\{\mathrm{sign},\mathrm{abs}\}$
		\begin{equation}\label{eq:reduction}
			\E_{\mathbf y}Z_\bullet^p\le c_\bullet^p \,\E_{\mathbf y,\boldsymbol\epsilon} s_B(\mathbf z)^p .
		\end{equation}
		
		\noindent\emph{Step 3: We apply Lemma~\ref{lem:trunc} and Lemma~\ref{lem:bernmom} to obtain \eqref{eq:moments}.}
		Lemma~\ref{lem:trunc} and Tonelli's theorem, applicable since the integrand is non-negative and jointly measurable in $(x,\mathbf y,\boldsymbol\epsilon)$, give
		\begin{equation*}
			\E_{\mathbf y,\boldsymbol\epsilon} s_B(\mathbf z)^p\le n\int_\Omega\varrho_{\rm L}(x) \E_{\mathbf y,\boldsymbol\epsilon}|G_{\mathbf z}(x)|^p\dint\mu(x)
			\le n\sup_{x: \varrho_{\rm L}(x)>0}\E_{\mathbf y,\boldsymbol\epsilon}|G_{\mathbf z}(x)|^p ,
		\end{equation*}
		where the last step uses $\int_\Omega\varrho_{\rm L}\dint\mu=1$.
		For fixed $x$ with $\varrho_{\rm L}(x)>0$,  see~\eqref{eqdef:mathbf{u}(x)} and \eqref{eqdef:G_{mathbf{z}}}, we have
		\begin{equation*}
			G_{\mathbf z}(x)= \langle \mathbf{z},\mathbf{u}(x)\rangle = \sum_{i=1}^mX_i
		\end{equation*}
		with the i.i.d.\ variables $X_i\coloneqq\frac1m \epsilon_i\langle\tilde{\mathbf h}(y_i),\mathbf u(x)\rangle$, $i=1,\ldots,m$, which are centered since $\epsilon_i$ is independent of $y_i$.
		By $\|\mathbf u(x)\|_2=\sqrt{n}$, $|\epsilon_i|=1$, and $\|\tilde{\mathbf h}(y_i)\|_2\le\gamma\sqrt{n}$, see~\eqref{eq:featbounds}, these random variables satisfy $|X_i|\le\gamma n/m$.
		Furthermore, $\sum_{i=1}^m\E_{\mathbf y,\boldsymbol\epsilon} X_i^2\le \gamma n/m$ due to, see again~\eqref{eq:featbounds}, 
		\begin{equation*}
			\E_{\mathbf y,\boldsymbol\epsilon} X_i^2 =  \frac{1}{m^2} \E_{\mathbf{y}}\big[ \mathbf{u}(x)^\top \tilde{\mathbf{h}}(y)\tilde{\mathbf{h}}^{\top}(y)  \mathbf{u}(x)\big] =\frac{1}{m^2} \mathbf{u}(x)^\top \E_{\mathbf{y}}\big[\tilde{\mathbf{h}}(y)\tilde{\mathbf{h}}^{\top}(y)\big]\mathbf{u}(x)\le \frac{\gamma}{m^2}\|\mathbf u(x)\|_2^2 = \frac{\gamma n}{m^2}.
		\end{equation*}
		Lemma~\ref{lem:bernmom} with $v\coloneqq L\coloneqq\gamma n/m$, applicable since $L\sqrt{p/v}=\sqrt{p\gamma n/m}\le\frac18$, thus yields
		\begin{equation*}
			\E_{\mathbf y,\boldsymbol\epsilon}|G_{\mathbf z}(x)|^p\le2\Bigl({\ee}^{-11/23}\sqrt{\frac{p\gamma n}{m}}\Bigr)^p .
		\end{equation*}
		Inserting this into~\eqref{eq:reduction} gives $\E_{\mathbf y}Z_\bullet^p\le 2n\,c_\bullet^p ({\ee}^{-11/23}\sqrt{p\gamma n/m})^p$, which is~\eqref{eq:moments}.
	\end{proof}

    \noindent
	In combination with Proposition~\ref{propA:MomTail}, the previous lemma leads to the main result of this subsection, which is an essential ingredient in the proof of Theorem~\ref{thm:Prob_Tchak}.
	
	\begin{proposition}
		\label{prop:conc}
		If $m\ge 67\,\gamma n$, then, with probability at least $1-4n\exp(-m/(134\gamma n))$, both
		\begin{equation}\label{eq:conc}
			\sup_{f\in B_1(V)}\biggl|\sum_{i=1}^mw_i^0f(y_i)-\int_{\Omega} f\dint\mu\biggr|\le\frac14 \quad\text{and}\quad \sup_{f\in B_1(V)}\biggl|\sum_{i=1}^mw_i^0|f(y_i)|-\int_{\Omega} |f|\dint\mu\biggr|\le\frac12 
		\end{equation}
		hold true.
	\end{proposition}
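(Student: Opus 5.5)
The plan is to combine the moment bounds of Lemma~\ref{lem:moments} with Markov's inequality at a well-chosen moment order $p$, once for each of $Z_{\rm sign}$ and $Z_{\rm abs}$, and then take a union bound. Since Proposition~\ref{propA:MomTail} is only referenced and not stated here, I will argue directly via Markov: for $t>0$ and admissible $p$,
\begin{equation*}
\Prob(Z_\bullet>t)\le t^{-p}\E_{\mathbf y}Z_\bullet^p\le 2n\Bigl(\frac{c_\bullet\ee^{-11/23}}{t}\sqrt{\frac{p\gamma n}{m}}\Bigr)^p .
\end{equation*}
The two thresholds are $t_{\rm sign}=\frac14$ with $c_{\rm sign}=2$, and $t_{\rm abs}=\frac12$ with $c_{\rm abs}=4$. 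In both cases $c_\bullet/t_\bullet=8$, so a single choice of $p$ serves both processes, and each failure probability is at most $2n\bigl(8\ee^{-11/23}\sqrt{p\gamma n/m}\bigr)^p$.

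The key step is to choose $p$ so that the base equals $\ee^{-1/2}$ or smaller, which makes the bound $2n\ee^{-p/2}$. Setting $p\coloneqq m/(134\gamma n)\cdot 2$, that is $p=m/(67\gamma n)$, would give the right exponent, but it violates the constraint $p\le m/(64\gamma n)$ only in the wrong direction if it is too large. Here $m/(67\gamma n)\le m/(64\gamma n)$ holds, so the constraint is fine, and $p\ge1$ holds precisely because $m\ge 67\gamma n$.

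With this $p$, $\sqrt{p\gamma n/m}=1/\sqrt{67}$. The base is therefore $8\ee^{-11/23}/\sqrt{67}$, and I need to check that it is at most $\ee^{-1/2}$. This reduces to $64\le 67\,\ee^{22/23-1}=67\,\ee^{-1/23}$. Since $\ee^{-1/23}\approx 0.9575$, the right side is about $64.15\ge64$, which holds. This tight numerical check is exactly where the constants $67$ and $11/23$ were clearly tuned, and it is the only delicate point. The resulting failure probability is then at most $2n\,\ee^{-p/2}=2n\exp(-m/(134\gamma n))$ for each of the two suprema.

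A union bound over $\bullet\in\{\mathrm{sign},\mathrm{abs}\}$ then gives total failure probability at most $4n\exp(-m/(134\gamma n))$. On the complementary event, $Z_{\rm sign}\le\frac14$ and $Z_{\rm abs}\le\frac12$, which is exactly~\eqref{eq:conc} by the definitions of $Z_\bullet$. Measurability of $Z_\bullet$ was already settled above via the countable dense subset $B\cap\Q^n$. The reduction of Remark~\ref{rem:reduction} allows us to assume Proposition~\ref{prop:lewis} applies, and in the reduced case $n$ only becomes smaller, which only improves the bound.
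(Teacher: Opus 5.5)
Your proof is correct and is essentially the paper's argument. The direct Markov bound you write down is exactly the proof of Proposition~\ref{propA:MomTail}, which is in fact stated in the appendix, followed by the same union bound over $\bullet\in\{\mathrm{sign},\mathrm{abs}\}$.

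The only difference is the moment order. You take $p=m/(67\gamma n)$, which makes the exponent exactly $m/(134\gamma n)$ at the cost of the tight check $64\le 67\,\ee^{-1/23}$. The paper takes $p=u^2=m/(64\,\ee^{1/23}\gamma n)$, which makes the thresholds exactly $c_\bullet/8$, and then uses $128\,\ee^{1/23}\le 134$. Both choices lie in the admissible range $[1,m/(64\gamma n)]$ once $m\ge 67\gamma n$.
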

	\begin{proof}
		Let $\bullet\in\{\mathrm{sign},\mathrm{abs}\}$ and $\alpha_\bullet\coloneqq c_\bullet{\ee}^{-11/23}\sqrt{\gamma n/m}$.
		By Lemma~\ref{lem:moments}, $Z_\bullet$ satisfies the assumption of Proposition~\ref{propA:MomTail} with $\beta\coloneqq2n$, $\nu\coloneqq2$, $p_0\coloneqq1$, and $p_1\coloneqq m/(64\gamma n)>1$, which gives
		\begin{equation}
			\label{eq:fail_prob_2_40}
			\Prob\bigl(Z_\bullet\ge {\ee}^{1/2}\alpha_\bullet u\bigr)\le 2n\,{\ee}^{-u^2/2},\qquad u\in\bigl[1,\sqrt{m/(64\gamma n)}\bigr].
		\end{equation}
		We choose $u\coloneqq\sqrt{m/(64{\ee}^{1/23}\gamma n)}$, which lies in this range, since $m\ge67\gamma n\ge64{\ee}^{1/23}\gamma n$.
		Then ${\ee}^{1/2}\alpha_\bullet u=c_\bullet/8$, which is $\frac14$ for $\bullet=\mathrm{sign}$ and $\frac12$ for $\bullet=\mathrm{abs}$.
		By a union bound, \eqref{eq:conc} fails with probability at most $4n{\ee}^{-u^2/2}=4n\exp(-m/(128{\ee}^{1/23}\gamma n))\le 4n\exp(-m/(134\gamma n))$.
	\end{proof}

\begin{remark}($L_1$-norm discretization with $\log$-oversampling)\label{rem:discretization}
As a by-product, the bound on $Z_{\mathrm{abs}}$ in Proposition \ref{prop:conc} yields a weighted
$L_1$ sampling discretization (Marcinkiewicz-Zygmund) inequality for i.i.d.\ random
points.
For the event of Proposition~\ref{prop:conc}, it holds
\begin{equation*}
    \tfrac12 \|f\|_{L_1(\mu)} \le \sum_{i=1}^m w_i^0 |f(y_i)| \le \tfrac32 \|f\|_{L_1(\mu)}
  \qquad\text{for all } f\in V,
\end{equation*}
and the argument of Corollary~\ref{cor:near_equal} gives constants $1\pm\tau$ for
$m\ge 34\,\tau^{-2}\gamma n\log(4n/\delta)$. To the best of our knowledge, such a
statement, with i.i.d.\ points, $m\asymp n\log(n/\delta)$, explicit constants and
arbitrary failure probability, has not appeared in the literature before.

Talagrand \cite{Talagrand90} used the Lewis density for the embedding of $n$-dimensional subspaces $V$ of $L_1$ into $\ell_1^m$. A similar technique  underlies the existence result \cite[Cor.~1.1]{DaKoTe23}
in the sampling discretization literature. Sampling from the Lewis
density is treated in \cite[Cor.~4.14]{DaKoTe26} with a worse bound on the number of samples $m$, namely 
$m\gtrsim n\log^3 n$ points and failure probability $n^{-A}$. The same asymptotic bound was proved by Bourgain, Lindenstrauss, and Milman~\cite[Thm.~5.2]{BLM89} already in 1989. In the discrete setting, Cohen and Peng \cite{CohenPeng15} proved a corresponding
result for sampling by $\ell_1$-Lewis weights. Their bounds depend on the number of
rows in a matrix.

Our proof builds on the approach of \cite{Talagrand90},
combined with moment bounds via the Lewis basis. A particular feature is that it
controls the signed error $Z_{\mathrm{sign}}$ simultaneously due to the control of the failure probabilities. The latter does not
follow from the discretization inequality itself, but it is exactly what
Lemma~\ref{lem:box} requires for the weight adaptation in the proof of Theorem \ref{thm:Prob_Tchak}. 
\end{remark}

	\subsection{Richter-Tchakaloff quadrature with high probability}
	
	Recall the definition of the vectors $\mathbf{h}(y)$ in~\eqref{eq:DefH}, associated with the Lewis basis, and
	let us define $\boldsymbol{\varepsilon}\coloneqq \sum_{i=1}^m w_i^0\mathbf h(y_i)- \int_{\Omega} \mathbf h\dint\mu\in\R^n$, which represents the error functional of $Q^0(Y)$ on $V$.
	Indeed, it holds for all $\mathbf{a}\in\R^n$ that
	\begin{equation*}
		\langle \mathbf{a},\boldsymbol{\varepsilon}\rangle=\sum_{i=1}^m w_i^0f_{\mathbf a}(y_i)-\int_\Omega f_{\mathbf{a}}\dint\mu.
	\end{equation*}
	The next lemma provides a criterion for when the weights $w_i^0$ of the quadrature rule $Q^0(Y)$ can be adapted to compensate the quadrature error on $V$. The subsequent duality arguments for positive quadrature have a long history. Mhaskar, Narcowich, and Ward combined the Hahn-Banach and Krein-Rutman theorems to pass
from Marcinkiewicz-Zygmund inequalities to positive exact rules on the sphere, see \cite[Prop.\ 4.1]{MhNaWa01} and the corrigendum \cite{MNW02_corr}. Two-sided weight estimates were obtained by Narcowich, Petrushev, and Ward \cite{NPW06}, Petrushev and Xu \cite[Prop.\ 5.6]{PX08}, and Dai and Wang \cite[Lem.\ 3.4]{DaiWang10}. Lemma~\ref{lem:box} below is a quantitative refinement of this approach. It characterizes exactly when the reference weights $w^0_i$ can be corrected within the box $[(1-\tau)w^0_i, (1+\tau)w^0_i]$, so that exactness, positivity and two-sided weight bounds are obtained simultaneously from a single inequality.

\begin{lemma}\label{lem:box}
		Let $\tau\in[0,1]$.
		There is a weight vector $\mathbf{w}=(w_1,\ldots,w_m)\in\R^m$ with $(1-\tau)w_i^0\le w_i\le(1+\tau)w_i^0$ for all $i=1,\ldots,m$ and $\sum_{i=1}^m w_i\mathbf h(y_i) = \int_{\Omega} \mathbf h\dint\mu$ if and only if
		\begin{equation}\label{eq:boxcrit}
			|\langle \mathbf{a},\boldsymbol{\varepsilon}\rangle|\le\tau \sum_{i=1}^m w_i^0 |f_{\mathbf{a}}(y_i)|
			\qquad\text{for all }\mathbf{a}\in\R^n.
		\end{equation}
	\end{lemma}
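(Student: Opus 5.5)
We reduce the statement to a question about the image of a box under a linear map and then use finite-dimensional convex duality. Write $\mathbf w=\mathbf w^0+\mathbf t$ with $\mathbf w^0=(w_1^0,\ldots,w_m^0)$. The box condition becomes $|t_i|\le\tau w_i^0$. Let $\mathbf H\in\R^{n\times m}$ be the matrix with columns $\mathbf h(y_i)$. Exactness $\sum_i w_i\mathbf h(y_i)=\int_\Omega\mathbf h\dint\mu$ is then equivalent to $\mathbf H\mathbf t=-\boldsymbol\varepsilon$. The set $K\coloneqq\{\mathbf H\mathbf t: |t_i|\le\tau w_i^0\}$ is the linear image of a compact box, so it is a compact, convex and symmetric subset of $\R^n$. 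The claim is therefore equivalent to $-\boldsymbol\varepsilon\in K$, or by symmetry $\boldsymbol\varepsilon\in K$.

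\emph{Necessity.} Suppose such $\mathbf w$ exists. For any $\mathbf a\in\R^n$, exactness gives
\[
\langle\mathbf a,\boldsymbol\varepsilon\rangle=\sum_i (w_i^0-w_i)f_{\mathbf a}(y_i).
\]
Since $|w_i^0-w_i|\le\tau w_i^0$, we get $|\langle\mathbf a,\boldsymbol\varepsilon\rangle|\le\tau\sum_i w_i^0|f_{\mathbf a}(y_i)|$, which is \eqref{eq:boxcrit}.

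\emph{Sufficiency.} This direction carries the content, and we prove it by separation. Assume \eqref{eq:boxcrit} holds but $\boldsymbol\varepsilon\notin K$. Since $K$ is compact and convex, the strict separating hyperplane theorem yields $\mathbf a\in\R^n$ with
\[
\langle\mathbf a,\boldsymbol\varepsilon\rangle>\sup_{\mathbf k\in K}\langle\mathbf a,\mathbf k\rangle.
\]
We compute the support function of $K$ explicitly:
\[
\sup_{|t_i|\le\tau w_i^0}\langle\mathbf a,\mathbf H\mathbf t\rangle=\sup_{|t_i|\le\tau w_i^0}\sum_i t_i f_{\mathbf a}(y_i)=\tau\sum_i w_i^0|f_{\mathbf a}(y_i)|.
\]
The supremum is attained at $t_i=\tau w_i^0\operatorname{sign} f_{\mathbf a}(y_i)$, and it uses $w_i^0\ge0$. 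Combining the two displays gives $\langle\mathbf a,\boldsymbol\varepsilon\rangle>\tau\sum_i w_i^0|f_{\mathbf a}(y_i)|$, which contradicts \eqref{eq:boxcrit}. Hence $\boldsymbol\varepsilon\in K$, and $\mathbf w\coloneqq\mathbf w^0-\mathbf t$ for a preimage $\mathbf t$ of $\boldsymbol\varepsilon$ has all the required properties.

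\emph{Degenerate cases.} Everything happens in finite dimensions, so closedness of $K$ comes for free. The only delicate point is the degenerate case where some $\varrho(y_i)$ vanishes or the $\mathbf h(y_i)$ do not span $\R^n$. Neither affects the argument: separation in $\R^n$ does not require $K$ to have interior, and $w_i^0$ is finite almost surely.
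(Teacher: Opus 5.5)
Your proof is correct and is essentially the paper's argument. You reduce the claim to $\pm\boldsymbol\varepsilon$ lying in the linear image of the box $\prod_{i=1}^m[-\tau w_i^0,\tau w_i^0]$, and you compute the same support function $\tau\sum_{i=1}^m w_i^0|f_{\mathbf a}(y_i)|$. The only difference is presentation: the paper invokes the support-function characterization of compact convex sets, together with evenness, in one step, whereas you unpack it into a direct proof of necessity and a strict-separation argument for sufficiency.
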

	\begin{proof}
		Writing $\mathbf w=\mathbf w^0+\boldsymbol\delta$, the box constraints on $\mathbf w$ read
		$\boldsymbol\delta\in A_\tau\coloneqq \bigtimes_{i=1}^{m}[-\tau w_i^0,\tau w_i^0]$, and the error condition reads $\boldsymbol L\boldsymbol\delta=-\boldsymbol\varepsilon$ with $\boldsymbol{L}\coloneqq (\mathbf h(y_1),\ldots,\mathbf h(y_m))\in\R^{n\times m}$.
		Hence, the existence of $\mathbf w$ is equivalent to $-\boldsymbol\varepsilon\in B_\tau\coloneqq\boldsymbol L A_\tau$.
		The set $B_\tau\subset\R^n$ is convex and compact, so that $\boldsymbol v\in B_\tau$ if and only if
		$\langle\mathbf a,\boldsymbol v\rangle\le s_{B_\tau}(\mathbf a)\coloneqq\sup_{\boldsymbol v'\in B_\tau}\langle\mathbf a,\boldsymbol v'\rangle$ for all $\mathbf a\in\R^n$.
		The support function is given by
		\begin{align*}
			s_{B_\tau}(\mathbf{a})&= \max_{\boldsymbol{\delta}\in A_\tau} \langle \mathbf{a}, \boldsymbol{L}\boldsymbol{\delta} \rangle = \sum_{i=1}^m \max_{\delta_i\in [-\tau w_i^0,\tau w_i^0]} \delta_i\langle \mathbf{a},\mathbf{h}(y_i)\rangle
			=\tau \sum_{i=1}^m w_i^0 |f_{\mathbf{a}}(y_i)|.
		\end{align*}
		Since $s_{B_\tau}$ is even, $-\boldsymbol{\varepsilon}\in B_\tau$ is equivalent to~\eqref{eq:boxcrit}.
	\end{proof}
	
	\begin{proof}[Proof of Theorem~\ref{thm:Prob_Tchak}]
		Let $m\ge67\gamma n$, and assume that the event of Proposition~\ref{prop:conc} occurs, which happens with probability at least $1-4n\exp(-m/(134\gamma n))$.
		Then, we have $Z_{\rm sign}\le\tfrac14$ and $Z_{\rm abs}\le\tfrac12$, see~\eqref{eq:conc}.
		It follows for all $f\in V$ with $\|f\|_{L_1(\mu)}=1$ that
		\begin{equation}
			\label{eq:symcrit_0}
			\biggl|\sum_{i=1}^{m}w_i^0f(y_i)-\int_\Omega f\dint\mu\biggr| \le \frac14
			= \frac12\Big(\int_\Omega|f|\dint\mu-\frac12\Big)
			\le \frac12\sum_{i=1}^{m}w_i^0|f(y_i)| .
		\end{equation}
		By homogeneity, \eqref{eq:symcrit_0} extends to all $f\in V$, which is~\eqref{eq:boxcrit} with $\tau=\frac12$.
		Hence, Lemma~\ref{lem:box} guarantees the existence of weights  $\mathbf{w}\coloneqq (w_1,\ldots,w_m)$ with
		\begin{equation*}
			\frac{1}{2m\varrho(y_i)}\le w_i\le \frac{3}{2m\varrho(y_i)},\qquad i=1,\ldots,m,
		\end{equation*}
		such that the quadrature rule $Q_{\mathbf{w}}(Y)f = \sum_{i=1}^m w_if(y_i)$ is exact on $V$.
		By the reduction in Remark~\ref{rem:reduction}, $Q_{\mathbf w}(Y)$ is then also exact on the original space $V$, since $Y\cap\Omega_N=\emptyset$ holds almost surely and thus does not affect the probability bound.
		If $\mathbf{1}_{\Omega}\in V$ and $\mu(\Omega)=1$, exactness yields $\sum_{i=1}^m w_i=Q_{\mathbf{w}}(Y)\mathbf{1}_{\Omega}=\int_\Omega\mathbf{1}_{\Omega}\dint\mu=1$.
	\end{proof}
	
	\begin{corollary}[Almost equal weights]
		\label{cor:near_equal}
		In the setting of Theorem~\ref{thm:Prob_Tchak}, let $\tau\in(0,\frac12]$ and $\delta\in(0,1)$.
		If
		\begin{equation}
        \label{eq:cor_2.13}
			m\ge \frac{34}{\tau^2}\gamma n\log\Bigl(\frac{4n}{\delta}\Bigr),
		\end{equation}
		then, with probability at least $1-\delta$, there exist weights $\mathbf w=(w_1,\ldots,w_m)$ with
		\begin{equation*}
			\frac{1-\tau}{m\varrho(y_i)}\le w_i\le\frac{1+\tau}{m\varrho(y_i)},\qquad i=1,\ldots,m,
		\end{equation*}
		such that $Q_{\mathbf w}(Y)$ is a Richter-Tchakaloff rule for $V$.
	\end{corollary}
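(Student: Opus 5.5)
We follow the proof of Theorem~\ref{thm:Prob_Tchak}, but require $Z_{\rm sign}$ and $Z_{\rm abs}$ to be small at the scale $\tau$ rather than $\frac12$. By Lemma~\ref{lem:box}, it suffices to verify \eqref{eq:boxcrit} with the given $\tau$. Assume that for some thresholds $s,t$ the event $Z_{\rm sign}\le s$ and $Z_{\rm abs}\le t$ holds. Then for every $f\in V$ with $\|f\|_{L_1(\mu)}=1$,
\begin{equation*}
\biggl|\sum_{i=1}^m w_i^0 f(y_i)-\int_\Omega f\dint\mu\biggr|\le s
\qquad\text{and}\qquad
\sum_{i=1}^m w_i^0|f(y_i)|\ge 1-t .
\end{equation*}
Hence \eqref{eq:boxcrit} holds, by homogeneity for all $f\in V$, as soon as $s\le\tau(1-t)$. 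We take $s=\tau/3$ and $t=\tau$; then $\tau(1-t)\ge\tau/2\ge s$ because $\tau\le\frac12$.

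For the probability, we reuse Lemma~\ref{lem:moments} and the tail bound \eqref{eq:fail_prob_2_40}. There the event $Z_\bullet\ge \ee^{1/2}\alpha_\bullet u$ has probability at most $2n\ee^{-u^2/2}$ for $u\in[1,\sqrt{m/(64\gamma n)}]$, with $\ee^{1/2}\alpha_\bullet u=c_\bullet \ee^{1/2-11/23}u\sqrt{\gamma n/m}$. We pick $u_\bullet$ so that this threshold equals $s$ for $\bullet={\rm sign}$ (with $c_{\rm sign}=2$) and $t$ for $\bullet={\rm abs}$ (with $c_{\rm abs}=4$). Then $u_{\rm sign}^2=\frac{\tau^2 m}{36\,\ee^{1/23}\gamma n}$ and $u_{\rm abs}^2=\frac{\tau^2 m}{16\,\ee^{1/23}\gamma n}$. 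The binding constraint is the sign part, with $u_{\rm sign}^2/2\approx \tau^2 m/(75\gamma n)$.

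The main obstacle is matching the stated constant $34$. The most natural reading is that $34\approx 134/4$ arises from the $\tau=\frac12$ case rescaled by $\tau^{-2}$; this needs $u^2/2\ge \tau^2 m/(34\gamma n)$, i.e.\ thresholds of size about $\tau/2$ in both processes. I would therefore optimize the split more carefully. Instead of bounding $Z_{\rm abs}$ at level $\tau$, bound it only at a fixed level such as $\frac12$, which gives $\sum w_i^0|f(y_i)|\ge\frac12$ and hence only requires $Z_{\rm sign}\le \tau/2$. The abs event, independent of $\tau$, costs $2n\exp(-m/(134\gamma n))$, which is at most $\delta/2$ under \eqref{eq:cor_2.13}, since $34\tau^{-2}\ge136$. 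The sign event at threshold $\tau/2$ has $u^2=\tau^2 m/(16\,\ee^{1/23}\gamma n)$, so its failure probability is at most $2n\exp(-\tau^2 m/(32\,\ee^{1/23}\gamma n))\le 2n\exp(-\tau^2 m/(34\gamma n))$. This is at most $\delta/2$ once $m\ge 34\tau^{-2}\gamma n\log(4n/\delta)$, so the union bound gives total failure probability at most $\delta$.

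Finally, we check that $u$ stays in the admissible range. The bound $u\ge1$ follows from $m\tau^2\ge 34\gamma n\log 4$. The bound $u\le\sqrt{m/(64\gamma n)}$ holds because $\tau\le\frac12$ gives $\tau^2/(16\,\ee^{1/23})<1/64$. The requirement $m\ge 67\gamma n$ used for the abs part likewise follows from \eqref{eq:cor_2.13}. Lemma~\ref{lem:box} then yields the weights, and the reduction of Remark~\ref{rem:reduction} transfers exactness back to the original space $V$.
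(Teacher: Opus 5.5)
Your final argument is correct and is essentially the paper's: both verify \eqref{eq:boxcrit} using only $Z_{\rm abs}\le\frac12$ (so $\sum_i w_i^0|f(y_i)|\ge\frac12$) together with $Z_{\rm sign}\le\tau/2$, and then invoke Lemma~\ref{lem:box}. The difference is only bookkeeping. The paper uses one common $u=\sqrt{2\log(4n/\delta)}$ for both processes, so the abs threshold comes out as $\tau\le\frac12$ automatically and the total failure probability is $4n\ee^{-u^2/2}=\delta$. You instead reuse Proposition~\ref{prop:conc} for the abs part and split the failure budget as $\delta/2+\delta/2$.
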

	\begin{proof}
		As in the proof of Proposition~\ref{prop:conc}, Lemma~\ref{lem:moments} and Proposition~\ref{propA:MomTail} imply that $\Prob(Z_\bullet\ge {\ee}^{1/2}\alpha_\bullet u)\le 2n {\ee}^{-u^2/2}$ for $u\in[1,\sqrt{m/(64\gamma n)}]$, where $\alpha_\bullet\coloneqq c_\bullet{\ee}^{-11/23}\sqrt{\gamma n/m}$, see~\eqref{eq:fail_prob_2_40}.
		Now we choose $u\coloneqq\sqrt{2\log(4n/\delta)}$, which satisfies $u<\sqrt{m/(64\gamma n)}$ since $m\ge 136\,\gamma n\log(4n/\delta)$ due to~\eqref{eq:cor_2.13} and $\tau\le1/2$.
		Then both $Z_{\rm sign}<{\ee}^{1/2}\alpha_{\rm sign}u$ and $Z_{\rm abs}<{\ee}^{1/2}\alpha_{\rm abs}u$ hold with probability at least $1-4n{\ee}^{-u^2/2}=1-\delta$.
		By the assumption on $m$, we have ${\ee}^{1/2}\alpha_{\rm sign}u=(8\,{\ee}^{1/23}\gamma n\log(4n/\delta)/m)^{1/2}\le\tau/2$ and ${\ee}^{1/2}\alpha_{\rm abs}u=2\,{\ee}^{1/2}\alpha_{\rm sign}u\le\tau\le\frac12$, using $32\,{\ee}^{1/23}\le 34$.
		Hence, for all $f\in V$ with $\|f\|_{L_1(\mu)}=1$, as in~\eqref{eq:symcrit_0},
		\begin{equation*}
			\biggl|\sum_{i=1}^{m}w_i^0f(y_i)-\int_\Omega f\dint\mu\biggr|\le\frac{\tau}{2}=\tau\Bigl(\int_\Omega|f|\dint\mu-\frac12\Bigr)\le\tau\sum_{i=1}^{m}w_i^0|f(y_i)|,
		\end{equation*}
		which gives~\eqref{eq:boxcrit}.
		Lemma~\ref{lem:box} concludes the proof.
	\end{proof}

	\section{Positive quadrature in RKHS from random nodes}\label{sec:RKHS}
	
	Let us now turn to the RKHS setting specified in the introduction, whereby we assume $\mu(\Omega)>0$. First recall the crucial property of a reproducing kernel Hilbert space $\cH$ that all Dirac functionals $\delta_x$ with $f\mapsto f(x)$ on $\cH$ are continuous.
	Equivalently, the reproducing property $f(x) = \langle f, K(\cdot,x) \rangle_{\cH}$ holds for all $x \in \Omega$ and $f\in\cH$.
	For any orthonormal basis $\{e_j\}_{j\in J}$ of $\cH$, one further has the decomposition
	\begin{align}
		\label{Mercer}
		K(x,y)= \sum_{j\in J} e_j(x)e_j(y)
	\end{align}
	pointwise absolutely, as well as with unconditional convergence in $\cH$ for each function $K(x,\cdot)\in\cH$ or $K(\cdot,y)\in\cH$ on the left of equation~\eqref{Mercer}. 
	Since $\Id_{K,\mu}\colon  \cH \to L_2(\Omega,\mu)$ is Hilbert-Schmidt, it admits an at most countable singular system $\{(\eta_j,\sigma_j,e_j)\}_{j\in J}$, where $\{\eta_j\}_{j \in J}$ and $\{e_j\}_{j \in J}$ with $J \subset \N$ are orthonormal bases (ONB) of the closure of the range of $\Id_{K,\mu}$ and $\ker(\Id_{K,\mu})^\perp$, respectively.
	Further, let $\{\epsilon_k\}_{k \in M}$ be an ONB 
    of $\ker(\Id_{K,\mu})$, which is countable since $\cH$ is separable.
	In view of~\eqref{Mercer}, we thus have the expansion
	\begin{align*}
		K(x,y) = \sum_{j \in J} e_j(x)e_j(y) + \sum_{k \in M} \epsilon_k(x)\epsilon_k(y) .
	\end{align*}
	For each $n\in\N_0$, we define the so-called tail kernel
	\begin{equation*}
		K_n(x,y)
		\coloneqq   \sum_{j>n}
		e_j(x)e_j(y)
		=
		\sum_{j>n}
		\sigma_j^2
		\eta_j(x)\eta_j(y),\qquad x,y\in\Omega,
	\end{equation*}
	for which the associated tail diagonal and tail trace are given by
	\begin{align}
		\label{eqdef:En_and_Tn}
		E_n(x)
		\coloneqq  K_n(x,x)=
		\sum_{j>n}\sigma_j^2\eta_j(x)^2 \quad,
		\quad
		T_n
		\coloneqq 
		\|E_n\|_{L_1(\mu)}
		=
		\sum_{j>n}\sigma_j^2 .
	\end{align}
	Throughout, we fix the representatives $\eta_j \coloneqq \sigma_j^{-1}e_j$, so that these identities hold pointwise.
	We further introduce the subdomain
	\begin{align}
		\label{eqdef:restr_domain_D}
		D \coloneqq  \big\{ x\in\Omega :  K(x,x)= K_0(x,x) \big\} ,
	\end{align}
	whose points are precisely those where $\epsilon_k(x)$ vanishes for all $k\in M$.
	The subdomain $D$ covers almost all of $\Omega$.
	Indeed, since $M$ is countable and $\mu(\{\epsilon_k\neq 0\})=0$ for every $k\in M$ due to $\epsilon_k\in \ker(\Id_{K,\mu})$, the complement $\Omega \setminus D$ is a $\mu$-null set.
	Finally, we set $V_0\coloneqq \{0\}\subset\cH$, and for $n\in\N$ we define
	\begin{align}
		\label{eqdef:V_n}
		V_n \coloneqq  \spann_{\mathbb{R}}\{e_1,\ldots,e_n\} =\spann_{\mathbb{R}}\{\eta_1,\ldots,\eta_n\}.
	\end{align}
	If the singular system is finite with $r<\infty$ elements, we set $V_n=V_r$ for all $n\ge r$.

	Our goal, the proof of Theorem~\ref{thm:intro_RKHS}, will be accomplished in the next section. In this section, as an intermediate step, we first prove Theorem~\ref{thm:main2} below. It gives an estimate for the performance of certain positive quadrature rules obtained from randomly drawn nodes in $\Omega$.
	
	The sampling density $\varrho_n\colon \Omega\to\R_+$, $n\in\N$, used for the generation of the node sets $Y\coloneqq (y_1,\ldots,y_m)\in\Omega^m$ is given as a mixture of $3$ components, namely
	\begin{equation}
		\label{mix_density}
		\varrho_n\coloneqq \frac13\Bigl( \frac{1}{\mu(\Omega)}+\varrho_{{\rm L},n}+ \frac{E_n}{T_n} \Bigr) ,
	\end{equation}
	with $E_n$ and $T_n$ as defined in~\eqref{eqdef:En_and_Tn}.
	If $n\ge r\coloneqq\rank(\Id_{K,\mu})$,
	%, whence $E_n\equiv0$ and $T_n=0$, 
	we hereby replace $E_n/T_n$ by $1/\mu(\Omega)$. Note that, setting $\sigma_j\coloneqq0$ for $j>r$, we have $E_n\equiv0$ and $T_n=0$. The middle component $\varrho_{{\rm L},n}$ in~\eqref{mix_density} is the $L_1$-Lewis density in the sense of Section~\ref{sec:RandomRules} for the space $V_n^\prime\coloneqq V_n + \spann\{ \mathbf{1}_{\Omega} \}$, where $\mathbf{1}_{\Omega}$ denotes the $1$-function on $\Omega$. 
	
	Throughout the rest of this section, $\mu$ is assumed to be a probability measure. This is possible without loss of generality. Further, $B(\cH)$ denotes the unit ball in $\cH$.
	
	\begin{theorem}
		\label{thm:main2}
		Let $(\Omega,\Sigma,\mu)$ be a probability space, let $\cH$ be a separable RKHS of real-valued functions on $\Omega$ with finite-trace kernel $K$, and let $n\in\N$ and $s>1$.
		If $m\ge 402\,n$ nodes $Y\coloneqq (y_1,\ldots,y_m)$ are drawn independently according to $\varrho_n \mu$, then, with probability at least
		\begin{equation}
			\label{eq:main2_prob}
			1-8n\exp\Bigl(-\frac{m}{804\,n}\Bigr)-2^{3/4}m^{1-s},
		\end{equation}
		there exist weights $\mathbf{w}\coloneqq (w_1,\ldots,w_m)$ with
		\begin{equation}
			\label{eq:main2_weights}
			\frac{1}{2m\varrho_n(y_i)} \le w_i\le \frac{3}{2m\varrho_n(y_i)}\le\frac{9}{2m},\quad i=1,\ldots,m,\qquad \sum_{i=1}^{m} w_i=1,
		\end{equation}
		such that the quadrature rule $Q_{\mathbf{w}}(Y)$ is exact on $V_n^\prime \supset V_n$ and
		\begin{equation}
			\label{eq:main2_error}
			\sup_{f\in B(\cH)}
			\biggl|
			\int_{\Omega} f\dint\mu-Q_{\mathbf{w}}(Y)f
			\biggr|
			\le
			(1+\sqrt3)\sigma_{n+1}
			+
			3(1+\sqrt5)\sqrt{\frac{s\log m}{m}} \biggl(\sum_{j>n}\sigma_j^2\biggr)^{1/2} .
		\end{equation}
	\end{theorem}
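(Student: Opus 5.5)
We plan to prove Theorem~\ref{thm:main2} by combining Theorem~\ref{thm:Prob_Tchak}, applied to $V_n'$, with a separate probabilistic bound on the tail part of the error. First we check admissibility. The space $V_n'$ has dimension at most $n+1\le 2n$. The mixture~\eqref{mix_density} satisfies $\varrho_{{\rm L},n}\le 3\varrho_n$, so \eqref{eq:rho_admissible} holds with $\gamma=3$. Theorem~\ref{thm:Prob_Tchak} with $\dim V_n'\le 2n$ then needs $m\ge 67\cdot3\cdot2n=402n$. It gives failure probability at most $8n\exp(-m/(804n))$, with weights in $[\frac{1}{2m\varrho_n(y_i)},\frac{3}{2m\varrho_n(y_i)}]$ and exactness on $V_n'$. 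Since $\mathbf 1_\Omega\in V_n'$ and $\mu(\Omega)=1$, we get $\sum_i w_i=1$. Finally, $\varrho_n\ge\frac13$ yields $w_i\le 9/(2m)$, which settles \eqref{eq:main2_weights}.

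For the error, take $f\in B(\cH)$ and split $f=P_nf+(f-P_nf)$, where $P_n$ is the $\cH$-orthogonal projection onto $V_n$. Exactness on $V_n$ kills $P_nf$, so only $g:=f-P_nf$ remains. This function lies in the closed span of $\{e_j\}_{j>n}$ together with $\ker(\Id_{K,\mu})$. Off a null set we may restrict to $D$, where the kernel part vanishes pointwise; the nodes lie in $D$ almost surely. Write $\int g\,d\mu-Q_{\mathbf w}g=(\int g\,d\mu - Q^0 g)+(Q^0-Q_{\mathbf w})g$ with $Q^0$ the importance sampling rule. Exactness on constants lets us subtract $c=\int g\,d\mu$ in the second term, giving $|(Q^0-Q_{\mathbf w})(g-c)|\le\frac12\sum_i w_i^0|g(y_i)-c|$.

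Both pieces are governed by the operator $\frac1m\sum_i \frac{1}{\varrho_n(y_i)}K_n(\cdot,y_i)\otimes K_n(\cdot,y_i)$ acting on the tail. By Cauchy--Schwarz, $\sum_i w_i^0|g(y_i)|\le(\sum_i w_i^0 g(y_i)^2)^{1/2}$, using $\sum_i w_i^0\approx 1$ (from $Z_{\rm sign}$ applied to $\mathbf 1_\Omega$). Moreover $\sum_i w_i^0 g(y_i)^2=\langle g,\Phi g\rangle_\cH$ with $\Phi=\sum_i w_i^0 e_{>n}(y_i)e_{>n}(y_i)^\top$ in the tail coordinates. Its expectation is $\mathrm{diag}(\sigma_j^2)_{j>n}$, of norm $\sigma_{n+1}^2$.

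The key step, and the main obstacle, is a concentration bound for $\Phi$ in the spectral norm. The approach is an infinite-dimensional matrix Chernoff/Bernstein (intrinsic dimension) or the Moeller--Ullrich bound \cite{MoUl20}, producing
\[
\|\Phi\|\le 3\sigma_{n+1}^2+C\frac{s\log m}{m}T_n
\]
with failure probability of order $m^{1-s}$. This uses the summand bound $\|e_{>n}(y)\|^2/\varrho_n(y)=E_n(y)/\varrho_n(y)\le 3T_n$, supplied by the third mixture component. Taking square roots gives the $\sqrt3\sigma_{n+1}$ contribution and a $\sqrt{s\log m/m}\,T_n^{1/2}$ contribution. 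The centering $c$ and $|c|\le\|g\|_{L_2}\le\sigma_{n+1}$ supply the remaining $\sigma_{n+1}$. The Monte Carlo term $\int g-Q^0g$ is handled uniformly over the ball by its $\cH$-norm representer $\sum_i w_i^0E$-type vector, using the same $3T_n$ bound and a vector Bernstein/Markov estimate. Collecting constants, including the factor $3$ from $\varrho_n\ge E_n/(3T_n)$, is expected to produce $3(1+\sqrt5)$. A union bound with the Theorem~\ref{thm:Prob_Tchak} event then gives \eqref{eq:main2_prob}.
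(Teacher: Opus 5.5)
Your first paragraph matches the paper exactly. The error analysis, however, does not prove the statement as written, and the culprit is the detour through the importance sampling rule $Q^0$.

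\emph{The false step.} $Q^0$ is not exact on constants, so you cannot replace $g$ by $g-c$ in $(Q^0-Q_{\mathbf w})g$. Doing so drops the remainder $c\bigl(\sum_iw_i^0-1\bigr)$, which is nonzero in general.

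\emph{The extra event.} You treat $\int_\Omega g\dint\mu-Q^0g$ by a separate vector Bernstein estimate. This is an additional random event, so after the union bound your success probability is strictly smaller than \eqref{eq:main2_prob}. It also contributes a further multiple of $\sqrt{T_n s\log m/m}$ beyond what you already spent on $(1+\sqrt3)\sigma_{n+1}$ and the $\Phi$-term. Bounding it deterministically by $(\sum_iw_i^0)^{1/2}\lambda_{\max}(\Phi)^{1/2}$ on the Chernoff event would fix the probability but not the constants.

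\emph{The constants.} The factors you quote do not come out of your decomposition. Cauchy--Schwarz with only $\sum_iw_i^0\le\frac54$ and the box factor $\frac12$ never produces $\sqrt3$. Theorem~\ref{thm:MoUl20} gives
$\lambda_{\max}(\Phi)\le\|\boldsymbol{\Lambda}_n\|_{2\to2}+F\le2\sigma_{n+1}^2+12(3+\sqrt5)\frac{s\log m}{m}T_n$, not $3\sigma_{n+1}^2+\dots$. The factor $3$ only appears after comparing with the final weights, as follows.

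\emph{The missing idea.} Work with the final weights $\mathbf w$ directly and never compare with $Q^0$.
\begin{itemize}
\item By Proposition~\ref{prop:err_decomp}, for nodes in $D$ the worst-case error equals $\bigl\|\int_\Omega\boldsymbol{\psi}_n\dint\mu-\sum_iw_i\boldsymbol{\psi}_n(y_i)\bigr\|_2\le\sigma_{n+1}+\bigl\|\sum_iw_i\boldsymbol{\psi}_n(y_i)\bigr\|_2$. In your language, $\int_\Omega g\dint\mu-Q_{\mathbf w}g=c-Q_{\mathbf w}g$ with $|c|\le\sigma_{n+1}$, because $Q_{\mathbf w}$ itself is exact on constants.
\item Since $w_i\ge0$ and $\sum_iw_i=1$ exactly, Cauchy--Schwarz gives $\bigl\|\sum_iw_i\boldsymbol{\psi}_n(y_i)\bigr\|_2^2\le\lambda_{\max}\bigl(\sum_iw_i\boldsymbol{\psi}_n(y_i)\boldsymbol{\psi}_n(y_i)^\top\bigr)$.
\item The upper bound $w_i\le\frac{3}{2m\varrho_n(y_i)}$ gives $\sum_iw_i\boldsymbol{\psi}_n(y_i)\boldsymbol{\psi}_n(y_i)^\top\preceq\frac32\Phi$. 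This holds for every weight vector in the box, so the dependence of $\mathbf w$ on $Y$ is harmless.
\end{itemize}
The only event needed beyond that of Theorem~\ref{thm:Prob_Tchak} is then the single matrix Chernoff event, of probability at most $2^{3/4}m^{1-s}$. The resulting bound is $\sigma_{n+1}+\bigl(3\sigma_{n+1}^2+18(3+\sqrt5)\frac{s\log m}{m}T_n\bigr)^{1/2}$. Since $18(3+\sqrt5)=\bigl(3(1+\sqrt5)\bigr)^2$, this yields \eqref{eq:main2_error} with exactly the probability \eqref{eq:main2_prob}.

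Finally, the degenerate case $n\ge r$ (empty tail, $T_n=0$) is not covered by the Chernoff step and needs a separate line. There, exactness on $V_n$ with nodes in $D$ already gives zero error by Proposition~\ref{prop:err_decomp}.
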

	
	\begin{corollary}
		\label{cor:main2}
		Let $C\ge1$, $\delta\in(0,1)$, and draw a node set $Y\coloneqq (y_1,\ldots,y_m)$ as in Theorem~\ref{thm:main2} of size
		\begin{equation*}
			m\geq
			\max\left\{
			C\,804\,n\log\!\left(\frac{12n}{\delta}\right),
			\frac{6}{\delta}
			\right\} .
		\end{equation*}
		With probability at least $1-\delta$, there then exist weights $\mathbf{w}\coloneqq (w_1,\ldots,w_m)>0$
		with property~\eqref{eq:main2_weights} such that $Q_{\mathbf{w}}(Y)$ is exact on $V_n^\prime$ and
		\begin{equation*}
			\sup_{f\in B(\cH)}
			\biggl|
			\int_{\Omega} f\dint\mu-Q_{\mathbf{w}}(Y)f
			\biggr|
			\le
			(1+\sqrt3) \sigma_{n+1}
			+
			\sqrt{\frac{\log(\ee C)}{C}} \biggl(\frac{1}{ n}\sum_{j>n}\sigma_j^2\biggr)^{1/2} .
		\end{equation*}
	\end{corollary}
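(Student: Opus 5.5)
This follows directly from Theorem~\ref{thm:main2} with a choice of $s$ that absorbs the failure probability, followed by a monotonicity argument for the function $t\mapsto \log t/t$. We take $s\coloneqq 2$, so that the extra failure term is $2^{3/4}m^{-1}\le 2^{3/4}\delta/6<\delta/3$, using $m\ge 6/\delta$.

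For the main term we need $8n\exp(-m/(804n))\le 2\delta/3$. This is equivalent to $m\ge 804\,n\log(12n/\delta)$, which holds since $C\ge1$. The condition $m\ge 402n$ is also satisfied, since $\log(12n/\delta)\ge\log 12>1$. A union bound then gives total failure probability at most $\delta$. On the good event, Theorem~\ref{thm:main2} provides the weights with property~\eqref{eq:main2_weights}, exactness on $V_n^\prime$, and the error bound
\begin{equation*}
(1+\sqrt3)\sigma_{n+1}+3(1+\sqrt5)\sqrt{\frac{2\log m}{m}}\,\biggl(\sum_{j>n}\sigma_j^2\biggr)^{1/2}.
\end{equation*}

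It remains to show
\begin{equation*}
3(1+\sqrt5)\sqrt{\frac{2\log m}{m}}\le\sqrt{\frac{\log(\ee C)}{Cn}}.
\end{equation*}
Since $t\mapsto\log t/t$ is decreasing for $t\ge \ee$, we may replace $m$ by its lower bound $m_0\coloneqq 804\,C n L$ with $L\coloneqq\log(12n/\delta)\ge\log 12$. This gives
\begin{equation*}
\frac{2\log m}{m}\le\frac{2\bigl(\log C+\log(804\,nL)\bigr)}{804\,CnL}.
\end{equation*}
The key estimate is $\log(804\,nL)\lesssim L$. Indeed, $\log n\le L$, $\log L\le L$, and $\log 804\le 2.7\log 12\le 2.7L$, so $\log(804\,nL)\le 4.7L$.

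Hence
\begin{equation*}
\frac{2\log m}{m}\le \frac{2\log C}{804\,CnL}+\frac{9.4}{804\,Cn}\le\frac{2(\log C+4.7)}{804\,Cn}.
\end{equation*}
Multiplying by $9(1+\sqrt5)^2\approx 94.2$ gives at most
\begin{equation*}
\frac{0.235\,(\log C+4.7)}{Cn}\le\frac{\log C+1.1}{Cn}.
\end{equation*}
This is of the same form as the target $\log(\ee C)/(Cn)=(\log C+1)/(Cn)$.

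This constant bookkeeping is the only real obstacle, and the crude bound above is slightly too loose. The fix is to use the sharper estimate $\log(804\,nL)\le L+\log 804+\log L$ together with $L\ge\log 12\approx 2.48$. These give $(\log 804+\log L)/L\le 3.1$, hence $\log(804\,nL)\le 4.1L$ and the final numerical factor becomes $0.235\,(\log C+4.1)\le\log C+1$. Should any constant still be marginal, one could instead choose $s$ slightly above $1$, since only $s>1$ and $2^{3/4}m^{1-s}\le\delta/3$ are needed.

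One also needs $m\ge\ee$ for the monotonicity step, which is immediate. Finally, the factor $\sum_{j>n}\sigma_j^2$ combines with the $1/n$ to give exactly the claimed bound.
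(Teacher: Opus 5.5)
Your proposal is correct and follows the paper's proof: $s=2$, the union bound, replacing $m$ by its lower bound via monotonicity of $\log t/t$ on $[\ee,\infty)$, and then constant bookkeeping. The paper handles $\log C$ multiplicatively via $\log(Cx)\le\log(\ee C)\log x$ for $x\ge\ee$, while you handle it additively; your refined estimate $\log(804\,nL)\le 4.1L$ does close the argument. Drop the fallback remark about taking $s$ closer to $1$: with only $m\ge 6/\delta$ this would break $2^{3/4}m^{1-s}\le\delta/3$ for small $\delta$, and it is not needed anyway.
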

	\begin{proof}
		The statement follows from Theorem~\ref{thm:main2}.
		With $s\coloneqq2$, we have
		\begin{align*}
			8n\exp\Bigl(-\frac{m}{804\,n}\Bigr)
			\leq
			\frac{2\delta}{3} 
			\qquad\text{and}
			\qquad
			2^{3/4}m^{1-s}
			\leq
			\frac{2^{3/4}}{6} \delta
			<
			\frac{\delta}{3}. 
		\end{align*}
		Hence, with probability at least $1-\delta$, there indeed exist weights $\mathbf{w}\coloneqq (w_1,\ldots,w_m)$ such that~\eqref{eq:main2_weights} and~\eqref{eq:main2_error} hold true. Since the function $x\mapsto \log(x)/x$ is decreasing on
		$[\ee,\infty)$, we further have
		\begin{align*}
			\frac{s\log m}{m}
			=
			\frac{2\log m}{m}
			&\leq
			\frac{
				2\log\!\left(
				804\,C n\log\!\left(\frac{12n}{\delta}\right)
				\right)
			}{
				804\,C n\log\!\left(\frac{12n}{\delta}\right)
			}\notag
			\\
			&\leq
			\frac{\log(\ee C)}{C}\,
			\frac{
				2\log\!\left(
				804\,n\log\!\left(\frac{12n}{\delta}\right)
				\right)
			}{
				804\,n\log\!\left(\frac{12n}{\delta}\right)
			}
			\leq \frac{\log(\ee C)}{C}\,
			\frac{1}{131\,n} ,
		\end{align*}
		and consequently
		\begin{equation*}
			3(1+\sqrt5)
			\sqrt{\frac{s\log m}{m}}
			\leq
			\frac{3(1+\sqrt5)}{\sqrt{131\,n}}
			\sqrt{\frac{\log(\ee C)}{C}}
			\leq
			\frac{1}{\sqrt n}
			\sqrt{\frac{\log(\ee C)}{C}} .
			\qedhere
		\end{equation*}
	\end{proof}
	
	\noindent
	The proof of Theorem~\ref{thm:main2} proceeds via several steps.
	Since $\varrho_n\ge\frac13\varrho_{{\rm L},n}$, condition~\eqref{eq:rho_admissible} holds for $V\coloneqq V_n^\prime$ with $\gamma\coloneqq 3$.
	Hence, Theorem~\ref{thm:Prob_Tchak} applies with $\dim V_n^\prime\le n+1\le 2n$, i.e., under the condition $m\ge 67\cdot3\cdot2n=402\,n$ and with failure probability at most $4(n+1)\exp(-m/(402(n+1)))\le8n\exp(-m/(804\,n))$.
	It provides the exactness of $Q_{\mathbf{w}}(Y)$ on $V_n^\prime$ and the weight bounds in~\eqref{eq:main2_weights}. The uniform estimate $w_i\le 9/(2m)$ is hereby due to $\varrho_n\ge\frac{1}{3\mu(\Omega)}=\frac13$.
	For the error bound in~\eqref{eq:main2_error}, we express the worst-case error of rules that are exact on $V_n$ in terms of the tail of the singular system, and then estimate the associated tail Gram matrix.
	The latter relies on the component $E_n/T_n$ of $\varrho_n$.
	
	\subsection{An error identity for exact rules on \texorpdfstring{$V_n$}{Vn}}
	
	For $x\in\Omega$ and $n\in\N_0$, let us define the vectors
	\begin{equation}
		\label{eqdef:psi_n}
		\boldsymbol{\psi}_n(x)\coloneqq (\sigma_j\eta_j(x))_{j>n}=(e_j(x))_{j>n}
		\quad\text{and}\quad
		\int_\Omega\boldsymbol{\psi}_n\dint\mu\coloneqq\Bigl(\int_\Omega e_j\dint\mu\Bigr)_{j>n} ,
	\end{equation}
	which are elements of $\ell_2(\N)$.
	Indeed, by~\eqref{eqdef:En_and_Tn} and~\eqref{Mercer}, it holds $\|\boldsymbol{\psi}_n(x)\|_2^2=\sum_{j>n}e_j(x)^2=E_n(x)\le K(x,x)<\infty$.
	Moreover, since $e_j=\sigma_j\eta_j$ in $L_2(\Omega,\mu)$, we have $\int_\Omega e_j\dint\mu=\sigma_j\langle\eta_j,\mathbf{1}_\Omega\rangle_{L_2(\mu)}$, so that Bessel's inequality yields
	\begin{equation}
		\label{eq:mean_tail}
		\biggl\|\int_\Omega\boldsymbol{\psi}_n\dint\mu\biggr\|_2^2
		=\sum_{j>n}\sigma_j^2\langle\eta_j,\mathbf{1}_\Omega\rangle_{L_2(\mu)}^2
		\le\sigma_{n+1}^2\|\mathbf{1}_\Omega\|_{L_2(\mu)}^2=\sigma_{n+1}^2.
	\end{equation}
	As in Section~\ref{sec:RandomRules}, we treat elements of $\ell_2(\N)$ as column vectors.
	
	\begin{proposition}
		\label{prop:err_decomp}
		Let $\mu$ be a probability measure.
		For any quadrature rule $Q_{\mathbf{w}}(X)$ on $\cH$ with nodes $X=(x_1,\ldots,x_m)\in D^m$, see~\eqref{eqdef:restr_domain_D}, and weights $\mathbf{w}=(w_1,\ldots,w_m)\in\R^m$, which is furthermore exact on $V_n$, see~\eqref{eqdef:V_n}, it holds
		\begin{equation}
			\label{eq:err_identity}
			\sup_{f\in B(\cH)}
			\biggl|
			\int_{\Omega} f\dint\mu - Q_{\mathbf{w}}(X)f
			\biggr|
			=
			\biggl\|\int_\Omega\boldsymbol{\psi}_n\dint\mu-\sum_{i=1}^{m} w_i\boldsymbol{\psi}_n(x_i)\biggr\|_2
			\le
			\sigma_{n+1}
			+
			\biggl\|\sum_{i=1}^{m} w_i\boldsymbol{\psi}_n(x_i)\biggr\|_2 .
		\end{equation}
	\end{proposition}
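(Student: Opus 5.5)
The plan is to represent the error functional of $Q_{\mathbf w}(X)$ as an element of $\cH$ and compute its norm in the orthonormal basis $\{e_j\}_{j\in J}\cup\{\epsilon_k\}_{k\in M}$. For $f\in\cH$, write
\begin{equation*}
f=\sum_{j\in J}\langle f,e_j\rangle_{\cH}e_j+\sum_{k\in M}\langle f,\epsilon_k\rangle_{\cH}\epsilon_k ,
\end{equation*}
with convergence in $\cH$, hence pointwise (point evaluations are continuous). Both $f\mapsto\int_\Omega f\dint\mu$ and $f\mapsto Q_{\mathbf w}(X)f$ are continuous linear functionals on $\cH$. The first is continuous because $\cH\hookrightarrow L_2(\mu)\subset L_1(\mu)$ for a probability measure; the second is a finite combination of point evaluations. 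So the error functional $R(f)\coloneqq\int_\Omega f\dint\mu-Q_{\mathbf w}(X)f$ has Riesz representer $g$ with $\langle g,e\rangle_{\cH}=R(e)$ for each basis element $e$, and $\sup_{f\in B(\cH)}|R(f)|=\|g\|_{\cH}=(\sum_e R(e)^2)^{1/2}$ by Parseval.

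I would then evaluate $R$ on each basis element.
\begin{itemize}
\item For $k\in M$: $\epsilon_k=0$ $\mu$-a.e., so $\int\epsilon_k\dint\mu=0$. Since $X\in D^m$ and $\epsilon_k$ vanishes on $D$, also $Q_{\mathbf w}(X)\epsilon_k=0$. Hence $R(\epsilon_k)=0$.
\item For $j\le n$: $e_j\in V_n$ and the rule is exact on $V_n$, so $R(e_j)=0$.
\item For $j>n$: $R(e_j)=\int_\Omega e_j\dint\mu-\sum_i w_ie_j(x_i)$, which is exactly the $j$-th coordinate of $\int_\Omega\boldsymbol\psi_n\dint\mu-\sum_i w_i\boldsymbol\psi_n(x_i)$.
\end{itemize}
Summing the squares gives the claimed identity. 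This vector lies in $\ell_2$, because each $\boldsymbol\psi_n(x_i)\in\ell_2$ and the mean vector has norm at most $\sigma_{n+1}$ by \eqref{eq:mean_tail}. The inequality then follows from the triangle inequality in $\ell_2$ together with \eqref{eq:mean_tail}.

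The main point needing care is the Riesz/Parseval step: identifying the worst-case error with the $\cH$-norm of the representer and expanding in the full basis including the kernel part $\{\epsilon_k\}$. This is exactly where the restriction $X\in D^m$ is used, so that the kernel directions do not contribute. Alternatively, one can bypass the Riesz argument: write $R(f)=\sum_e\langle f,e\rangle_{\cH}R(e)$, justified by continuity of $R$ and convergence of the expansion in $\cH$, then apply Cauchy–Schwarz and choose $f$ proportional to $\sum_e R(e)e$ to get equality.
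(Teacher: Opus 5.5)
Your proposal is correct and follows essentially the same route as the paper. The paper also expands $f$ in the orthonormal basis $\{e_j\}_{j\in J}\cup\{\epsilon_k\}_{k\in M}$, kills the first $n$ coefficients by exactness on $V_n$, and kills the kernel directions using $X\in D^m$ and $\epsilon_k=0$ $\mu$-a.e. It then obtains the identity by Cauchy--Schwarz together with the extremal choice $f=\|\mathbf b\|_2^{-1}\sum_{j>n}b_je_j$, which is your ``alternative'' and the explicit form of your Riesz/Parseval computation, and concludes with the triangle inequality and \eqref{eq:mean_tail}.
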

	\begin{proof}
		Let $(e_j)_{j}$ and $(\epsilon_k)_{k}$ be the right singular functions and the kernel basis, respectively, which constitute an orthonormal basis for $\cH$.
		With these functions, we can write $f\in B(\cH)$ as $f=h+t$ with
		\begin{equation*}
			h\coloneqq \sum_{j\le n}c_j e_j \quad\text{and}\quad t\coloneqq \sum_{j>n}c_j e_j + \sum_{k\in M}d_k \epsilon_k
		\end{equation*}
		and coefficients that satisfy $\sum_{j\ge 1} c_j^2 + \sum_{k\in M} d_k^2 \le 1$.
		Note that all series converge in $\cH$.
		Since $Q_{\mathbf{w}}(X)$ is exact on $V_n$ and $h\in V_n$, we deduce that
		\begin{equation*}
			\int_{\Omega} f\dint\mu - Q_{\mathbf{w}}(X)f = \int_{\Omega} t\dint\mu -  Q_{\mathbf{w}}(X)t .
		\end{equation*}
		Since point evaluations and integration with respect to $\mu$ are in $\cH^\prime$ (by the reproducing property and~\eqref{P2:integrab}), both functionals can be applied term by term to the series defining $t$.
		Since the nodes $X=(x_1,\ldots,x_m)$ lie in $D$, we have $\epsilon_k(x_i)=0$, and since $\epsilon_k\in\ker(\Id_{K,\mu})$ vanishes $\mu$-almost everywhere, we have $\int_\Omega\epsilon_k\dint\mu=0$.
		Therefore, taking into account the countability of $M$, it holds
		\begin{equation*}
			t(x_i)=\sum_{j>n}c_j e_j(x_i) = \sum_{j>n}c_j\sigma_j \eta_j(x_i)
			\quad\text{and}\quad
			\int_\Omega t\dint\mu=\sum_{j>n}c_j\int_\Omega e_j\dint\mu .
		\end{equation*}
		With $\mathbf{c} \coloneqq  (c_j)_{j>n}$, we now obtain
		\begin{equation*}
			\int_{\Omega} f\dint\mu - Q_{\mathbf{w}}(X)f
			=
			\biggl\langle\mathbf{c},\int_\Omega\boldsymbol{\psi}_n\dint\mu-\sum_{i=1}^{m} w_i\boldsymbol{\psi}_{n}(x_i)\biggr\rangle .
		\end{equation*}
		Since $\|\mathbf{c}\|_2\le 1$, the Cauchy-Schwarz inequality shows that the left-hand side of~\eqref{eq:err_identity} is at most the middle term.
		Conversely, if $\mathbf{b}\coloneqq\int_\Omega\boldsymbol{\psi}_n\dint\mu-\sum_{i=1}^m w_i\boldsymbol{\psi}_{n}(x_i)\neq\mathbf{0}$, the choice $f\coloneqq\|\mathbf{b}\|_2^{-1}\sum_{j>n}b_je_j\in B(\cH)$ yields equality.
		The estimate in~\eqref{eq:err_identity} follows from the triangle inequality and~\eqref{eq:mean_tail}.
	\end{proof}
	
	\noindent
	Note that Proposition~\ref{prop:err_decomp} holds for arbitrary real weights.
	The weight bounds~\eqref{eq:main2_weights}, which imply positivity, become essential only when bounding the remaining term $\|\sum_{i=1}^m w_i\boldsymbol{\psi}_n(x_i)\|_2$ in Section~\ref{Sec:ProofThmSampled}.
	
	\subsection{Tail Gram estimate}
    %\label{sec:TailBound}
	
	For $y\in\Omega$, let us define the infinite sequences
	\begin{equation}
		\label{tilde_psi(y)}
		\tilde{\boldsymbol{\psi}}_n(y)\coloneqq (\sigma_j\eta_j(y))_{j>n}/\sqrt{\varrho_n(y)}=\boldsymbol{\psi}_n(y)/\sqrt{\varrho_n(y)},
	\end{equation}
	see~\eqref{eqdef:psi_n}, which are elements of $\ell_2(\N)$ due to $\boldsymbol{\psi}_n(y)\in\ell_2(\N)$ and $\varrho_n(y)\ge\frac{1}{3\mu(\Omega)}=\frac{1}{3}$.
	For a node set $Y\coloneqq (y_1,\ldots,y_m)\in\Omega^m$, we
	further define
	\begin{equation*}
		%\label{eqdef:Phi_tilde}
		\tilde{\boldsymbol{\Phi}}_{m,n}\coloneqq  \frac{1}{\sqrt{m}}\left(\begin{array}{rcl}
			- & \tilde{\boldsymbol{\psi}}_{1,n}^{\top} & - \\
			& \vdots       &\\
			- & \tilde{\boldsymbol{\psi}}_{m,n}^{\top} & -
		\end{array}\right)
	\end{equation*}
	with the shorthands $\tilde{\boldsymbol{\psi}}_{i,n}\coloneqq  \tilde{\boldsymbol{\psi}}_n(y_i)$, $i=1,\ldots,m$.
	The tail Gram matrix then has the form
	\begin{equation*}
		\tilde{\boldsymbol{\Phi}}_{m,n}^{\top} \tilde{\boldsymbol{\Phi}}_{m,n} =
		\frac{1}{m} \sum_{i=1}^m\tilde{\boldsymbol{\psi}}_{i,n}\tilde{\boldsymbol{\psi}}_{i,n}^{\top} .
	\end{equation*}
	
	\begin{proposition}
		%[tail Gram under the mixture]
		\label{prop:gram}
		Let $n<r$, $s>1$, and $m\ge 3$.
		If the nodes $Y\coloneqq (y_1,\ldots,y_m)\in\Omega^m$ are drawn independently according to $\varrho_n \mu$, then
		\begin{equation}
			\label{eq:conc-est}
\Prob\biggl(\lambda_{\max}\biggl(\frac1m\sum_{i=1}^m\tilde{\boldsymbol{\psi}}_{i,n}\tilde{\boldsymbol{\psi}}_{i,n}^{\top}\biggr)
			\ge 2\sigma_{n+1}^2+12(3+\sqrt5)\frac{s\log m}{m}T_n\biggr)\le 2^{3/4}m^{1-s} ,
		\end{equation}
		where $\tilde{\boldsymbol{\psi}}_{i,n}\coloneqq \tilde{\boldsymbol{\psi}}_n(y_i)$, see~\eqref{tilde_psi(y)}.
	\end{proposition}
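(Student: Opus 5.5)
We use a matrix Chernoff/Bernstein inequality for sums of i.i.d.\ rank-one positive semidefinite operators on $\ell_2(\N)$. The mean is controlled by the mixture structure of $\varrho_n$, the uniform norm bound by the tail-diagonal component $E_n/T_n$, and the tail probability by the $m^{1-s}$ form. The steps are as follows.

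\emph{Step 1 (expectation).} Compute
\begin{equation*}
\E\bigl[\tilde{\boldsymbol\psi}_n(y)\tilde{\boldsymbol\psi}_n(y)^\top\bigr]=\int_\Omega\boldsymbol\psi_n\boldsymbol\psi_n^\top\dint\mu=\operatorname{diag}(\sigma_j^2)_{j>n}.
\end{equation*}
This uses $\eta_j$ orthonormal in $L_2(\mu)$, and it uses $\varrho_n>0$ everywhere, so the density cancels exactly. Hence $\lambda_{\max}$ of the mean equals $\sigma_{n+1}^2$.

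\emph{Step 2 (uniform bound).} Since $\varrho_n\ge E_n/(3T_n)$, we get
\begin{equation*}
\|\tilde{\boldsymbol\psi}_n(y)\|_2^2=E_n(y)/\varrho_n(y)\le 3T_n
\end{equation*}
whenever $E_n(y)>0$. The bound is trivial when $E_n(y)=0$. So each summand $\frac1m\tilde{\boldsymbol\psi}_{i,n}\tilde{\boldsymbol\psi}_{i,n}^\top$ is positive semidefinite with norm at most $L\coloneqq 3T_n/m$.

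\emph{Step 3 (concentration).} Apply a matrix Chernoff upper-tail bound for sums of independent positive semidefinite operators $A_i\preceq L\,\Id$ with $\lambda_{\max}(\sum\E A_i)\le\mu_{\max}=\sigma_{n+1}^2$:
\begin{equation*}
\Prob\Bigl(\lambda_{\max}\Bigl(\sum A_i\Bigr)\ge(1+\epsilon)\mu_{\max}\Bigr)\le d\Bigl(\frac{\ee^\epsilon}{(1+\epsilon)^{1+\epsilon}}\Bigr)^{\mu_{\max}/L}.
\end{equation*}
The dimension is infinite here, so an intrinsic-dimension version is needed (Tropp/Minsker). In that version $d$ is replaced by a constant times the effective rank $\tr(\E)/\|\E\|$. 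That quantity can be large, though, so the better route is to avoid it. Use the version valid for $t\ge\mu_{\max}+L$-type thresholds with factor $\operatorname{intdim}$. Alternatively, and preferably, reduce to a finite-dimensional problem by projecting onto the span of $\tilde{\boldsymbol\psi}_{1,n},\dots,\tilde{\boldsymbol\psi}_{m,n}$, which has dimension at most $m$.

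Concretely, I would use the Rudelson/Oliveira-type bound for rank-one sums, which depends on $m$ rather than on the ambient dimension: $\lambda_{\max}(\frac1m\sum\tilde\psi_i\tilde\psi_i^\top)\le 2\|\E\|+C\,Lm\cdot\frac{\log(\text{stuff})+t}{m}$ with failure probability of order $m\,\ee^{-t}$. Taking $t=s\log m$ then yields the threshold $2\sigma_{n+1}^2+c\,\frac{s\log m}{m}T_n$ with failure probability $\lesssim m^{1-s}$. The explicit constants $12(3+\sqrt5)$ and $2^{3/4}$ suggest the following route. Use the inequality $(a+b)\le 2a+\dots$ obtained by solving the quadratic $x\le\sigma^2+\sqrt{xLt}+Lt$ (its golden-ratio-type root produces the factor $3+\sqrt5$). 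Combine this with Oliveira's bound $\Prob(\|\frac1m\sum A_i-\E\|\ge\ldots)\le (2m)^{\cdots}\ee^{-\cdots}$, whose prefactor involves $\min(d,2m^2)$, giving the $m$-power.

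\emph{Main obstacle.} The main difficulty is the infinite dimensionality: the dimension factor must be replaced by a polynomial in $m$ without losing the $\sigma_{n+1}^2$ leading term. I would first try Oliveira's theorem for sums of i.i.d.\ rank-one operators, which gives a dimension-free result with factor $\min(m^2,\dots)$. From there, the remaining steps are to pick $t$ so that the tail is $2^{3/4}m^{1-s}$, and then to absorb the cross term $\sqrt{\sigma_{n+1}^2\cdot L t}\le \sigma_{n+1}^2/2+\dots$ via AM–GM. This absorption produces the factor $2$ in front of $\sigma_{n+1}^2$.
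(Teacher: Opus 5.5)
Your Steps~1 and~2 match the paper. The mean is $\boldsymbol{\Lambda}_n=\mathrm{diag}(\sigma_j^2)_{j>n}$ with $\|\boldsymbol{\Lambda}_n\|_{2\to2}=\sigma_{n+1}^2$, and $\|\tilde{\boldsymbol{\psi}}_n(y)\|_2^2\le 3T_n$ follows from the component $E_n/T_n$ of $\varrho_n$.

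The gap is Step~3. It carries all the quantitative content of the statement: the constant $12(3+\sqrt5)$, the factor $2$, and the failure probability $2^{3/4}m^{1-s}$. You list several tools but never commit to one that delivers these bounds, and the ones you name do not.
\begin{itemize}
\item The intrinsic-dimension Chernoff/Bernstein bound carries the prefactor $\tr(\boldsymbol{\Lambda}_n)/\|\boldsymbol{\Lambda}_n\|_{2\to2}=T_n/\sigma_{n+1}^2$. This can be arbitrarily large independently of $m$.
\item Projecting onto $\spann\{\tilde{\boldsymbol{\psi}}_{1,n},\ldots,\tilde{\boldsymbol{\psi}}_{m,n}\}$ does not give a fixed finite-dimensional problem, because this subspace depends on the sample. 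A $d$-dimensional matrix Chernoff bound therefore cannot simply be applied with $d=m$.
\item A Rudelson/Oliveira-type bound has, in infinite dimension, a prefactor of order $m^2$ (your own $\min(d,2m^2)$). Making the exponential term $m^{-s}$ then leaves a failure probability of order $m^{2-s}$, not $2^{3/4}m^{1-s}$.
\item The factor $2$ in front of $\sigma_{n+1}^2$ does not come from an AM--GM absorption of a cross term.
\end{itemize}

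What is missing is a dimension-free matrix Chernoff inequality for i.i.d.\ $\ell_2(\N)$-valued vectors with failure probability $2^{3/4}m^{1-s}$. This is the paper's Theorem~\ref{thm:MoUl20} (Moeller--Ullrich): if $\|\mathbf{u}_i\|_2\le L$ and $\E\,\mathbf{u}_i\mathbf{u}_i^\ast=\boldsymbol{\Lambda}$, then
\[
\Prob\biggl(\Bigl\|\frac1m\sum_{i=1}^m\mathbf{u}_i\mathbf{u}_i^\ast-\boldsymbol{\Lambda}\Bigr\|_{2\to2}\ge F\biggr)\le 2^{3/4}m^{1-s},
\qquad
F=\max\Bigl\{8\phi^2\,\frac{s\log m}{m}\,L^2,\ \|\boldsymbol{\Lambda}\|_{2\to2}\Bigr\},
\]
where $\phi=(1+\sqrt5)/2$. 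The golden ratio is built into this theorem; it does not come from a quadratic you solve yourself. With $L^2=3T_n$ one gets $8\cdot3\cdot\phi^2=12(3+\sqrt5)$. Outside the exceptional event, the triangle inequality gives
\[
\lambda_{\max}\Bigl(\frac1m\sum_{i=1}^m\tilde{\boldsymbol{\psi}}_{i,n}\tilde{\boldsymbol{\psi}}_{i,n}^\top\Bigr)
\le\|\boldsymbol{\Lambda}_n\|_{2\to2}+\Bigl\|\frac1m\sum_{i=1}^m\tilde{\boldsymbol{\psi}}_{i,n}\tilde{\boldsymbol{\psi}}_{i,n}^\top-\boldsymbol{\Lambda}_n\Bigr\|_{2\to2}
<\sigma_{n+1}^2+F
\le 2\sigma_{n+1}^2+12(3+\sqrt5)\frac{s\log m}{m}T_n .
\]
The last step uses $\max\{a,b\}\le a+b$, and this is where the factor $2$ comes from. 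Without this theorem, or a proof of an equivalent bound with the same constants, your Step~3 only gives an estimate of the right shape with unspecified constants. Via Oliveira, it also gives a weaker probability bound.
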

	\begin{proof}
		By the orthonormality of $(\eta_j)_{j\ge1}$ in $L_2(\Omega,\mu)$, the random sequences $\tilde{\boldsymbol{\psi}}_{i,n}$ are i.i.d.\ with
		\begin{equation*}
			\E_{y \sim \varrho_n\mu}\bigl[\tilde{\boldsymbol{\psi}}_{i,n}\tilde{\boldsymbol{\psi}}_{i,n}^{\top}\bigr]=\int_{\Omega} \boldsymbol{\psi}_{n}(y)\boldsymbol{\psi}_{n}(y)^{\top}\dint\mu(y)
			=\mathrm{diag}(\sigma_j^2)_{j>n} \eqqcolon\boldsymbol{\Lambda}_n.
		\end{equation*}
		Further, since $\varrho_n\ge E_n/(3T_n)$ according to \eqref{mix_density}, it holds for every $y\in\Omega$ that
		\begin{equation*}
			\|\tilde{\boldsymbol{\psi}}_n(y)\|_2^2=\frac{E_n(y)}{\varrho_n(y)}\le3T_n .
		\end{equation*}
		Hence, we can apply Theorem~\ref{thm:MoUl20} with
		$L^2\coloneqq 3T_n$, $\|\boldsymbol{\Lambda}_n\|_{2\to2}=\sigma_{n+1}^2$, and the parameter $s>1$, whose exceptional event has probability at most $2^{3/4}m^{1-s}$.
		Moreover, since $8\cdot3\cdot\bigl(\frac{1+\sqrt5}{2}\bigr)^2=12(3+\sqrt5)$, the threshold in Theorem~\ref{thm:MoUl20} is
		\begin{equation*}
			F=\max\Bigl\{
			12(3+\sqrt5)\frac{s\log m}{m}T_n,
			\sigma_{n+1}^2
			\Bigr\}.
		\end{equation*}
		Outside the exceptional event, we thus have
		\begin{align*}
        \lambda_{\max}\biggl(\frac1m\sum_{i=1}^m\tilde{\boldsymbol{\psi}}_{i,n}\tilde{\boldsymbol{\psi}}_{i,n}^{\top}\biggr)
			&\le \|\boldsymbol{\Lambda}_n\|_{2\to2}
			+\biggl\|\frac1m\sum_{i=1}^m
			\tilde{\boldsymbol{\psi}}_{i,n}\tilde{\boldsymbol{\psi}}_{i,n}^{\top}-\boldsymbol{\Lambda}_n\biggr\|_{2\to2}\\
			&<\sigma_{n+1}^2+F
			\le2\sigma_{n+1}^2
			+12(3+\sqrt5)\frac{s\log m}{m}T_n,
		\end{align*}
		which is precisely~\eqref{eq:conc-est}.
	\end{proof}
	
	\subsection{Proof of Theorem~\ref{thm:main2}}
	\label{Sec:ProofThmSampled}
	
	%\begin{proof}[Proof of Theorem~\ref{thm:main2}]
	Assume first $n<r$.
	By Theorem~\ref{thm:Prob_Tchak}, applied with $V\coloneqq V_n^\prime$, $\varrho\coloneqq\varrho_n$, and $\gamma\coloneqq 3$, there exist, with probability at least $1-8n\exp(-m/(804\,n))$, see the discussion after Theorem~\ref{thm:main2}, weights $\mathbf{w}\coloneqq (w_1,\ldots,w_m)$ with $\frac12 w_i^0\le w_i\le\frac32 w_i^0$ such that $Q_{\mathbf{w}}(Y)$ is exact on $V_n^\prime$.
	As $\mathbf{1}_\Omega\in V_n^\prime$ and $\mu(\Omega)=1$, this gives $\sum_{i=1}^m w_i=1$. Since $\varrho_n\ge\frac{1}{3\mu(\Omega)}=\frac13$, we also get $0\le w_i\le 3/(2m\varrho_n(y_i)) \leq 9/(2m)$.
	By Proposition~\ref{prop:gram}, which applies since $m\ge402n\ge3$, the event in~\eqref{eq:conc-est} occurs with probability at most $2^{3/4}m^{1-s}$.
	Moreover, the randomly drawn nodes lie in $D$ almost surely, as $\mu(\Omega\setminus D)=0$.
	Hence, with probability at least~\eqref{eq:main2_prob}, all three properties hold simultaneously.
	Since $Q_{\mathbf{w}}(Y)$ is exact on $V_n\subset V_n^\prime$ and $Y\subset D$, Proposition~\ref{prop:err_decomp} yields, with $\boldsymbol{\psi}_{i,n}\coloneqq\boldsymbol{\psi}_n(y_i)$, that
	\begin{equation}
		\label{eq:proof_main2_1}
		\sup_{f\in B(\cH)} \biggl| \int_{\Omega} f\dint\mu-Q_{\mathbf{w}}(Y)f \biggr|
		\le \sigma_{n+1} + \biggl\|\sum_{i=1}^m w_i\boldsymbol{\psi}_{i,n}\biggr\|_2 .
	\end{equation}
	To estimate the last term, we set
	\begin{equation*}
		%\label{eqdef:Psi_{m,n}}
		\boldsymbol{\Psi}_{m,n} \coloneqq   \left(\begin{array}{rcl}
			- & \sqrt{w_1} \boldsymbol{\psi}_{1,n}^{\top} & - \\
			& \vdots       &\\
			- & \sqrt{w_m} \boldsymbol{\psi}_{m,n}^{\top} & -
		\end{array}\right) .
	\end{equation*}
	Then, it holds $\sum_{i=1}^m w_i\boldsymbol{\psi}_{i,n}=\boldsymbol{\Psi}_{m,n}^{\top}(\sqrt{w_1},\ldots,\sqrt{w_m})^{\top}$, where $w_i\ge 0$ is essential.
	From this, we obtain with $\sum_{i=1}^m w_i=1$ that
	\begin{equation}
		\label{eq:proof_main2_2}
		\biggl\|\sum_{i=1}^m w_i\boldsymbol{\psi}_{i,n}\biggr\|_2^2
		\le\biggl(\sum_{i=1}^m w_i\biggr)\|\boldsymbol{\Psi}_{m,n}\|_{2\to2}^2
		= \lambda_{\max}\bigl(\boldsymbol{\Psi}_{m,n}^{\top}\boldsymbol{\Psi}_{m,n}\bigr) \le \frac32 \lambda_{\max}\bigl(\tilde{\boldsymbol{\Phi}}_{m,n}^{\top}\tilde{\boldsymbol{\Phi}}_{m,n}\bigr) ,
	\end{equation}
	where the last step is based on the estimate
	\begin{equation*}
		\boldsymbol{\Psi}_{m,n}^{\top}\boldsymbol{\Psi}_{m,n} =\sum_{i=1}^m w_i \boldsymbol{\psi}_{i,n}\boldsymbol{\psi}_{i,n}^{\top} \preceq \frac{3}{2m} \sum_{i=1}^m  \tilde{\boldsymbol{\psi}}_{i,n}\tilde{\boldsymbol{\psi}}_{i,n}^{\top} = \frac32\tilde{\boldsymbol{\Phi}}_{m,n}^{\top}\tilde{\boldsymbol{\Phi}}_{m,n}.
	\end{equation*}
	Note that these estimates hold for every weight vector satisfying~\eqref{eq:main2_weights}, so it does not matter that $\mathbf{w}$ depends on $Y$.
	Invoking~\eqref{eq:proof_main2_1}, \eqref{eq:proof_main2_2}, and Proposition~\ref{prop:gram}, we finally arrive at
	\begin{align*}
		\sup_{f\in B(\cH)}
		\biggl|\int_{\Omega} f\dint\mu-Q_{\mathbf{w}}(Y)f\biggr|
		&\le \sigma_{n+1}
		+\Bigl(3\sigma_{n+1}^2
		+18(3+\sqrt5)\frac{s\log m}{m}T_n\Bigr)^{1/2}\notag\\
		&\le (1+\sqrt3)\sigma_{n+1}
		+3(1+\sqrt5)\sqrt{\frac{s\log m}{m}}\sqrt{T_n},
	\end{align*}
	where we used $18(3+\sqrt5)=(3(1+\sqrt5))^2$.
	If $n\ge r$, then $\boldsymbol{\psi}_n\equiv\mathbf{0}$, since there are no indices $j>n$ in~\eqref{eqdef:psi_n}.
	Hence, Proposition~\ref{prop:err_decomp} shows that every rule with nodes in $D$ that is exact on $V_n$ is exact on $\cH$.
	Besides $Y \subset D$, which holds almost surely, only the event of Theorem~\ref{thm:Prob_Tchak} is needed, which occurs with probability at least $1-8n\exp(-m/(804 n))$.
    The weight bounds follow as above. \hfill\qed
	%\end{proof}

	\section{Compression to constant oversampling}
	\label{sec:compression}
	
	In this section, we reduce the number of nodes for the rules from Theorem~\ref{thm:main2} from $m\asymp n\log(\ee n)$ to $O(n)$, which completes the proof of Theorem~\ref{thm:main1}.
	The argument is constructive.
	
	\subsection{A constructive compression lemma}
	The compression lemma below has a similar flavor as the subsampling procedures used by Nagel, Sch\"afer, T. Ullrich \cite{NaSchUl22} and Dolbeault, Krieg, and M.~Ullrich \cite[Prop.~17]{DoKrUl23}. For a constructive version, see Bartel, Sch\"afer, Ullrich \cite{BaSchUl23} for least squares recovery based on sparsification results by Batson, Spielman, Srivastava \cite{BSS12}. However, the compression step here is more elementary. 
	
	\begin{lemma}
		\label{lem:compress}
		Let $\mathbf{u}_1,\ldots,\mathbf{u}_m\in\ell_2(\N)$ with $\|\mathbf{u}_i\|_2\le R$ for some $R\ge0$, and let $\mathbf{p}=(p_1,\ldots,p_m)$ with $p_i\ge0$ and $\sum_{i=1}^m p_i=1$. Further, let $d\ge1$ and $\mathbf{L}\in\R^{d\times m}$ be a matrix whose row space contains the vector $(1,\ldots,1)\in\R^m$. Then, for every $k\in\N$, there is a vector $\mathbf{p}'=(p_1',\ldots,p_m')$ with $\max\{p_i,1/k \}\ge p_i'\ge 0$ such that
		\begin{equation*}
			%\label{eq:compress}
		\mathbf{L}\mathbf{p}'=\mathbf{L}\mathbf{p},\qquad
			\bigl|\{i: p_i'\neq0\}\bigr|\le k+d-1,\qquad
			\biggl\|\sum_{i=1}^m(p_i'-p_i)\mathbf{u}_i\biggr\|_2^2\le\frac{R^2}{k}\Bigl(1+\frac{d}{k}\Bigr).
		\end{equation*}
	\end{lemma}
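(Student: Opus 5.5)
The plan is an iterative rounding argument inside the polytope
\begin{equation*}
P\coloneqq\bigl\{\mathbf x\in\R^m:\ \mathbf L\mathbf x=\mathbf L\mathbf p,\ 0\le x_i\le c_i\bigr\},\qquad c_i\coloneqq\max\{p_i,1/k\},
\end{equation*}
which contains $\mathbf p$. Along the iteration a concave potential must not increase.

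Because the row space of $\mathbf L$ contains $(1,\ldots,1)$, every $\mathbf x\in P$ satisfies $\sum_i x_i=1$. Call a coordinate \emph{saturated} if $x_i=c_i\ge 1/k$. Since the saturated coordinates sum to at most $1$, there are at most $k$ of them. The support of the final vector $\mathbf p'$ is therefore at most $k$ plus the number of fractional coordinates $F\coloneqq\{i:0<x_i<c_i\}$. The aim is to push $|F|$ down to about $d-1$, with a little care in the boundary case where exactly $k$ coordinates are saturated and then $F=\emptyset$, so that the support is at most $k+d-1$. The bound $p_i'\le c_i$ holds by construction.

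For the error I would use the potential
\begin{equation*}
\Psi(\mathbf x)\coloneqq\Bigl\|\sum_{i}(x_i-p_i)\mathbf u_i\Bigr\|_2^2+\lambda\sum_{i}x_i(c_i-x_i),
\end{equation*}
with $\lambda>0$ to be fixed. The second term is non-negative on $P$, so $\Psi(\mathbf p')$ bounds the quantity we want. At the start,
\begin{equation*}
\Psi(\mathbf p)=\lambda\sum_i p_i(c_i-p_i)\le \lambda\sum_{i:\,p_i<1/k}p_i\cdot\tfrac1k\le\frac{\lambda}{k},
\end{equation*}
since $c_i-p_i$ vanishes unless $p_i<1/k$ and the $p_i$ sum to one. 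So it suffices to move only along directions $\mathbf v$ supported on $F$ on which $\Psi$ is concave. Moving in direction $\mathbf v$ changes $\Psi$ along $t\mapsto\Psi(\mathbf x+t\mathbf v)$, a quadratic with leading coefficient $\|\sum_{i\in F}v_i\mathbf u_i\|_2^2-\lambda\|\mathbf v\|_2^2$. If this coefficient is $\le0$, the quadratic is concave. Then one of the two directions $\pm\mathbf v$ does not increase $\Psi$ until we hit a face of $P$. That face makes a new coordinate $0$ or saturated, so $|F|$ strictly decreases. Iterating gives a finite procedure, which is constructive given an eigen-decomposition of a finite Gram matrix.

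The main obstacle is to find such a $\mathbf v$ whenever $|F|$ is still large. We need $\mathbf v\ne0$, supported on $F$, with $\mathbf L\mathbf v=0$ and with $\mathbf v^\top \mathbf G_F\mathbf v\le\lambda\|\mathbf v\|_2^2$, where $\mathbf G_F=(\langle\mathbf u_i,\mathbf u_j\rangle)_{i,j\in F}$. The trace of $\mathbf G_F$ is at most $R^2|F|$, so $\mathbf G_F$ has fewer than $R^2|F|/\lambda$ eigenvalues exceeding $\lambda$. The kernel of $\mathbf L$ restricted to $F$ has dimension at least $|F|-d$. The two subspaces therefore intersect nontrivially as soon as $|F|-d>R^2|F|/\lambda$. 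Choosing $\lambda\approx R^2(1+d/k)$ makes this possible for all $|F|\ge k+d$, roughly. This yields a final error of order $\lambda/k=\frac{R^2}{k}(1+\frac dk)$.

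The delicate part is that this eigenvalue count only guarantees progress while $|F|$ is of order $k+d$, not down to $d-1$. I would try to close this gap in one of two ways. First, I would let the threshold $\lambda$ depend on $|F|$ and use the trivial bound $\|\sum_{i\in F}v_i\mathbf u_i\|_2\le R\|\mathbf v\|_1\le R\sqrt{|F|}\|\mathbf v\|_2$ for the last stage. Second, I would count saturated and fractional coordinates jointly, using $\sum_i x_i=1$, so that $|F|+\#\{\text{saturated}\}\le k+d-1$ holds at termination. If neither works, I would fall back on a final Carath\'eodory step on the fractional part, and pay for it in the constant.
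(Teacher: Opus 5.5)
Your framework is the one the paper uses. You round inside the polytope, move along directions in $\ker\mathbf L$ supported on the fractional coordinates, and use a trace count to keep the quadratic coefficient $\|\mathbf U\mathbf v\|_2^2-\lambda\|\mathbf v\|_2^2$ nonpositive, where $\mathbf U\mathbf v=\sum_i v_i\mathbf u_i$. But the argument is not complete, and the gap is exactly the one you flag.

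With a fixed $\lambda=R^2(1+d/k)$, a concave direction is only guaranteed while $|F|\ge k+d$. The iteration, however, must run as long as the support is at least $k+d$. By your own joint count this only forces $|F|\ge d+kM$, where $M=\sum_{i\in F}x_i$. So when $|F|$ first drops below $k+d$, the fractional mass can still be as large as $(k-1)/k$. None of your fallbacks copes with that:
\begin{itemize}
\item The trivial bound needs $\lambda\approx R^2(k+d)$. Raising $\lambda$ to this value increases $\Psi$ by $(\lambda'-\lambda)\sum_{i\in F}x_i(1/k-x_i)$. Your accounting can only bound this by roughly $R^2(k+d)M/k$, i.e., order $R^2$.
\item A Carath\'eodory step on $F$ can only be bounded by $(2RM)^2$, again order $R^2$.
\item A fixed threshold that the trace count certifies all the way down to $|F|=d+1$ must be $R^2(d+1)$. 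That gives only $R^2(d+1)/k$, which is useless for $d\asymp k$, the regime used later.
\end{itemize}
These fallbacks lose the factor $1/k$ itself, not merely a constant. Even the natural repair still falls short. Take a stage-dependent threshold $\lambda(|F|)=R^2|F|/(|F|-d)$ and combine it with the budget bound $\sum_{i\in F}x_i(1/k-x_i)\le M/k\le(|F|-d)/k^2$. This leaves an extra harmonic sum of order $R^2d\log k/k^2$, so it does not give the stated bound $\frac{R^2}{k}(1+\frac dk)$.

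The missing idea is to use the actual minimal Rayleigh quotient as the threshold, floored at $R^2$. Set $\lambda_t=\max\{R^2,\min\{\|\mathbf U\mathbf d\|_2^2:\mathbf d\in\mathcal K_t,\ \|\mathbf d\|_2=1\}\}$, where $\mathcal K_t$ is the space of kernel directions supported on the current free set. These spaces are nested, so $\lambda_t$ is nondecreasing. Let $S_t=\|\mathbf p'\|_2^2-\|\mathbf v^{(t)}\|_2^2$ and $\nu_t=\dim\mathcal K_t$. Then the joint count couples the remaining budget to the kernel dimension: $S_t\le M_t/k\le\nu_t/k^2$. Partial summation of the one-step inequalities yields $\|\mathbf U(\mathbf p'-\mathbf p)\|_2^2\le R^2/k+k^{-2}\sum_t(\lambda_t-\lambda_{t-1})\nu_t$.

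This last sum is controlled globally rather than stage by stage. Let $t_0$ be the first stage with $\lambda_{t_0}>R^2$. Take an orthonormal basis $\mathbf b_l$ adapted to the chain $\mathcal K_{t_0}\supset\mathcal K_{t_0+1}\supset\cdots$, and let $r_l$ be the last stage whose space contains $\mathbf b_l$. The sum then becomes $\sum_l(\lambda_{r_l}-R^2)\le\sum_l\|\mathbf U\mathbf b_l\|_2^2-\nu_{t_0}R^2$. Bessel's inequality bounds $\sum_l\|\mathbf U\mathbf b_l\|_2^2$ by $\sum_{i\in J_{t_0}}\|\mathbf u_i\|_2^2\le(\nu_{t_0}+d)R^2$. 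So the sum is at most $dR^2$, with no logarithm. This interlacing-type estimate, which charges all later threshold increases to a single trace bound, is what your proposal lacks.
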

	
	\begin{proof}
		First note that one can just choose $\mathbf{p}'\coloneqq \mathbf{p}$ if not more than $k+d-1$ entries of $\mathbf{p}$ are nonzero.
		Otherwise, we deploy an iterative procedure that reduces $\mathbf{p}$ to a vector $\mathbf{p}'$ with the desired properties.
		To this end, set $I\coloneqq\{i:p_i<1/k\}$, and define the nonempty polytope
		\begin{equation*}
			\mathcal P\coloneqq \bigl\{\mathbf{v}\in\R^m:\mathbf{L}\mathbf{v}=\mathbf{L}\mathbf{p},\ 0\le v_i\le 1/k \text{ for } i\in I,\ v_i=p_i \text{ for } i\notin I\bigr\},
		\end{equation*}
		and the linear map $\mathbf{U}\colon\R^m\to\ell_2(\N)$ with $\mathbf{v}\mapsto\sum_{i=1}^m v_i\mathbf{u}_i$.
		
		The idea for the iteration is simple. Starting with $\mathbf{v}^{(0)}\coloneqq\mathbf{p}$, one constructs points $\mathbf{v}^{(t)}\in\mathcal P$, $t\in\{1,\ldots,T\}$, by moving in each step along a unit direction $\mathbf{d}^{(t)}\in\ker\mathbf L$ until at most $k+d-1$ coordinates are nonzero.
		Let $\mathbf{v}^{(t)}$ be the current point, \smash{$J_t\coloneqq \{i\in I:0<v_i^{(t)}<1/k\}$} the set of its free coordinates, and suppose that $\mathbf v^{(t)}$ has at least $k+d$ nonzero entries.
		The free coordinates carry the mass \smash{$M_t\coloneqq\sum_{i\in J_{t}}v_i^{(t)}$}, and all other nonzero coordinates are at least $1/k$.
		As \smash{$\sum_{i=1}^m v_i^{(t)}=1$} by the assumption on $\mathbf L$, there are at most $k(1-M_t)$ entries of at least $1/k$, so that $k+d\le k(1-M_t)+|J_t|$, i.e., $|J_t|-d\ge kM_t$.
		In particular, $J_t\neq\emptyset$ and hence $M_t>0$.
		Since $\rank(\mathbf{L})\le d$, the space $\mathcal{K}_t\coloneqq\{\mathbf c\in\ker\mathbf{L}: c_i=0 \text{ for } i\notin J_t\}$ of directions that only move free coordinates has dimension $\nu_t\ge|J_t|-d\ge kM_t>0$.
		
		Let $\mathbf{d}^{(t)}\in\mathcal{K}_t$ be a unit vector minimizing $\|\mathbf{U}\mathbf{d}\|_2^2$ among all unit vectors $\mathbf{d}\in\mathcal{K}_t$, and set $\lambda_t\coloneqq\max\{R^2,\|\mathbf{U}\mathbf{d}^{(t)}\|_2^2\}$.
		The set of all $\alpha\in\R$ with $\mathbf{v}^{(t)}+\alpha\mathbf{d}^{(t)}\in\mathcal P$ is a bounded closed interval with $0$ in its interior, since $\mathbf{d}^{(t)}\in\ker\mathbf{L}$ is nonzero only for coordinates in $J_t$.
		On this interval, 
		\begin{equation*}
			\alpha\mapsto \|\mathbf{U}(\mathbf{v}^{(t)}+\alpha\mathbf{d}^{(t)}-\mathbf{p})\|_2^2-\lambda_t\|\mathbf{v}^{(t)}+\alpha\mathbf{d}^{(t)}\|_2^2
		\end{equation*}
		is a concave function in $\alpha$ since its leading coefficient $\|\mathbf{U}\mathbf{d}^{(t)}\|_2^2-\lambda_t\|\mathbf{d}^{(t)}\|_2^2$ is nonpositive.
		Hence, at one of the two endpoints, its value does not exceed its value at $0$, and we let $\mathbf{v}^{(t+1)}$ be such an endpoint, where
		\begin{equation}
			\label{eq:compress_step}
			\|\mathbf{U}(\mathbf{v}^{(t+1)}-\mathbf{p})\|_2^2\le \|\mathbf{U}(\mathbf{v}^{(t)}-\mathbf{p})\|_2^2 + \lambda_t\bigl(\|\mathbf{v}^{(t+1)}\|_2^2-\|\mathbf{v}^{(t)}\|_2^2\bigr).
		\end{equation}
		There, at least one previously free coordinate has reached one of its bounds $0$ or $1/k$, while all coordinates outside of $J_t$ remain unchanged.
		We proceed with this iteration, where clearly $J_{t+1}\subsetneq J_t$ and $\mathcal K_{t+1}\subset\mathcal K_t$, until a vector $\mathbf{p}'\coloneqq \mathbf{v}^{(T)}$ is reached with less than $k+d$ nonzero entries. At the latest, this is the case when $J_t=\emptyset$.
		In particular, $(\lambda_t)$ is nondecreasing.
		
		Set $S_t\coloneqq\|\mathbf{p}'\|_2^2-\|\mathbf{v}^{(t)}\|_2^2$, so that $S_T=0$.
		Since $\mathbf{p}'$ and $\mathbf{v}^{(t)}$ agree outside of $J_t$, $\sum_{i\in J_t}p_i'=M_t$, and $0\le p_i'\le 1/k$ for $i\in J_t\subset I$, we have $S_t\le\sum_{i\in J_t}p_i'^2\le M_t/k\le\nu_t/k^2$, and likewise $S_0\le\sum_{i\in I}p_i'^2\le1/k$.
		Summing~\eqref{eq:compress_step} over $t<T$ and using partial summation with $\lambda_{-1}\coloneqq R^2$ as well as $\lambda_t\ge\lambda_{t-1}$, we obtain
		\begin{equation}
			\label{eq:compress_sum}
			\|\mathbf{U}(\mathbf{p}'-\mathbf{p})\|_2^2\le \sum_{t=0}^{T-1}\lambda_t\bigl(S_t-S_{t+1}\bigr)
			= R^2S_0+\sum_{t=0}^{T-1}(\lambda_t-\lambda_{t-1})S_t
			\le \frac{R^2}{k}+\frac{1}{k^2}\sum_{t=0}^{T-1}(\lambda_t-\lambda_{t-1})\nu_t .
		\end{equation}
		If $\lambda_t=R^2$ for all $t<T$, we are done.
		Otherwise, let $t_0$ be the first $t$ with $\lambda_t>R^2$, so that only the terms with $t\ge t_0$ remain. Noting $\lambda_{t_0-1}-R^2=0$, again with partial summation 
		\begin{equation}
			\label{eq:compress_tail_0}
			\sum_{t=t_0}^{T-1}(\lambda_t-\lambda_{t-1})\nu_t
			%=\sum_{t=t_0}^{T-1}\bigl((\lambda_t-R^2)-(\lambda_{t-1}-R^2)\bigr)\nu_t
			=(\lambda_{T-1}-R^2)\nu_{T-1} + \sum_{t=t_0}^{T-2}(\lambda_t-R^2)(\nu_t-\nu_{t+1}) .
		\end{equation}
		By nestedness, we can choose an orthonormal basis $\mathbf b_1,\ldots,\mathbf b_{\nu_{t_0}}$ of $\mathcal K_{t_0}$ such that $\mathbf b_1,\ldots,\mathbf b_{\nu_t}$ is a basis of $\mathcal K_t$ for $t_0\le t<T$.
		For $l\in\{1,\ldots,\nu_{t_0}\}$, let $r_l$ be the largest $t\in\{t_0,\ldots,T-1\}$ with $\mathbf b_l\in\mathcal K_t$. Then $\lambda_{r_l}=\|\mathbf U\mathbf{d}^{(r_l)}\|_2^2\le\|\mathbf U\mathbf b_l\|_2^2$ by the minimality of $\mathbf d^{(r_l)}$, and $\nu_t$ is the number of $l$ with $r_l\ge t$.
		Consequently, 
		\begin{equation}
			\label{eq:compress_tail}
			%\sum_{t=t_0}^{T-1}(\lambda_t-\lambda_{t-1})\nu_t=
			(\lambda_{T-1}-R^2)\nu_{T-1} + \sum_{t=t_0}^{T-2}(\lambda_t-R^2)(\nu_t-\nu_{t+1})=\sum_{l=1}^{\nu_{t_0}}\bigl(\lambda_{r_l}-R^2\bigr)
			\le\sum_{l=1}^{\nu_{t_0}}\|\mathbf U\mathbf b_l\|_2^2-\nu_{t_0}R^2 .
		\end{equation}
		To bound the right-hand side, we write $\mathbf u_i=(u_{i,s})_{s\in\N}$.
		Since $\mathbf b_l$ vanishes outside of $J_{t_0}$, we may regard $\mathbf b_1,\ldots,\mathbf b_{\nu_{t_0}}$ as orthonormal vectors in $\R^{J_{t_0}}$, and the $s$-th coordinate of $\mathbf U\mathbf b_l$ is $\langle\mathbf b_l,(u_{i,s})_{i\in J_{t_0}}\rangle$.
		Hence, Bessel's inequality, together with $\|\mathbf u_i\|_2\le R$ and $\nu_{t_0}\ge|J_{t_0}|-d$, yields
		\begin{equation*}
			\sum_{l=1}^{\nu_{t_0}}\|\mathbf U\mathbf b_l\|_2^2
			=\sum_{s\in\N}\sum_{l=1}^{\nu_{t_0}}\bigl\langle\mathbf b_l,(u_{i,s})_{i\in J_{t_0}}\bigr\rangle^2
			\le\sum_{s\in\N}\sum_{i\in J_{t_0}}u_{i,s}^2
			=\sum_{i\in J_{t_0}}\|\mathbf u_i\|_2^2
			\le(\nu_{t_0}+d)R^2 .
		\end{equation*}
		Thus, \eqref{eq:compress_tail} and~\eqref{eq:compress_tail_0} are bounded by $dR^2$, and inserting this into~\eqref{eq:compress_sum} proves the claim.
	\end{proof}

\noindent
In combination with Theorem~\ref{thm:Prob_Tchak}, we get a result on Richter-Tchakaloff rules with bounded weights.

\begin{theorem}
		\label{thm:RT_bounded}
		Let $V\subset L_1(\Omega,\mu)$ be a space of real-valued functions on $\Omega$ with $\dim V=n$, and assume that $\mu$ is a probability measure. Then there exist nodes $X=(x_1,\ldots,x_{2n})\in\Omega^{2n}$ and weights
        \begin{equation*}
			0\le w_i\le\frac{3}{n},\qquad i=1,\ldots,2n,
		\end{equation*}
        such that $Q_{\mathbf{w}}(X)$ is a Richter-Tchakaloff rule for $V$, i.e., $Q_{\mathbf{w}}(X)f=\int_\Omega f\dint\mu$ for all $f\in V$.
	\end{theorem}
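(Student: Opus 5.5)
Starting from Theorem~\ref{thm:Prob_Tchak}, we first produce a positive exact rule on many random nodes whose weights are uniformly of size $O(1/m)$. Lemma~\ref{lem:compress} will then reduce the number of nodes to $2n$ while keeping exactness and the upper weight bound $\max\{p_i,1/k\}$.

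\emph{Step 1 (a good initial rule).} Replace $V$ by $V'\coloneqq V+\spann\{\mathbf{1}_\Omega\}$, with $\dim V'\le n+1$. Sample from the mixture $\varrho\coloneqq\frac12(1+\varrho_{{\rm L}})$, where $\varrho_{\rm L}$ is the Lewis density of $V'$ (after the reduction of Remark~\ref{rem:reduction}). This density satisfies $\varrho_{\rm L}\le 2\varrho$, so $\gamma=2$, and $\varrho\ge\frac12$. Theorem~\ref{thm:Prob_Tchak} applies for $m$ large enough, say $m\ge 134\cdot 2\,(n+1)\log(40(n+1))$, and yields with positive probability (hence for some realization) nodes $y_1,\ldots,y_m$ and weights $p_i\in[\frac{1}{2m\varrho(y_i)},\frac{3}{2m\varrho(y_i)}]$. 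These satisfy $0<p_i\le 3/m$, $\sum_i p_i=1$ (because $\mathbf{1}_\Omega\in V'$), and exactness on $V'$. Since $m$ may be as large as we like, we may assume $3/m\le 1/n$.

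\emph{Step 2 (compression).} Apply Lemma~\ref{lem:compress} to this $\mathbf{p}$. Take $\mathbf{L}\in\R^{d\times m}$ to have rows $(b_\ell(y_i))_{i\le m}$ for a basis $b_1,\ldots,b_d$ of $V'$ containing $\mathbf{1}_\Omega$, so $d\le n+1$ and the row space contains $(1,\ldots,1)$. The vectors $\mathbf{u}_i$ are irrelevant here, so set $\mathbf{u}_i=\mathbf{0}$ and $R=0$. Choose $k$ so that $k+d-1\le 2n$; with $d\le n+1$, the choice $k=n$ works. The lemma then gives $\mathbf{p}'$ with $\mathbf{L}\mathbf{p}'=\mathbf{L}\mathbf{p}$, which means exactness on $V'\supset V$ is preserved. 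It also gives at most $2n$ nonzero entries and $0\le p_i'\le\max\{p_i,1/n\}\le 1/n\le 3/n$. Padding with zero-weight nodes, if necessary, produces exactly $2n$ nodes.

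\emph{Main obstacle.} The key point to verify is that the compression lemma needs only $\sum_i p_i=1$ and the row-space condition, and that exactness on $V$ is exactly the linear constraint $\mathbf{L}\mathbf{p}'=\mathbf{L}\mathbf{p}$. This requires $\mathbf{1}_\Omega$ in the space, which is why we pass to $V'$, and the null-space reduction must be handled as in Remark~\ref{rem:reduction}. If $\mathbf{1}_\Omega$ already lies in $V$, then $d=n$, and $k=n+1$ would give $2n$ nodes with weights $\le 1/(n+1)$. In either case the stated bound $3/n$ is comfortably satisfied. A degenerate case such as $n=1$ with $V'$ two-dimensional is covered by the same count, $k+d-1\le 1+2-1=2$.
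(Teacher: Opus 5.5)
Your argument is correct and follows the paper's own route: Theorem~\ref{thm:Prob_Tchak} with $\varrho=\frac12(1+\varrho_{\rm L})$ and $\gamma=2$, followed by Lemma~\ref{lem:compress} with $k=n$ and $d\le n+1$. The one difference is how you satisfy the row-space condition of the lemma. You enlarge $V$ by $\mathbf{1}_\Omega$, so the weights already sum to one and can serve directly as $\mathbf{p}$. The paper instead keeps $V$, sets $p_i=w_i'\varrho(y_i)/\beta$, and uses an artificial row of ones together with rows $f_j(y_i)/\varrho(y_i)$; converting back costs it the factor $\beta/\varrho(y_i)\le 3$. Your variant avoids that factor and therefore gives the sharper bound $0\le w_i\le 1/n$, with $\sum_i w_i=1$ as well.
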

    \begin{proof}
    We first apply Theorem~\ref{thm:Prob_Tchak} with the probability density 
    \begin{equation*}
		\varrho\coloneqq \frac12\bigl( 1+\varrho_{\rm L} \bigr) ,
	\end{equation*}
    where $\varrho_{\rm L}$ denotes the $L_1$-Lewis density of $V$. The density $\varrho$ satisfies~\eqref{eq:rho_admissible} with $\gamma=2$.
    For $m\in\N$ large enough, we obtain a Richter-Tchakaloff rule $Q_{\mathbf{w}'}(Y)$ with nodes $Y=(y_1,\ldots,y_m)$  and positive weights $\mathbf{w}'=(w'_1,\ldots,w'_m)$ such that $ \frac{1}{2m\varrho(y_i)}\le w'_i \le \frac{3}{2m\varrho(y_i)}$.
    Recall that the nodes are hereby drawn independently according to $\varrho\mu$.
Next, for compression, we apply Lemma~\ref{lem:compress} with $k\coloneqq n$ and $d\coloneqq n+1$. We let $\mathbf p$ be the vector with entries $p_i\coloneqq w'_i\varrho(y_i)/\beta$, where $\beta\coloneqq \sum_{i=1}^m w_i'\varrho(y_i)\in(0,3/2]$. As matrix $\mathbf{L}$, we use $\mathbf{L}\in\R^{d\times m}$ with first row $(1,\ldots,1)$. The remaining rows are chosen as $(f_j(y_i)/\varrho(y_i))_{i=1}^m$ for some basis $f_1,\ldots,f_{n}$ of $V$.
		Lemma~\ref{lem:compress} then yields a vector $\mathbf{p}'=(p_1',\ldots,p_m')$, which we use to define $w_i\coloneqq \beta p_i'/\varrho(y_i)\ge0$. The node set $X$ shall consist of those nodes $y_i$ where $w_i\neq0$.
		Then $|X|\le k+d-1 = 2n$. Further, due to $\mathbf{L}\mathbf{p}'=\mathbf{L}\mathbf{p}$, 
		\begin{equation*}
			\sum_{i=1}^m w_i f_j(y_i)=\beta\sum_{i=1}^m p_i'\frac{f_j(y_i)}{\varrho(y_i)}
			=\beta\sum_{i=1}^m p_i\frac{f_j(y_i)}{\varrho(y_i)}=\sum_{i=1}^m w_i'f_j(y_i)=\int_\Omega f_j\dint\mu 
		\end{equation*}
        for $j=1,\ldots,n$.
	Hence, $Q_{\mathbf{w}}(X)$ is exact on $V$. In addition, $p_i'\le\max\{p_i,1/k \}=\max\{p_i,1/n\}$. In case  $p_i'\le p_i$, this yields $w_i \le w_i'\le \frac{3}{2m\varrho(y_i)} \le \frac{3}{m} \le \frac{3}{n}$. And also in case $p_i'\le 1/n$, we get
        \[
        w_i= \beta p_i'/\varrho(y_i) \le \frac{3}{n} . \qedhere
        \]
    \end{proof}

\noindent
Assuming that constants are integrated exactly by the rule $Q_{\mathbf{w}}(X)$ in the previous theorem, which, e.g., happens when $\mathbf{1}_\Omega\in V$, we can further record the following observation. 

    \begin{remark}
    If the weights $w_i$ in Theorem~\ref{thm:RT_bounded} sum to $1$, there always exist $\Theta(n)$ weights of size $w_i\asymp 1/n$. For instance, let $I:=\{i:w_i\ge 1/(3n)\}$. Then $|I|\ge n/8$, which follows from
    \[
     1-  \frac{2n-|I|}{3n} \le 1 - \sum_{i\notin I} w_i = \sum_{i\in I} w_i \le \frac{3|I|}{n}.  
    \]
    \end{remark}
        
	\subsection{A ``compressed'' version of  Theorem~\ref{thm:main2}}
	We use the vectors $\boldsymbol{\psi}_n(y)=(e_j(y))_{j>n}$ from~\eqref{eqdef:psi_n}, for which $\|\boldsymbol{\psi}_n(y)\|_2^2=E_n(y)$.
	
	\begin{proposition}
		\label{prop:compress}
		Let $\mu$ be a probability measure, let $n\in\N$, and let $\varrho_n$ be the mixture density~\eqref{mix_density}. Further, let $Q_{\mathbf{w}}(Y)$ be a positive quadrature rule with nodes $Y=(y_1,\ldots,y_m)\in D^m$ and weights $0\le w_i\le 3/(2m\varrho_n(y_i))$, which is exact on $V_n$. Then, for every $k\in\N$, there is a positive quadrature rule $Q_{\mathbf{w}'}(X)$ with nodes $X\subset Y$, $|X|\le k+n$, which is exact on $V_n$ and satisfies
		\begin{equation}
			\label{eq:compress_error}
			\sup_{f\in B(\cH)}
			\biggl|\int_{\Omega} f\dint\mu-Q_{\mathbf{w}'}(X)f\biggr|
			\le
			\sup_{f\in B(\cH)}
			\biggl|\int_{\Omega} f\dint\mu-Q_{\mathbf{w}}(Y)f\biggr|
			+\frac{9}{4}\sqrt{\frac{T_n}{k}\Bigl(1+\frac{n+1}{k}\Bigr)} .
		\end{equation}
	\end{proposition}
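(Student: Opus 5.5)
We apply Lemma~\ref{lem:compress} with a suitable normalization, mirroring the proof of Theorem~\ref{thm:RT_bounded}, and then use the error identity of Proposition~\ref{prop:err_decomp}.

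\emph{Setup.} Set $\beta\coloneqq\sum_{i=1}^m w_i\varrho_n(y_i)\in[0,3/2]$. If $\beta=0$, all weights vanish and the claim is trivial. Otherwise put
\begin{equation*}
p_i\coloneqq w_i\varrho_n(y_i)/\beta,\qquad \mathbf u_i\coloneqq\boldsymbol\psi_n(y_i)/\varrho_n(y_i).
\end{equation*}
Since $\varrho_n\ge E_n/(3T_n)$ and $\varrho_n\ge\frac13$, we get
\begin{equation*}
\|\mathbf u_i\|_2^2=\frac{E_n(y_i)}{\varrho_n(y_i)^2}\le 3T_n\cdot 3=9T_n ,
\end{equation*}
so $R\coloneqq3\sqrt{T_n}$ is admissible. (If $T_n=0$, i.e.\ $n\ge r$, the correction term vanishes and the argument below still goes through.) Take $d\coloneqq n+1$ and let $\mathbf L\in\R^{d\times m}$ have first row $(1,\ldots,1)$ and remaining rows $(\eta_j(y_i)/\varrho_n(y_i))_{i=1}^m$ for $j=1,\ldots,n$.

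\emph{Compression and exactness.} Lemma~\ref{lem:compress} yields $\mathbf p'$ with $p'_i\ge0$, at most $k+n$ nonzero entries, and $\mathbf L\mathbf p'=\mathbf L\mathbf p$. Define $w'_i\coloneqq\beta p'_i/\varrho_n(y_i)\ge0$ and let $X$ be the set of nodes with $w'_i\neq0$, so $|X|\le k+n$. Exactly as in Theorem~\ref{thm:RT_bounded}, $\mathbf L\mathbf p'=\mathbf L\mathbf p$ gives $\sum_i w'_i\eta_j(y_i)=\sum_i w_i\eta_j(y_i)$, hence $Q_{\mathbf w'}(X)$ is exact on $V_n$. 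Moreover $\sum_i p'_i=\sum_i p_i=1$, which is what Lemma~\ref{lem:compress} needs.

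\emph{Error bound.} Both rules are exact on $V_n$ and have nodes in $D$. By the identity in \eqref{eq:err_identity} and the triangle inequality,
\begin{equation*}
\mathrm{err}(Q_{\mathbf w'})\le\mathrm{err}(Q_{\mathbf w})+\Bigl\|\sum_i (w'_i-w_i)\boldsymbol\psi_n(y_i)\Bigr\|_2 .
\end{equation*}
The last term equals $\beta\,\|\sum_i(p'_i-p_i)\mathbf u_i\|_2$. Lemma~\ref{lem:compress} bounds it by
\begin{equation*}
\frac32\cdot 3\sqrt{T_n}\,\sqrt{\frac1k\Bigl(1+\frac{n+1}{k}\Bigr)} .
\end{equation*}

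\emph{Main obstacle: the constant.} This crude bound gives $\frac92$, not the claimed $\frac94$. To close the factor of $2$, I would improve the bound on $R$ by pairing the density and tail factors more carefully, using the mixture structure to get $\|\mathbf u_i\|_2^2\le 9T_n/4$. The natural tool is the AM--GM-type bound
\begin{equation*}
\varrho_n\ge\tfrac13\Bigl(1+\tfrac{E_n}{T_n}\Bigr)\ge\tfrac23\sqrt{E_n/T_n},
\end{equation*}
which gives $\varrho_n^2\ge\frac49E_n/T_n$ and hence
\begin{equation*}
\|\mathbf u_i\|_2^2=\frac{E_n(y_i)}{\varrho_n(y_i)^2}\le\frac{9T_n}{4},
\end{equation*}
so $R=\frac32\sqrt{T_n}$. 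Combined with $\beta\le\frac32$, this yields exactly the factor $\frac94$ in~\eqref{eq:compress_error}.
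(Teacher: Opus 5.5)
Your final argument is correct and is essentially the paper's proof. You normalize by $\beta=\sum_i w_i\varrho_n(y_i)\le\frac32$, use the AM--GM bound $\varrho_n\ge\frac23\sqrt{E_n/T_n}$ to get $R=\frac32\sqrt{T_n}$ (your first estimate $R=3\sqrt{T_n}$ is simply superseded), apply Lemma~\ref{lem:compress}, and conclude with \eqref{eq:err_identity} and the triangle inequality. The only slip is that for finite rank $r<n$ the functions $\eta_j$, $j>r$, do not exist; take the rows of $\mathbf L$ from a basis of $V_n$ with $n'=\dim V_n\le n$ elements, so $d=n'+1\le n+1$, as the paper does.
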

	
	\begin{proof}
		We may assume $\mathbf{w}\neq\mathbf{0}$, and set $\beta\coloneqq \sum_{i=1}^m w_i\varrho_n(y_i)\in(0,3/2]$.
		If $n\ge r$, then $E_n\equiv0$ and $T_n=0$.
		Otherwise, since $\mu(\Omega)=1$, the mixture density~\eqref{mix_density} and the inequality of arithmetic and geometric means give $\varrho_n\ge\frac13(1+E_n/T_n)\ge\frac23\sqrt{E_n/T_n}$.
		In both cases, the vectors $\mathbf{u}_i\coloneqq \boldsymbol{\psi}_n(y_i)/\varrho_n(y_i)$, $i=1,\ldots,m$, satisfy
		\begin{equation*}
			%\label{eq:compress_R}
			\|\mathbf{u}_i\|_2=\frac{\sqrt{E_n(y_i)}}{\varrho_n(y_i)}\le \frac32\sqrt{T_n}\eqqcolon R .
		\end{equation*}
		Let $f_1,\ldots,f_{n'}$ be a basis of $V_n$, with $n'\coloneqq\dim V_n\le n$.
		We apply Lemma~\ref{lem:compress} to the vector $\mathbf p$ with $p_i\coloneqq w_i\varrho_n(y_i)/\beta$ and the matrix $\mathbf{L}\in\R^{d\times m}$ with first row $(1,\ldots,1)$ and remaining rows $(f_j(y_i)/\varrho_n(y_i))_{i=1}^m$, $j=1,\ldots,n'$, so that $d\coloneqq n'+1$.
		Based on the resulting $\mathbf{p}'$, we set $w_i'\coloneqq \beta p_i'/\varrho_n(y_i)\ge0$, and let $X$ consist of those nodes $y_i$ with $w_i'\neq0$.
		Then $|X|\le k+n'\le k+n$ and due to $\mathbf{L}\mathbf{p}'=\mathbf{L}\mathbf{p}$, for $j=1,\ldots,n'$, it holds that
		\begin{equation*}
			\sum_{i=1}^m w_i'f_j(y_i)=\beta\sum_{i=1}^m p_i'\frac{f_j(y_i)}{\varrho_n(y_i)}
			=\beta\sum_{i=1}^m p_i\frac{f_j(y_i)}{\varrho_n(y_i)}=\sum_{i=1}^m w_if_j(y_i)=\int_\Omega f_j\dint\mu ,
		\end{equation*}
		namely that $Q_{\mathbf{w}'}(X)$ is exact on $V_n$.
		
		For the error estimate, note that $X\subset Y\subset D$ and that both rules are exact on $V_n$.
		Since $(w_i-w_i')\boldsymbol{\psi}_n(y_i)=\beta(p_i-p_i')\mathbf{u}_i$, the $\ell_2$-identity in Proposition~\ref{prop:err_decomp} and the triangle inequality yield
		\begin{align*}
			\sup_{f\in B(\cH)}\biggl|\int_\Omega f\dint\mu-Q_{\mathbf{w}'}(X)f\biggr|
			&=\biggl\|\int_\Omega\boldsymbol{\psi}_n\dint\mu-\sum_{i=1}^m w_i'\boldsymbol{\psi}_n(y_i)\biggr\|_2\notag\\
			&\le\biggl\|\int_\Omega\boldsymbol{\psi}_n\dint\mu-\sum_{i=1}^m w_i\boldsymbol{\psi}_n(y_i)\biggr\|_2+\biggl\|\sum_{i=1}^m(w_i-w_i')\boldsymbol{\psi}_n(y_i)\biggr\|_2\notag\\
			&=\sup_{f\in B(\cH)}\biggl|\int_\Omega f\dint\mu-Q_{\mathbf{w}}(Y)f\biggr|+\beta \biggl\|\sum_{i=1}^m(p_i-p_i')\mathbf{u}_i\biggr\|_2 .
		\end{align*}
		By Lemma~\ref{lem:compress} and $d\le n+1$, the second summand is at most $\frac32\sqrt{(R^2/k)(1+(n+1)/k)}=\frac{9}{4}\sqrt{(T_n/k)(1+(n+1)/k)}$.
	\end{proof}

	\subsection{Proof of Theorem~\ref{thm:main1}}
	
	For the first part of the construction, let us assume that $\mu$ is a probability measure.
	We apply Corollary~\ref{cor:main2} with $\delta\coloneqq\frac12$ and with $C'\coloneqq400$ in place of its constant $C$ (not to be confused with the constant in Theorem~\ref{thm:main1}), i.e., to $m\coloneqq\lceil 804C'n\log(24n)\rceil$ random nodes.
	As the event happens with positive probability and the randomly drawn nodes lie in $D$ almost surely, there exist nodes $Y=(y_1,\ldots,y_m)\in D^m$ and weights $\mathbf{w}$ satisfying~\eqref{eq:main2_weights} such that $Q_{\mathbf{w}}(Y)$ is exact on $V_n^\prime\supset V_n$ and has worst-case error at most $(1+\sqrt3) \sigma_{n+1}+\sqrt{\log(\ee C')/C'}\sqrt{T_n/n}$.
	Since $m$ only determines the size of the candidate pool, the constant $C'$ does not affect the final number of nodes.
	Hence, Proposition~\ref{prop:compress} applied with $k = n+1$ yields a positive quadrature rule $Q_{\mathbf{w}'}(X)$ with at most $k+n=2n+1$ nodes $X\subset Y$.
	By~\eqref{eq:compress_error} and Corollary~\ref{cor:main2}, its worst-case error on $B(\cH)$ is bounded by
	\begin{equation*}
		\underbrace{(1+\sqrt3) \sigma_{n+1}+\sqrt{\frac{\log(\ee C')}{C'}}\sqrt{\frac{T_n}{n}}}_{\text{Corollary~\ref{cor:main2}}}
		+\frac{9}{2\sqrt2}\sqrt{\frac{T_n}{n}}
		\le(1+\sqrt3) \sigma_{n+1}+A\sqrt{\frac{T_n}{n}},
	\end{equation*}
	where $A\coloneqq \sqrt{\log(400\ee)/400}+\frac{9}{2\sqrt2}\le 3.315$.
	Let $C_2\coloneqq19$. As there are at least $(1-\frac1{C_2})n$ indices $j$ with $\frac{n}{C_2}\le j\le n$, we have \smash{$(1-\frac1{C_2})\sigma_{n+1}^2\le\frac1n\sum_{n/C_2\le j\le n}\sigma_j^2$}.
	With the Cauchy-Schwarz inequality 
    \begin{equation*}
	    (ax+by)^2\le\Bigl(\frac{a^2}{1-1/C_2}+b^2\Bigr)\biggl(\Bigl(1-\frac1{C_2}\Bigr)x^2+y^2\biggr),
	\end{equation*}
    and since the index ranges $n/C_2\le j\le n$ and $j>n$ are disjoint, this gives
	\begin{equation}\label{eq:kappa3n}
		\kappa_{2n+1}^+(\cH,\mu)^2
		\le\Bigl(\frac{(1+\sqrt3)^2}{1-1/C_2}+A^2\Bigr)\biggl(\Bigl(1-\frac1{C_2}\Bigr)\sigma_{n+1}^2+\frac{1}{n}\sum_{j>n}\sigma_j^2\biggr)
		\le\frac{C_2}{n}\sum_{j\ge n/C_2}\sigma_j^2,
	\end{equation}
	where we used $\frac{19}{18}(1+\sqrt3)^2+A^2\le18.9\le C_2$.
	
	For $0<\mu(\Omega)<\infty$, we apply this construction to the probability measure $\tilde{\mu}\coloneqq \mu/\mu(\Omega)$, whose singular numbers are $\sigma_j/\sqrt{\mu(\Omega)}$, and use $\kappa_m^+(\cH,\mu)=\mu(\Omega) \kappa_m^+(\cH,\tilde\mu)$.
	This gives an additional factor $\mu(\Omega)$ on the right-hand side, and the case $\mu(\Omega)=0$ is trivial.
	It remains to bring this bound into the form of Theorem~\ref{thm:main1}, i.e., to remove the constant $C_2$ and to start the sum at $n$ instead of $n/C_2$.
	To this end, we trade the constant for a larger number of nodes and apply~\eqref{eq:kappa3n} with $C_2n$ in place of $n$.
	Since the factor $C_2$ cancels, we get
	\begin{equation*}
		\kappa_{2C_2n+1}^+(\cH,\mu)^2\le\frac{\mu(\Omega)}{n}\sum_{j\ge n}\sigma_j^2 .
	\end{equation*}
	With $C_3\coloneqq2C_2+1=39$, the monotonicity of $\kappa_m^+(\cH,\mu)$ in $m$ proves Theorem~\ref{thm:main1}.
	\hfill\qed
	%\end{proof}
	
	\begin{remark}
\label{rem:nonconstructive}
Lemma~\ref{lem:compress} can also be combined with Richter's theorem~\cite[Thm.~4]{Richter57} instead of Theorem~\ref{thm:main2}.
Let $\phi\coloneqq(1+\sqrt5)/2$ denote the golden ratio, so that $1+\phi=\phi^2=(3+\sqrt5)/2\approx2.62$.
This yields a non-constructive variant of Theorem~\ref{thm:main1} with constant $C=\phi^2$, namely
\begin{equation}
\label{eq:nonconstructive}
\kappa^{+}_{m}(\cH,\mu)\le\biggl(\frac{\mu(\Omega)}{n}\sum_{j\ge n}\sigma_j^2\biggr)^{1/2}\qquad\text{for all } n\in\N \text{ and } m\ge\phi^2\cdot n .
\end{equation}
As in the proof of Theorem~\ref{thm:main1}, it suffices to consider a probability measure $\mu$.
We show~\eqref{eq:nonconstructive} with $n+1$ in place of $n$ for $n\in\N_0$, i.e., $\kappa^{+}_{m}(\cH,\mu)^2\le T_n/(n+1)$ for all $m\ge\phi^2\cdot(n+1)$.
If $n\ge r$, then $E_n\equiv0$ and $T_n=0$, and we set $\varrho\coloneqq1$.
Otherwise, we set $\varrho\coloneqq\frac12(1+E_n/T_n)$, and the inequality of arithmetic and geometric means gives \smash{$\varrho\ge\sqrt{E_n/T_n}$}.
In both cases, $\varrho\ge\frac12$ is a probability density, and the vectors $\mathbf{u}_n(x)\coloneqq \boldsymbol{\psi}_n(x)/\varrho(x)$ satisfy $\|\mathbf{u}_n(x)\|_2=\sqrt{E_n(x)}/\varrho(x)\le\sqrt{T_n}\eqqcolon R$.
Let $f_1,\ldots,f_{n'}$ be a basis of $V_n$, where $n'\coloneqq\dim V_n\le n$, and let $\mathbf{P}_N$ denote the orthogonal projection onto the first $N$ coordinates of $\ell_2(\N)$.
The functions $1$ and $f_j/\varrho$, $j=1,\ldots,n'$, are integrable with respect to $\varrho\mu$, since $\int_\Omega|f_j|\dint\mu<\infty$.
Moreover, since $\int_\Omega\|\mathbf{u}_n\|_2^2 \varrho\dint\mu=\int_\Omega E_n/\varrho\dint\mu\le 2T_n$, the coordinates of $\mathbf{P}_N\mathbf{u}_n$ and the function $\|(\mathbf{I}-\mathbf{P}_N)\mathbf{u}_n\|_2^2$ are integrable with respect to $\varrho\mu$ as well.
As $\mu(\Omega\setminus D)=0$, Richter's theorem applied to $D$ equipped with $\varrho\mu$ yields nodes $x_1,\ldots,x_m\in D$ and a vector $\mathbf{p}=(p_1,\ldots,p_m)$ with $p_i\ge0$ and $\sum_{i=1}^m p_i=1$ such that $\sum_{i=1}^m p_i g(x_i)=\int_\Omega g \varrho\dint\mu$ for each of these functions $g$.
Hence, the rule with weights $w_i\coloneqq p_i/\varrho(x_i)$ is exact on $V_n$, and $w_i\boldsymbol{\psi}_n(x_i)=p_i\mathbf{u}_n(x_i)$.
Since, in addition, $\sum_{i=1}^m p_i\mathbf{P}_N\mathbf{u}_n(x_i)=\mathbf{P}_N\int_\Omega\boldsymbol{\psi}_n\dint\mu$, it follows from~\eqref{eq:err_identity} and the Cauchy-Schwarz inequality with respect to $\mathbf{p}$ that its worst-case error satisfies
\begin{equation*}
\biggl\|\int_\Omega\boldsymbol{\psi}_n\dint\mu-\sum_{i=1}^m p_i\mathbf{u}_n(x_i)\biggr\|_2
\le\biggl(\int_\Omega\|(\mathbf{I}-\mathbf{P}_N)\mathbf{u}_n\|_2^2 \varrho\dint\mu\biggr)^{1/2}+\biggl\|(\mathbf{I}-\mathbf{P}_N)\int_\Omega\boldsymbol{\psi}_n\dint\mu\biggr\|_2\eqqcolon\varepsilon_N .
\end{equation*}
Here, $\varepsilon_N\to0$ as $N\to\infty$, by dominated convergence with majorant $\|\mathbf{u}_n\|_2^2 \varrho$ for the first term, and since $\int_\Omega\boldsymbol{\psi}_n\dint\mu\in\ell_2(\N)$ for the second.
Proceeding as in the proof of Proposition~\ref{prop:compress} with $\beta=1$, $\varrho$ in place of $\varrho_n$, $R=\sqrt{T_n}$, and $k\coloneqq\lceil\phi\cdot (n+1)\rceil$, we obtain a positive rule which is exact on $V_n$ and has at most $k+n'<\phi\cdot(n+1)+1+n=\phi^2\cdot(n+1)$ nodes in $D$.
Its worst-case error is at most $\varepsilon_N+\sqrt{(R^2/k)(1+(n'+1)/k)}\le\varepsilon_N+\sqrt{T_n/(n+1)}$, where we used $n'\le n$ and the fact that $k\ge\phi\cdot(n+1)$ implies $k^2\ge(n+1)(k+n+1)$, as $\phi$ is the positive root of $t^2-t-1$.
Since $N$ is arbitrary, the monotonicity of the numbers $\kappa_m^+(\cH,\mu)$ in $m$ gives $\kappa_m^+(\cH,\mu)^2\le T_n/(n+1)$ for all $m\ge\phi^2\cdot(n+1)$, which is~\eqref{eq:nonconstructive} with $n+1$ in place of $n$, since $T_n=\sum_{j\ge n+1}\sigma_j^2$.

In contrast to this existence argument, the route via Theorem~\ref{thm:main2} provides an explicit sampling distribution, independent nodes, and a computable pool of $O(n\log(\ee n))$ candidate nodes with controlled weights, from which the final rule is obtained by the finite procedure of Lemma~\ref{lem:compress}.
\end{remark}

	\noindent\textbf{Acknowledgment.} TU is supported by the German Research Foundation (DFG) with grant Ul403/4-1. Several of the presented relations were developed with the assistance of LLMs by Anthropic and OpenAI. Additional verification, presentation and writing were carefully carried out by the authors, who take full responsibility for the results. 
    
    %\bibliography{literature_10}
	%\bibliographystyle{abbrv}

	\begin{appendix}
		\section{Tools from probability}
        %\label{sec:ProbTools}
		
		The relevant tools from probability theory  used in this article are collected in this appendix. 
		The first theorem is a classical contraction principle~\cite[Thm.~4.12]{LedouxTalagrand}, comparing the expectation of suprema of related stochastic processes.
		
		\begin{theorem}[Contraction principle]
			\label{thmA:Contraction}
			Let $F\colon \R_+\to\R_+$ be convex and increasing, and let
			$\varphi_i\colon\R\to\R$, $i=1,\ldots,n$, satisfy $\varphi_i(0)=0$ and $|\varphi_i(s)-\varphi_i(t)|\le |s-t|$ for all $s,t\in\R$.
			If $\boldsymbol{\epsilon}\coloneqq (\epsilon_1,\ldots,\epsilon_n)$ are independent Rademacher
			variables, then, for every bounded set $T\subset\R^n$, it holds
			\begin{equation*}
				\E_{\boldsymbol{\epsilon}}\biggl[
				F\Big(
				\frac12
				\sup_{t\in T}
				\Big|
				\sum_{i=1}^n
				\epsilon_i\varphi_i(t_i)
				\Big|
				\Big)\biggr]
				\le
				\E_{\boldsymbol{\epsilon}}
				\biggl[F\Big(
				\sup_{t\in T}
				\Big|
				\sum_{i=1}^n
				\epsilon_i t_i
				\Big|
				\Big)\biggr].
			\end{equation*}
		\end{theorem}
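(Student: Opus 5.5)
The plan is the classical Ledoux–Talagrand argument: first prove a comparison without absolute values, then recover the stated form through convexity and the symmetry of the Rademacher vector.

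\emph{Step 1 (one-sided comparison).} Let $G\colon\R\to\R$ be convex and increasing. I will show that
\begin{equation*}
\E_{\boldsymbol\epsilon}G\Bigl(\sup_{t\in T}\sum_{i=1}^n\epsilon_i\varphi_i(t_i)\Bigr)\le\E_{\boldsymbol\epsilon}G\Bigl(\sup_{t\in T}\sum_{i=1}^n\epsilon_it_i\Bigr).
\end{equation*}
The contractions are replaced by the identity one coordinate at a time. Condition on $\epsilon_2,\ldots,\epsilon_n$ and absorb the remaining terms into a function $a(t)$. It then suffices to show, for every bounded set of pairs $(a(t),t_1)$, that
\begin{equation*}
G\Bigl(\sup_t\bigl(a(t)+\varphi(t_1)\bigr)\Bigr)+G\Bigl(\sup_t\bigl(a(t)-\varphi(t_1)\bigr)\Bigr)\le G\Bigl(\sup_t\bigl(a(t)+t_1\bigr)\Bigr)+G\Bigl(\sup_t\bigl(a(t)-t_1\bigr)\Bigr).
\end{equation*}
By an $\varepsilon$-approximation of the suprema, this reduces to showing, for two fixed points $t,s$ that nearly attain the two suprema on the left, that
\begin{equation*}
G\bigl(a(t)+\varphi(t_1)\bigr)+G\bigl(a(s)-\varphi(s_1)\bigr)
\end{equation*}
is bounded by the right-hand side. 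Iterating over the coordinates $i=1,\ldots,n$, with Fubini at each step, gives the claim.

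\emph{Step 2 (the two-point inequality).} This is the main obstacle. I will run the standard case analysis according to the signs of $t_1-s_1$ and of $\varphi(t_1)-\varphi(s_1)$, and to which of $a(t)+\varphi(t_1)$ and $a(s)-\varphi(s_1)$ is larger. In the easy cases, monotonicity of $G$ together with $|\varphi(t_1)|\le|t_1|$ (which uses $\varphi(0)=0$) or $|\varphi(t_1)-\varphi(s_1)|\le|t_1-s_1|$ lets me bound each left term directly by the corresponding right term, evaluated at $t$ or at $s$. In the remaining case I will use the convexity inequality
\begin{equation*}
G(x+u)-G(x)\le G(y+u)-G(y)\qquad\text{for } x\le y,\ u\ge0,
\end{equation*}
with $u=|t_1-s_1|-|\varphi(t_1)-\varphi(s_1)|\ge0$, to shift mass between the two terms.

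\emph{Step 3 (absolute values).} Write
\begin{equation*}
\sup_t\Bigl|\sum_i\epsilon_i\varphi_i(t_i)\Bigr|\le\Bigl(\sup_t\sum_i\epsilon_i\varphi_i(t_i)\Bigr)^++\Bigl(\sup_t\sum_i(-\epsilon_i)\varphi_i(t_i)\Bigr)^+.
\end{equation*}
Since $F$ is increasing, and by convexity,
\begin{equation*}
F\Bigl(\tfrac12\sup_t\Bigl|\sum_i\epsilon_i\varphi_i(t_i)\Bigr|\Bigr)\le\tfrac12F\Bigl(\Bigl(\sup_t\sum_i\epsilon_i\varphi_i(t_i)\Bigr)^+\Bigr)+\tfrac12F\Bigl(\Bigl(\sup_t\sum_i(-\epsilon_i)\varphi_i(t_i)\Bigr)^+\Bigr).
\end{equation*}
The map $G(x)\coloneqq F(x^+)$ is convex and increasing on $\R$. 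The vector $-\boldsymbol\epsilon$ has the same law as $\boldsymbol\epsilon$, so both terms on the right have the same expectation. Step 1 with this $G$ bounds that expectation by
\begin{equation*}
\E_{\boldsymbol\epsilon}F\Bigl(\Bigl(\sup_t\sum_i\epsilon_it_i\Bigr)^+\Bigr)\le\E_{\boldsymbol\epsilon}F\Bigl(\sup_t\Bigl|\sum_i\epsilon_it_i\Bigr|\Bigr).
\end{equation*}
Summing the two halves gives the asserted inequality. Boundedness of $T$ ensures that all suprema are finite, so every expectation above is well defined.
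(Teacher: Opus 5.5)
Your architecture is exactly the Ledoux--Talagrand proof of~\cite[Thm.~4.12]{LedouxTalagrand}, which the paper cites without proof: a one-sided comparison for convex increasing $G$ on $\R$, coordinatewise conditioning, a two-point inequality, and then absolute values via $G(x)=F(x^+)$ and the symmetry of $\boldsymbol\epsilon$. Steps~1 and~3 are fine. The gap is Step~2, which carries all the content. You only announce a case analysis there, and the mechanism you announce does not close it.

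Take $T$ to be a single point with $a(t)=0$, $t_1=1$, $\varphi\equiv0$, so $s=t$. The two-point inequality then reads $2G(0)\le G(1)+G(-1)$.
\begin{itemize}
\item No term-by-term monotonicity bound gives this, because the second supremum is $G(-1)<G(0)$ for, e.g., $G=\exp$.
\item Your convexity step is vacuous, since $u=|t_1-s_1|-|\varphi(t_1)-\varphi(s_1)|=0$.
\item The degeneracy is not the issue. Take two points with $a\equiv0$, $t_1=3$, $s_1=2$, $\varphi\equiv0$; both are exact maximizers of both suprema. You need $2G(0)\le G(3)+G(-2)$, and a single convexity step of size $u=1$ plus monotonicity cannot produce it.
\end{itemize}
The increment has to measure how far $\varphi$ is from $\pm\mathrm{id}$ at the points, i.e.\ $s_1-\varphi(s_1)$ or $t_1+\varphi(t_1)$. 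The cases must be split by the signs of $t_1$ and $s_1$ themselves, which is where $\varphi(0)=0$ enters. The quantity $u$ appears at most as slack absorbed by monotonicity. Likewise, the ordering that matters is between $a(t)+\varphi(t_1)$ and $a(s)-s_1$, not between $a(t)+\varphi(t_1)$ and $a(s)-\varphi(s_1)$.

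Here is what closes it. Write $A=a(t)$, $B=a(s)$, $x=t_1$, $y=s_1$, and prove
\[
G(A+\varphi(x))+G(B-\varphi(y))\le\sup_{r\in T}G(a(r)+r_1)+\sup_{r\in T}G(a(r)-r_1)
\]
for \emph{all} $t,s\in T$. This also removes the need for near-maximizers.
\begin{itemize}
\item If $x\ge0\ge y$, then $\varphi(x)\le x$ and $-\varphi(y)\le -y$, so you get the bound $G(A+x)+G(B-y)$.
\item If $x\le0\le y$, you get the cross bound $G(A-x)+G(B+y)$ in the same way.
\item If $x\ge y\ge0$ and $A+\varphi(x)\le B-y$, the left side is at most $G(B-y)+G(B+y)$, using $-\varphi(y)\le y$.
\item If $x\ge y\ge0$ and $A+\varphi(x)>B-y$, apply convexity with base points $B-y<A+\varphi(x)$ and increment $y-\varphi(y)\ge0$. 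Then apply monotonicity, using $\varphi(x)-\varphi(y)\le x-y$:
\[
G(B-\varphi(y))-G(B-y)\le G\bigl(A+\varphi(x)+y-\varphi(y)\bigr)-G(A+\varphi(x))\le G(A+x)-G(A+\varphi(x)).
\]
\item If $0\le x\le y$, the argument is symmetric. Either $B-\varphi(y)\le A-x$, which gives the bound $G(A+x)+G(A-x)$. Or convexity with increment $x+\varphi(x)\ge0$ from the base point $A-x$, together with $\varphi(x)-\varphi(y)\le y-x$, gives the bound $G(B+y)+G(A-x)$.
\item If $x,y\le0$, reduce to $x,y\ge0$: replace $\varphi$ by $z\mapsto-\varphi(-z)$, negate the first coordinate throughout $T$, and exchange $t$ and $s$.
\end{itemize}
Finally, take suprema over $t$ and $s$ separately, and use $\sup_r G(c_r)=G(\sup_r c_r)$ for the continuous nondecreasing $G$. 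This yields the two-point inequality you stated in Step~1.
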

		
		\noindent
		The next result is the symmetrization principle~\cite[Lem.~8.4]{FoRa13}, see also~\cite[Lem.~6.3]{LedouxTalagrand}.
		
		\begin{theorem}[Symmetrization]
			\label{thmA:Symm}
			Let $\mathbf{u}_i$, $i=1,\ldots,n$, be independent random vectors in a finite-dimensional vector space endowed with a norm $\|\cdot\|$, and let $\mathbf{x}_i\coloneqq\E\mathbf{u}_i$.
			If $\boldsymbol{\epsilon}\coloneqq(\epsilon_1,\ldots,\epsilon_n)$ are independent Rademacher variables, independent of $\{\mathbf{u}_i\}_{i\le n}$, then, for $1\le p<\infty$,
			\begin{equation*}
				\E_{\mathbf{u}}\Bigl\|\sum_{i=1}^n(\mathbf{u}_i-\mathbf{x}_i)\Bigr\|^p\le2^p \E_{\mathbf{u},\boldsymbol\epsilon}\Bigl\|\sum_{i=1}^n\epsilon_i\mathbf{u}_i\Bigr\|^p .
			\end{equation*}
		\end{theorem}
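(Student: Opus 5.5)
The plan is the standard decoupling argument with an independent copy, followed by a sign flip that uses symmetry. We may assume $\E\|\mathbf u_i\|<\infty$ for all $i$, since this is implicit in the definition $\mathbf x_i=\E\mathbf u_i$. We may also assume the right-hand side is finite, since otherwise there is nothing to prove. Finite dimension makes all norms and all measurability questions unproblematic.

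\emph{Step 1 (decoupling).} Let $\mathbf u_1',\ldots,\mathbf u_n'$ be an independent copy of $\mathbf u_1,\ldots,\mathbf u_n$, independent also of $\boldsymbol\epsilon$. Then $\mathbf x_i=\E_{\mathbf u'}\mathbf u_i'$. For fixed $\mathbf u$ we have $\sum_i(\mathbf u_i-\mathbf x_i)=\E_{\mathbf u'}\sum_i(\mathbf u_i-\mathbf u_i')$. The map $\mathbf v\mapsto\|\mathbf v\|^p$ is convex for $p\ge1$, so Jensen's inequality (applied conditionally on $\mathbf u$) gives
\begin{equation*}
\Bigl\|\sum_{i=1}^n(\mathbf u_i-\mathbf x_i)\Bigr\|^p\le\E_{\mathbf u'}\Bigl\|\sum_{i=1}^n(\mathbf u_i-\mathbf u_i')\Bigr\|^p .
\end{equation*}
Taking $\E_{\mathbf u}$ and using Fubini, we obtain $\E_{\mathbf u}\|\sum_i(\mathbf u_i-\mathbf x_i)\|^p\le\E_{\mathbf u,\mathbf u'}\|\sum_i(\mathbf u_i-\mathbf u_i')\|^p$.

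\emph{Step 2 (inserting signs).} Each difference $\mathbf u_i-\mathbf u_i'$ is symmetric, because swapping $\mathbf u_i$ and $\mathbf u_i'$ preserves the joint law, and the differences are independent across $i$. Hence, for every fixed sign vector $\boldsymbol\epsilon\in\{\pm1\}^n$, the vector $(\epsilon_i(\mathbf u_i-\mathbf u_i'))_{i\le n}$ has the same joint distribution as $(\mathbf u_i-\mathbf u_i')_{i\le n}$. Averaging over the independent Rademacher vector $\boldsymbol\epsilon$ then yields
\begin{equation*}
\E_{\mathbf u,\mathbf u'}\Bigl\|\sum_{i=1}^n(\mathbf u_i-\mathbf u_i')\Bigr\|^p=\E_{\mathbf u,\mathbf u',\boldsymbol\epsilon}\Bigl\|\sum_{i=1}^n\epsilon_i(\mathbf u_i-\mathbf u_i')\Bigr\|^p .
\end{equation*}

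\emph{Step 3 (splitting).} Apply the triangle inequality together with convexity in the form $\|\mathbf a-\mathbf b\|^p\le2^{p-1}(\|\mathbf a\|^p+\|\mathbf b\|^p)$. Here $\mathbf a=\sum_i\epsilon_i\mathbf u_i$ and $\mathbf b=\sum_i\epsilon_i\mathbf u_i'$ have the same distribution. This gives the bound $2^{p-1}\cdot2\,\E\|\sum_i\epsilon_i\mathbf u_i\|^p=2^p\E_{\mathbf u,\boldsymbol\epsilon}\|\sum_i\epsilon_i\mathbf u_i\|^p$, which is the claim.

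There is no real obstacle in this argument. The only point needing care is Step 2: we must check that a fixed sign pattern preserves the \emph{joint} law of the independent symmetric differences, and then justify the conditioning on $\boldsymbol\epsilon$ via Fubini. Both are routine because everything is nonnegative and finite-dimensional.
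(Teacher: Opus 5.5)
Your proof is correct and is the standard argument: an independent copy with conditional Jensen, sign insertion via the symmetry of $\mathbf u_i-\mathbf u_i'$, and then convexity of $t\mapsto t^p$ to split the sum, which gives the factor $2^{p-1}\cdot 2=2^p$. The paper does not prove this statement itself but quotes it from Foucart--Rauhut (Lemma~8.4) and Ledoux--Talagrand (Lemma~6.3), and those proofs follow essentially the same route.
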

		
		\noindent
		Relations of tail bounds to moment estimates are standard, see e.g.~\cite[Prop.~7.13]{FoRa13} and~\cite[Cor.~4.3]{MoStUl24}. A converse of~\cite[Prop.~7.13]{FoRa13} is provided by~\cite[Prop.~7.11]{FoRa13}, which we recite below.
		
		\begin{proposition}[Moments and tails]
			\label{propA:MomTail}
			Suppose that $Z$ is a random variable satisfying
			\begin{equation*}
				\bigl(\E|Z|^p\bigr)^{1/p}\le\alpha\beta^{1/p}p^{1/\nu}\qquad\text{for all }p\in[p_0,p_1]
			\end{equation*}
			for some constants $\alpha,\beta,\nu,p_1>p_0>0$. Then
			\begin{equation*}
				\Prob\bigl(|Z|\ge{\ee}^{1/\nu}\alpha u\bigr)\le\beta {\ee}^{-u^\nu/\nu}\qquad\text{for all }u\in\bigl[p_0^{1/\nu},p_1^{1/\nu}\bigr].
			\end{equation*}
		\end{proposition}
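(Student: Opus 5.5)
The plan is the standard Markov-type argument, optimized over $p$ within the admissible range. Fix $u\in[p_0^{1/\nu},p_1^{1/\nu}]$ and set $p\coloneqq u^\nu$, so that $p\in[p_0,p_1]$ and the moment hypothesis applies at this particular $p$. Markov's inequality applied to $|Z|^p$ then gives
\begin{equation*}
\Prob\bigl(|Z|\ge \ee^{1/\nu}\alpha u\bigr)\le \frac{\E|Z|^p}{(\ee^{1/\nu}\alpha u)^p}\le \frac{\alpha^p\beta\, p^{p/\nu}}{\ee^{p/\nu}\alpha^p u^p}.
\end{equation*}

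The key step is the choice $p=u^\nu$, which makes $p^{p/\nu}=u^p$. This cancels the factor $u^p$ in the denominator exactly, so the right-hand side becomes $\beta\ee^{-p/\nu}=\beta\ee^{-u^\nu/\nu}$, which is the claimed bound.

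There is no real obstacle here. The only care needed is to check that the map $u\mapsto u^\nu$ sends the interval $[p_0^{1/\nu},p_1^{1/\nu}]$ exactly onto $[p_0,p_1]$, which holds since $\nu>0$. The positivity of $\alpha$ and $u$ justifies dividing by $(\ee^{1/\nu}\alpha u)^p$, and $u>0$ holds because $p_0>0$. For $p\ge1$ one may equivalently phrase the estimate through $\|Z\|_p$, but Markov's inequality for $|Z|^p$ is valid for every $p>0$, so no separate case is required.
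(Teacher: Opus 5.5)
Your proof is correct: Markov's inequality applied to $|Z|^p$ at the single exponent $p=u^\nu\in[p_0,p_1]$ makes the factor $p^{p/\nu}=u^p$ cancel and leaves $\beta\ee^{-u^\nu/\nu}$. The paper gives no proof of its own and cites Foucart--Rauhut, Prop.~7.11, whose proof is exactly this Markov argument with the choice $p=u^\nu$.
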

		
		%\subsection{Concentration results}
		
		\noindent
		We proceed with concentration results for sums of bounded random variables.
		The following bound for the moment generating function is the key step in the proof of Bernstein's inequality~\cite[Cor.~7.31]{FoRa13}, see the proof of~\cite[Thm.~7.30]{FoRa13} together with~\cite[(7.42)]{FoRa13}.
		
		\begin{lemma}[Bernstein's moment generating function bound]
			\label{lemA:BernMGF}
			Let $\{X_i\}_{i\le n}$ be independent real-valued centered random variables such that $|X_i|\le L$ almost surely and $\E X_i^2\le\sigma_i^2$ for $i=1,\ldots,n$, with constants $L,\sigma_i>0$, and let $\sigma^2\coloneqq\sum_{i=1}^n\sigma_i^2$.
			Then, for all $0<\theta<3/L$,
			\begin{equation*}
				\E\exp\Bigl(\theta\sum_{i=1}^nX_i\Bigr)\le\exp\Bigl(\frac{\theta^2\sigma^2}{2(1-L\theta/3)}\Bigr).
			\end{equation*}
		\end{lemma}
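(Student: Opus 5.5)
The plan is to follow the standard Chernoff route that underlies Bernstein's inequality. For $0<\theta<3/L$ and $S\coloneqq\sum_{i=1}^n X_i$, independence gives $\E\exp(\theta S)=\prod_{i=1}^n\E\exp(\theta X_i)$. It therefore suffices to prove, for each $i$,
\begin{equation*}
\E\exp(\theta X_i)\le\exp\Bigl(\frac{\theta^2\sigma_i^2}{2(1-L\theta/3)}\Bigr).
\end{equation*}
Multiplying these bounds then produces exactly the claimed bound with $\sigma^2=\sum_i\sigma_i^2$.

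For a single variable, expand the exponential as a power series:
\begin{equation*}
\E\exp(\theta X_i)=1+\theta\E X_i+\sum_{k\ge2}\frac{\theta^k\E X_i^k}{k!}.
\end{equation*}
The linear term vanishes because $X_i$ is centered. The interchange of sum and expectation is justified by dominated convergence, since $|X_i|\le L$ almost surely. For $k\ge2$, use $|\E X_i^k|\le L^{k-2}\E X_i^2\le L^{k-2}\sigma_i^2$. Then apply the elementary inequality $k!\ge 2\cdot3^{k-2}$, which is easily checked by induction from $k=2$ (for $k\ge3$ the factor $k\ge3$ is gained at each step). This yields
\begin{equation*}
\E\exp(\theta X_i)\le 1+\frac{\theta^2\sigma_i^2}{2}\sum_{k\ge2}\Bigl(\frac{L\theta}{3}\Bigr)^{k-2}=1+\frac{\theta^2\sigma_i^2}{2(1-L\theta/3)}.
\end{equation*}
The geometric series converges precisely because $\theta<3/L$.

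Finally, $1+x\le\ee^x$ gives the per-variable bound. The only point needing care is the moment comparison combined with $k!\ge2\cdot3^{k-2}$; everything else is routine. Since this is exactly the computation in the proof of \cite[Thm.~7.30]{FoRa13} leading to \cite[(7.42)]{FoRa13}, one may alternatively just cite it.
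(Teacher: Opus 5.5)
Your proof is correct and follows the same route as the paper. The paper gives no argument of its own but cites the proof of Theorem~7.30 in Foucart--Rauhut, which is exactly your computation: factorize over the independent summands, expand the exponential, use $|\E X_i^k|\le L^{k-2}\sigma_i^2$ together with $k!\ge 2\cdot 3^{k-2}$ (i.e., Bernstein's moment condition with $R=L/3$), sum the geometric series for $\theta<3/L$, and finish with $1+x\le \ee^x$.
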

		
		\noindent
		From Lemma~\ref{lemA:BernMGF}, we derive the following moment estimate.
		
		\begin{lemma}
			[Moment estimation]
			\label{lem:bernmom}
			Let $\{X_i\}_{i\le n}$ be independent real-valued centered random variables with $|X_i|\le L$ almost surely, for each index $i=1,\ldots,n$, and $\sum_{i=1}^n\E X_i^2\le v$, where $L,v>0$.
			Then, for every $p\ge1$ with $L\sqrt{p/v}\le\frac18$,
			\begin{equation*}
            %\label{eq:bernmom}
				\E\Bigl|\sum_{i=1}^nX_i\Bigr|^p\le 2\bigl({\ee}^{-11/23}\sqrt{pv}\bigr)^p .
			\end{equation*}
		\end{lemma}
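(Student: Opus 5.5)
The plan is a Chernoff-type argument: bound the moment generating function by Lemma~\ref{lemA:BernMGF}, then turn it into moment bounds. Set $S\coloneqq\sum_{i=1}^nX_i$. Lemma~\ref{lemA:BernMGF} with $\sigma^2=v$ gives, for $0<\theta<3/L$,
\begin{equation*}
\E\ee^{\theta S}\le\exp\Bigl(\frac{\theta^2v}{2(1-L\theta/3)}\Bigr).
\end{equation*}
Applying the lemma also to $-X_i$ yields the same bound for $\E\ee^{-\theta S}$. Adding the two,
\begin{equation*}
\E\ee^{\theta|S|}\le\E\ee^{\theta S}+\E\ee^{-\theta S}\le2\exp\Bigl(\frac{\theta^2v}{2(1-L\theta/3)}\Bigr).
\end{equation*}
This factor $2$ is the prefactor $2$ in the claim.

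Next I convert this exponential moment into a $p$-th moment. For $t\ge0$ and $\theta>0$, maximizing $t^p\ee^{-\theta t}$ over $t$ gives
\begin{equation*}
t^p\le\Bigl(\frac{p}{\ee\theta}\Bigr)^p\ee^{\theta t}.
\end{equation*}
Taking $t=|S|$ and expectations,
\begin{equation*}
\E|S|^p\le2\Bigl(\frac{p}{\ee\theta}\Bigr)^p\exp\Bigl(\frac{\theta^2v}{2(1-L\theta/3)}\Bigr).
\end{equation*}
The natural choice is $\theta\coloneqq\sqrt{p/v}$, which makes $\theta^2v=p$. The constraint $\theta<3/L$ holds because $L\theta=L\sqrt{p/v}\le\tfrac18<3$. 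With this choice, $L\theta/3\le1/24$, so
\begin{equation*}
\frac{\theta^2v}{2(1-L\theta/3)}\le\frac{p}{2(1-1/24)}=\frac{12p}{23}.
\end{equation*}
Substituting, $(p/(\ee\theta))^p=(\sqrt{pv}/\ee)^p$, and therefore
\begin{equation*}
\E|S|^p\le2\bigl(\sqrt{pv}\bigr)^p\ee^{-p}\ee^{12p/23}=2\bigl(\ee^{-11/23}\sqrt{pv}\bigr)^p,
\end{equation*}
which is exactly the claim.

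The main obstacle is only matching the constant $\ee^{-11/23}$. The computation above shows that it comes precisely from $1-1/24=23/24$, that is, from the hypothesis $L\sqrt{p/v}\le1/8$. So taking $\theta=\sqrt{p/v}$ rather than optimizing $\theta$ exactly is what the authors intended.
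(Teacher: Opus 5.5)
Your proof is correct and matches the paper's argument step by step: you use the two-sided MGF bound from Lemma~\ref{lemA:BernMGF}, the elementary inequality $t^p\le(p/(\ee\theta))^p\ee^{\theta t}$, and the choice $\theta=\sqrt{p/v}$, which gives the exponent $12p/23$. The only detail the paper handles more carefully is that Lemma~\ref{lemA:BernMGF} requires constants $\sigma_i>0$; the paper takes $\sigma_i^2=\E X_i^2+\eta/n$ and lets $\eta\downarrow0$, whereas you simply set $\sigma^2=v$, which is a harmless technicality.
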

		\begin{proof}
			Let $Z\coloneqq\sum_{i=1}^nX_i$ and $0<\theta<3/L$.
			Lemma~\ref{lemA:BernMGF}, applied to $\{X_i\}_{i\le n}$ and $\{-X_i\}_{i\le n}$ with $\sigma_i^2\coloneqq\E X_i^2+\eta/n$, and $\eta\downarrow0$, gives $\E \exp(\pm\theta Z)\le\exp\bigl(\frac{\theta^2v}{2(1-L\theta/3)}\bigr)$.
			Since $\max_{t\ge0}t^p\ee^{-\theta t}=(p/({\ee}\theta))^p$, we obtain
			\begin{equation*}
				\E|Z|^p\le\Bigl(\frac{p}{\ee\theta}\Bigr)^p \E{\ee}^{\theta|Z|}\le\Bigl(\frac{p}{\ee\theta}\Bigr)^p\bigl(\E{\ee}^{\theta Z}+\E{\ee}^{-\theta Z}\bigr)\le 2\Bigl(\frac{p}{\ee\theta}\Bigr)^p\exp\Bigl(\frac{\theta^2v}{2(1-L\theta/3)}\Bigr).
			\end{equation*}
			We choose $\theta\coloneqq\sqrt{p/v}$, for which $L\theta/3\le\frac1{24}$ by assumption, so that the exponent is at most $\frac{12p}{23}$.
			This gives $\E|Z|^p\le2(\sqrt{pv}/{\ee})^p{\ee}^{12p/23}=2({\ee}^{-11/23}\sqrt{pv})^p$.
		\end{proof}
		
		\noindent
		The final theorem deals with infinite-dimensional matrices and was proved by Moeller and Ullrich, see~\cite[Prop.~3.8]{MoUl20}. Recall our convention that elements of $\ell_2(\N)$ are treated as column vectors.
		
		\begin{theorem}[Infinite matrix Chernoff inequality]
			\label{thm:MoUl20}
			Let $\mathbf{u}_i$, $i=1,\ldots,n$, be independent identically distributed random sequences from \(\ell_2(\N)\).
			Let further $n \geq 3$, $s > 1$, $L>0$ such that \(\|\mathbf{u}_i\|_{2} \leq L\) almost surely and \(\E(\mathbf{u}_i \mathbf{u}_i^\ast)  =  \mathbf{\Lambda}\)  for all \(i=1,\ldots,n\).
			Then, it holds
			\begin{align*}
				%\label{quant:F}
				\Prob \Big( \Big\| \frac{1}{n} \sum_{i=1}^n   \mathbf{u}_i  \mathbf{u}_i^\ast - \mathbf{\Lambda} \Big\|_{2 \to 2} \geq F \Big) \leq 2^\frac{3}{4}  n^{1-s} ,
			\end{align*}
			where 
			\[F \coloneqq  \max\biggl\{ \frac{8s \log n}{n} L^2 \bigl(\tfrac{1+\sqrt{5}}{2}\bigr)^2 , \| \mathbf{\Lambda} \|_{2\to2}\biggr\}.
			\]
		\end{theorem}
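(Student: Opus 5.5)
We plan to reduce to a finite-dimensional matrix concentration inequality whose dimensional factor depends only on the number of summands $n$ and not on the ambient dimension. After that, we insert the threshold $F$ and solve a quadratic inequality; this is where the golden ratio will appear.

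\emph{Step 1 (truncation).} Let $\mathbf{P}_N$ be the orthogonal projection onto the first $N$ coordinates of $\ell_2(\N)$. We would prove the estimate for the $N\times N$ matrices $\mathbf{P}_N\bigl(\frac1n\sum_i\mathbf{u}_i\mathbf{u}_i^\ast-\mathbf{\Lambda}\bigr)\mathbf{P}_N$, with all constants uniform in $N$. Since $\|\mathbf{u}_i\|_2\le L$, the operators $\mathbf{P}_N\mathbf{A}\mathbf{P}_N$ converge to $\mathbf{A}$ in operator norm. The operator $\frac1n\sum_i\mathbf{u}_i\mathbf{u}_i^\ast$ has finite rank, and $\mathbf{\Lambda}=\E\mathbf{u}_1\mathbf{u}_1^\ast$ is trace class with trace at most $L^2$, hence compact. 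The events $\{\|\cdot\|_{2\to2}\ge F\}$ are then handled by passing to the limit along strict inequalities, i.e., using $\liminf$ of the probabilities of $\{\|\mathbf{P}_N\cdot\mathbf{P}_N\|>F-\eta\}$ and then letting $\eta\downarrow0$. We also need $\mathbf{\Lambda}\preceq L^2\mathbf{I}$ and $\|\mathbf{P}_N\mathbf{\Lambda}\mathbf{P}_N\|\le\|\mathbf{\Lambda}\|$, both of which are immediate.

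\emph{Step 2 (dimension-free Bernstein for rank-one sums).} For the finite matrices $\mathbf{X}_i\coloneqq\frac1n(\mathbf{P}_N\mathbf{u}_i\mathbf{u}_i^\ast\mathbf{P}_N-\mathbf{P}_N\mathbf{\Lambda}\mathbf{P}_N)$ we have $\|\mathbf{X}_i\|\le L^2/n$ and $\sum_i\E\mathbf{X}_i^2\preceq\frac{L^2}{n}\mathbf{\Lambda}$, so the variance parameter is at most $L^2\|\mathbf{\Lambda}\|/n$. A plain matrix Bernstein bound would carry the factor $N$, which is useless. Instead we would use Oliveira's version for sums of independent rank-one terms, which is proved via symmetrization and a Rudelson-type estimate, or alternatively a Tropp-type bound with intrinsic dimension. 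In either case the dimension is replaced by a quantity of order $n$, since every realization lives in the span of $n$ vectors; with the intrinsic-dimension route, $\operatorname{tr}\mathbf{\Lambda}/\|\mathbf{\Lambda}\|$ also enters. The target is a tail bound of the form
\begin{equation*}
\Prob\Bigl(\Bigl\|\tfrac1n\textstyle\sum_i\mathbf{u}_i\mathbf{u}_i^\ast-\mathbf{\Lambda}\Bigr\|\ge t\Bigr)\le c\,n^{\kappa}\exp\Bigl(-\frac{n t^2}{8L^2(\|\mathbf{\Lambda}\|+t)}\Bigr).
\end{equation*}
The precise prefactor $2^{3/4}$ and the power of $n$ will be fixed by how the moment-to-tail step is carried out, for instance via Proposition~\ref{propA:MomTail}.

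\emph{Step 3 (choice of $t$).} Set $t=F\ge\|\mathbf{\Lambda}\|$. We then need the exponent to be at least $s\log n$ plus whatever is required to absorb the polynomial prefactor, so that the right-hand side becomes $2^{3/4}n^{1-s}$. Writing $a\coloneqq 8sL^2\log n/n$, this amounts to $t^2\ge a(t+\|\mathbf{\Lambda}\|)$, and it suffices to have $t^2\ge a t+a\,t$ when $t\ge\|\mathbf{\Lambda}\|$. Sharper bookkeeping gives the quadratic $t^2-at-a^2\ge0$, whose positive root is $\phi a$ with $\phi=(1+\sqrt5)/2$. This explains the factor $\phi^2$ in $F$ once the terms are balanced as $t=\max\{\phi^2 a,\|\mathbf{\Lambda}\|\}$.

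\emph{Main obstacle.} We expect the main difficulty to be Step 2: obtaining a prefactor that depends only on $n$, is independent of $N$, and has exactly the constants stated. The first thing we would try is the moment method of Rudelson and Oliveira, that is, symmetrization (Theorem~\ref{thmA:Symm}) followed by the noncommutative Khintchine inequality applied to $\sum_i\epsilon_i\mathbf{u}_i\mathbf{u}_i^\ast$ in Schatten-$p$ norm with $p\asymp\log n$. The Schatten norm is controlled by the rank $n$ rather than by $N$, and Proposition~\ref{propA:MomTail} then converts the moment bound into the tail estimate.
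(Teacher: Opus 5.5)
The paper does not prove this statement itself but quotes \cite[Prop.~3.8]{MoUl20}. The route that yields exactly the stated constants is the Rudelson moment method, which you list only as a fallback in your last paragraph.

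As written, your proposal has a genuine gap. The theorem is precisely a statement about the constants $2^{3/4}$, $n^{1-s}$ and $8\phi^2$, yet Step~2 is only a target whose prefactor and exponent you explicitly leave open. The two concrete tools you name do not deliver it:
\begin{itemize}
\item Oliveira's rank-one inequality has a prefactor quadratic in $\min\{N,n\}$, so at $t=F$ it does not reproduce $2^{3/4}n^{1-s}$.
\item The intrinsic-dimension bound has a prefactor of order $\tr\mathbf{\Lambda}/\|\mathbf{\Lambda}\|_{2\to2}$. Contrary to your claim, this is not of order $n$ and can be arbitrarily large compared with $n$.
\end{itemize}
Step~3 also misplaces the golden ratio. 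With a tail of your Bernstein form, $t\ge\max\{2a,\|\mathbf{\Lambda}\|_{2\to2}\}$ already makes the exponent at least $s\log n$, so $\phi$ plays no role there. Moreover, your quadratic $t^2-at-a^2$ has root $\phi a$, not the $\phi^2a$ you then insert.

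The missing idea is that the moment method produces a \emph{self-bounding} moment inequality rather than a Bernstein tail. Let $Z$ denote the norm in question and $E_p\coloneqq(\E Z^p)^{1/p}$. First symmetrize; your truncation is a convenient way to justify this from Theorem~\ref{thmA:Symm}. Then apply Rudelson's lemma, i.e.\ noncommutative Khintchine in the Schatten-$p$ norm combined with $\rank(\sum_i\epsilon_i\mathbf{u}_i\mathbf{u}_i^\ast)\le n$. For fixed $\mathbf{u}_i$ and $p\ge2$ it gives
\begin{equation*}
\Bigl(\E_{\boldsymbol\epsilon}\Bigl\|\sum_i\epsilon_i\mathbf{u}_i\mathbf{u}_i^\ast\Bigr\|_{2\to2}^p\Bigr)^{1/p}\le2^{3/(4p)}n^{1/p}\sqrt{p/\ee}\,L\,\Bigl\|\sum_i\mathbf{u}_i\mathbf{u}_i^\ast\Bigr\|_{2\to2}^{1/2}.
\end{equation*}
Since $\|\frac1n\sum_i\mathbf{u}_i\mathbf{u}_i^\ast\|_{2\to2}\le Z+\|\mathbf{\Lambda}\|_{2\to2}$, Hölder's and Minkowski's inequalities yield
\begin{equation*}
E_p\le c_p\sqrt{E_p+\|\mathbf{\Lambda}\|_{2\to2}},\qquad c_p\coloneqq2^{1+3/(4p)}n^{1/p}L\sqrt{\frac{p}{\ee n}}.
\end{equation*}
Solving this quadratic inequality gives $E_p\le\phi\,c_p\max\{c_p,\|\mathbf{\Lambda}\|_{2\to2}^{1/2}\}$, and this is where $\phi$ enters. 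Now take $p\coloneqq2s\log n$, which exceeds $2$ exactly because $n\ge3$. One checks $c_p^2\le F$, and Markov's inequality gives
\begin{equation*}
\Prob(Z\ge F)\le\Bigl(\frac{\phi^2c_p^2}{F}\Bigr)^{p/2}=2^{3/4}n\Bigl(\frac{8\phi^2sL^2\log n}{\ee nF}\Bigr)^{p/2}\le2^{3/4}n^{1-s}.
\end{equation*}
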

		
	\end{appendix}
\end{document}